\documentclass{amsart}
\usepackage{amssymb,latexsym}
\usepackage{amscd,amsthm}

\usepackage{tikz-cd}

\usepackage[all]{xy}

\newtheorem{theorem}{Theorem}[section]
\newtheorem{lemma}[theorem]{Lemma}

\newtheorem{proposition}[theorem]{Proposition}
\newtheorem{corollary}[theorem]{Corollary}

\theoremstyle{definition}

\newtheorem*{remark}{Remark}

\newenvironment{theorembis}[1]
  {%
   \addtocounter{theorem}{-1}%
   \begin{theorem}}
  {\end{theorem}}

\DeclareMathOperator{\Ext}{Ext}

\DeclareMathOperator{\Hom}{Hom}

\DeclareMathOperator{\cok}{cok}

%%%%%%%%%%%%%%%%%%%%%%%%%%%%%%%%%% Category notation %%%%%%%%%%%%%%%%%%%%%%%%%%%%%%%%%%%%%%
\newcommand{\cat}[1]{\mathcal{#1}}           %% font for categories

\newcommand{\tensor}{\otimes}

\newcommand{\class}[1]{\mathcal{#1}}   %% font for classes

\newcommand{\Z}{\mathbb{Z}}

\newcommand{\mathcolon}{\colon\,} %% Hovey uses for maps, like f: A -> B

\newcommand{\ch}{\textnormal{Ch}(R)}
\newcommand{\cha}[1]{\textnormal{Ch}(\mathcal{#1})}

\newcommand{\qcox}{\textnormal{Qco}(\mathbb{X})}
\newcommand{\chqcox}{\textnormal{Ch}(\mathbb{X})}

\newcommand{\sheafhom}{\mathcal{H}\mspace{-2mu}\mathit{o}\mathit{m}\mspace{1mu}}  %% for sheaf of restricted Hom sets

\newcommand{\tilclass}[1]{\widetilde{\class{#1}}}
\newcommand{\dgclass}[1]{dg\widetilde{\class{#1}}}

\newcommand{\dwclass}[1]{dw\widetilde{\class{#1}}}
\newcommand{\exclass}[1]{ex\widetilde{\class{#1}}}
\newcommand{\toclass}[1]{to\widetilde{\class{#1}}}

\newcommand{\rightperp}[1]{#1^{\perp}}
\newcommand{\leftperp}[1]{{}^\perp #1}

\newcommand{\homcomplex}{\mathit{Hom}}

\begin{document}

\title[Quillen equivalences inducing Grothendieck]{Quillen equivalences inducing Grothendieck duality for unbounded chain complexes of sheaves}

\author{Sergio Estrada}
 \address{S.E. \ Universidad de
  Murcia\\ Facultad de Matemáticas \\ Campus de Espinardo \\ Murcia 30100, Spain} \email[Sergio Estrada]{sestrada@um.es}
\urladdr{https://webs.um.es/sestrada/}
\author{James Gillespie}
\address{J.G. \ Ramapo College of New Jersey \\
         School of Theoretical and Applied Science \\
         505 Ramapo Valley Road \\
         Mahwah, NJ 07430\\ U.S.A.}
\email[Jim Gillespie]{jgillesp@ramapo.edu}
\urladdr{http://pages.ramapo.edu/~jgillesp/}
\thanks{S.E.\ was supported by grant PID2020-113206GB-I00/AEI/10.13039/501100011033.}

\date{\today}

\keywords{Quillen equivalence; model category;  Grothendieck duality; dualizing complex; Noetherian scheme; stable derived category; totally acyclic complex; Gorenstein injective; Gorenstein flat; Tate cohomology}

\subjclass[2020]{18N40, 14F08, 13D09}

\begin{abstract}%% what do you actually prove?%%
Let $\mathbb{X}$ be a semiseparated Noetherian scheme with a dualizing complex $D$. We lift some well-known triangulated equivalences associated with Grothendieck duality to Quillen equivalences of model categories. In the process we are able to show that the Gorenstein flat model structure, on the category of quasi-coherent sheaves on $\mathbb{X}$, is Quillen equivalent to the Gorenstein injective model structure.
Also noteworthy is that we extend the recollement of Krause to hold without the Noetherian condition. Using a set of flat generators, it holds for any quasi-compact semiseparated scheme $\mathbb{X}$. With this we also show that the Gorenstein injective quasi-coherent sheaves are the fibrant objects of a cofibrantly generated abelian model structure for any semiseparated Noetherian scheme $\mathbb{X}$. Finally, we consider both the injective and (mock) projective approach to Tate cohomology of quasi-coherent sheaves. They agree whenever $\mathbb{X}$ is a semiseparated Gorenstein scheme of finite Krull dimension.
\end{abstract}

\maketitle

\section{Introduction}
%Throughout, $\mathbb{X}$ denotes, at the very least, a quasi-compact semiseparated scheme. Most often we will qualify it to a Noetherian scheme with a dualizing complex $D$. We denote the category of all (quasi-coherent) sheaves on $\mathbb{X}$ by $\qcox$ and, its chain complex category by $\chqcox$. In fact, all sheaves in the article will be quasi-coherent and we won't explicitly say ``quasi-coherent'' anymore.

The central theme of this article is lifting some well-known triangulated equivalences to Quillen equivalences.
From a homotopy theoretic standpoint it is desirable to have a stable model structure on the  ground category, $\qcox$ or $\chqcox$, which possesses all small limits and colimits. The Quillen model structures and equivalences we construct are ultimately an expression of Grothendieck duality.
Theorem~\ref{them-Quillen-Krause-Iyengar-Murfet} is the Quillen equivalence version of Murfet's main theorem~\cite[Theorem~8.4]{murfet-thesis} from his thesis. Hence the title of this paper. It might go without saying, and it has been explained well by Murfet in his thesis and by other authors, but the extension of Grothendieck duality is the culmination of many interesting articles. Most notably,  \cite{krause-stable derived cat of a Noetherian scheme,jorgensen-homotopy-projectives, iyengar-krause,neeman-flat, murfet-thesis, murfet-salarian}.
The current work depends on these articles, but we also contribute several results of interest that go beyond the construction of model structures and Quillen equivalences.
We explain each below, but first we wish to point out the impetus for all of this.

We recall that, for modules over a ring $R$, a cotorsion module $C$ is one such that $\Ext^1_R(F,C) = 0$ for every flat module $F$.
An important recent result is that for chain complexes of modules over $R$, every chain map $f : F \xrightarrow{} C$ is null homotopic whenever $F$ is an exact complex with flat cycles and $C$ is any complex of cotorsion $R$-modules. This appeared in~\cite[Theorem~5.3]{bazzoni-cortes-estrada}. A precursor to this was the surprising (though weaker) fact proved by {\v{S}}\v{t}ov{\'{\i}}{\v{c}}ek that every exact complex of injective $R$-modules has cotorsion cycles~\cite{stovicek-purity}.  We can ask the same questions for (quasi-coherent) sheaves on $\mathbb{X}$. The following result gives an affirmative answer to this and allows us to achieve the most interesting results in the present article.

\begin{theorem}\cite[Theorem~3.3]{cet-G-flat-stable-scheme}\label{them-cotorsion-complexes-scheme}
 Assume $\mathbb{X}$ is a semiseparated and quasi-compact scheme. Let $C$ be a chain complex of cotorsion (quasi-coherent) sheaves. Then every chain complex $f : F \xrightarrow{} C$ from  an exact complex  $F$ with flat cycles is null-homotopic. And, if $C$ is exact then it has cotorsion cycles.
\end{theorem}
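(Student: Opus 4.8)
It would be most natural to prove this by showing that $\Ext^1_{\chqcox}(G,C)=0$ whenever $G$ is an exact complex of quasi-coherent sheaves with flat cycles --- write $\tilclass{F}$ for the class of such complexes --- and $C$ is a complex of cotorsion sheaves. Indeed, for any such $G$ the well-known short exact sequence $0\to G\to\mathbf{D}(G)\to\Sigma G\to 0$, in which $\mathbf{D}(G)$ is a contractible complex assembled from disk complexes (contractible two-term complexes) on the terms of $G$, shows that a chain map $f\colon G\to C$ is null-homotopic exactly when it factors through the inflation $G\hookrightarrow\mathbf{D}(G)$, equivalently when it vanishes under the connecting homomorphism $\Hom_{\chqcox}(G,C)\to\Ext^1_{\chqcox}(\Sigma G,C)$. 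Since $\Sigma G$ again lies in $\tilclass{F}$, the stated null-homotopy assertion is in fact \emph{equivalent} to the vanishing of $\Ext^1_{\chqcox}(-,C)$ on all of $\tilclass{F}$ (for the converse, any extension of some $G\in\tilclass{F}$ by a complex of cotorsion sheaves is automatically degreewise split, because $G_n$ is flat, being an extension of flat cycles, and $C_n$ cotorsion; so the extension is classified by a homotopy class of chain maps $G\to\Sigma C$). Granting this $\Ext^1$-vanishing, the second assertion follows either from the ``cotorsion cycles'' half of \cite[Theorem~5.3]{bazzoni-cortes-estrada} together with the fact --- which I would also need to establish --- that cotorsion-ness of a quasi-coherent sheaf can be tested on a semiseparating affine cover, or by rerunning the argument below with an exact $C$.

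To establish the $\Ext^1$-vanishing I would follow the strategy of \cite{bazzoni-cortes-estrada}, which in turn rests on {\v{S}}\v{t}ov{\'{\i}}{\v{c}}ek's purity results \cite{stovicek-purity}. First, one shows that $\tilclass{F}$ is deconstructible in $\chqcox$: every member of $\tilclass{F}$ is a transfinite extension of a \emph{set} $\mathcal{S}$ of ``small'' complexes lying in $\tilclass{F}$. This uses that $\mathbb{X}$, being quasi-compact and semiseparated, carries a set of flat generators for $\qcox$, which bounds the presentability rank of the small pieces once one also arranges their cycle sheaves to be controlled. By Eklof's lemma this reduces the problem to showing $\Ext^1_{\chqcox}(S,C)=0$ for $S\in\mathcal{S}$ and $C$ a complex of cotorsion sheaves. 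Second, for such small $S$ one runs the purity argument: one exploits the purity (local splitness) of suitable subcomplexes of $S$ and the fact that the cotorsion of $C$ annihilates the obstructing $\Ext^1$ along the resulting filtration, exactly as over a ring. To license the genuinely module-theoretic inputs of \cite{bazzoni-cortes-estrada} --- which exploit the description of flat modules as filtered colimits of finite free ones --- one passes to a finite semiseparating affine open cover $\{U_1,\dots,U_d\}$ of $\mathbb{X}$ (all finite intersections affine, all the inclusions affine morphisms), over which flatness, exactness and the pertinent purity are detected.

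The main obstacle is this second step: transporting {\v{S}}\v{t}ov{\'{\i}}{\v{c}}ek's purity machinery into $\qcox$. Flat quasi-coherent sheaves do not enjoy the clean ``filtered colimit of finite free'' description available over a ring, so the pure-exactness and Mittag--Leffler arguments underpinning the module proof must be re-sourced. I expect the workable route is to push the reduction to the affine cover far enough that the small case becomes a statement about $\mathcal{O}(U_I)$-modules, the key lemma being that a quasi-coherent sheaf is cotorsion if and only if each of its restrictions to the members of a semiseparating affine cover is a cotorsion module. This is precisely where semiseparatedness is indispensable: it forces the cover inclusions to be affine morphisms, hence with exact pushforwards that preserve injectives, which is what makes the comparison of global and local $\Ext$ groups go through. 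Once that lemma is in hand, both halves of the theorem reduce to \cite[Theorem~5.3]{bazzoni-cortes-estrada} applied over the pieces $U_I$, together with the bookkeeping of the {\v C}ech complex of the finite cover.
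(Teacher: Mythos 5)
The paper does not prove this statement; it is quoted verbatim from \cite[Theorem~3.3]{cet-G-flat-stable-scheme} and used throughout as a black box. So there is no ``paper's own proof'' to compare against, and your proposal must be judged on its own merits. Your opening reduction is correct and clean: using the short exact sequence $0 \to G \to \mathrm{Cone}(\mathrm{id}_G) \to \Sigma G \to 0$, null-homotopy of every map $G \to C$ with $G \in \tilclass{F}$ is indeed equivalent to $\Ext^1_{\chqcox}(\tilclass{F}, C) = 0$, and your converse argument (degreewise splitness because $G_n$ is flat and $C_n$ cotorsion, then classify by $\Ext^1_{dw}(G,C) \cong K(\mathbb{X})(G,\Sigma C)$) is right.

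The genuine gap is the ``key lemma'' you flag: that a quasi-coherent sheaf $C$ is cotorsion if and only if each restriction $j_i^*C$ to a semiseparating affine cover is a cotorsion $\mathcal{O}(U_i)$-module. The justification you offer --- that semiseparatedness makes the inclusions $j_i$ affine morphisms, so the pushforwards $j_{i*}$ are exact and preserve injectives, making ``the comparison of global and local $\Ext$ groups go through'' --- does not actually establish either implication. For the direction your \v{C}ech strategy needs ($C$ cotorsion on $\mathbb{X}$ implies $j_i^*C$ cotorsion on $U_i$), the adjunction you invoke yields $\Ext^1_{U_i}(j_i^*F, j_i^*C) \cong \Ext^1_{\mathbb{X}}(F, j_{i*}j_i^*C)$ for $F$ flat on $\mathbb{X}$, which would conclude only if $j_{i*}j_i^*C$ were known cotorsion; that does not follow from $C$ cotorsion, because the unit $C \to j_{i*}j_i^*C$ is not split. (What \emph{does} follow cheaply from the adjunction, using that $j_i^*$ preserves flats, is the reverse: $j_{i*}$ carries cotorsion to cotorsion.) For the other direction (each $j_i^*C$ cotorsion implies $C$ cotorsion), the \v{C}ech resolution is a finite \emph{co}resolution $0 \to C \to \bigoplus j_{i*}j_i^*C \to \cdots$, and dimension-shifting against a flat $F$ then controls $\Ext^{>m}_{\mathbb{X}}(F,C)$, not $\Ext^1_{\mathbb{X}}(F,C)$. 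In short, cotorsionness is not a Zariski-local condition in any obvious way, and you cannot lean on it to reduce the small case to \cite[Theorem~5.3]{bazzoni-cortes-estrada} over the $U_I$. Your treatment of the second assertion (cotorsion cycles when $C$ is exact) inherits the same problem, and the alternative you suggest --- ``rerunning the argument below with an exact $C$'' --- is not elaborated; it is not a formal consequence of the null-homotopy statement because the sphere complexes $S^n(F)$ with $F$ flat are not in $\tilclass{F}$, so one cannot directly deduce that the surjections $C_{n+1} \twoheadrightarrow Z_nC$ have the lifting property against flats. A correct proof needs a different mechanism for getting from the known ring case to $\qcox$ (for instance, exploiting the flat generators of $\qcox$ together with the flat-cotorsion approximation theory and purity results of \cite{stovicek-purity} directly in the sheaf category, rather than attempting an affine-local reduction of cotorsionness).
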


We will refer to this result often throughout the present article.
In the notation of the second author, from~\cite{gillespie,gillespie-degreewise-model-strucs}, it means $\dgclass{C} = \dwclass{C}$, and $\tilclass{C} = \exclass{C}$. So we will make these substitutions freely throughout the paper. The first noteworthy comment is that this simplifies the descriptions of the homotopy categories studied by Murfet. For examples, see Theorem~\ref{them-contra-model-scheme} and the diagram in Proposition~\ref{prop-units-counits-weak-equivalences}.

Now let us explain in different sections below what these realizations help us to achieve in the present article. It will help the reader to keep in mind that the scheme $\mathbb{X}$ has fewer hypotheses in the beginning of the paper, and the stronger hypotheses (Noetherian, dualizing complex, Gorenstein) are added as the paper progresses.

%It will help the reader to keep in mind that the paper in a way that the hypotheses on the scheme $\mathbb{X}$ get more specific as the paper progresses. We start with the general hypothesis that $\mathbb{X}$ is a quasi-compact and semiseparated scheme in Section~\ref{sec-recollement}.  In Sections~\ref{sec-totally-acyclic-inj} and Section~\ref{sec-F-totally} we the Noetherian hypothesis, and in Sections~\ref{sec-Quillen-dualizing} and~\ref{sec-G-equivalence} we assume the existence of a dualizing complex. At the very end of the paper we end on consider Tate cohomology of sheaves on a Gorenstein scheme of finite Krull dimension.

\subsection{Krause's stable derived category and recollement}
Now we discuss the results of Section~\ref{sec-recollement}.
Becker showed in his article~\cite{becker}  that Krause's recollement $S(Inj) \xrightarrow{} K(Inj) \xrightarrow{} \class{D}(R)$, from~\cite{krause-stable derived cat of a Noetherian scheme}, holds for any ring $R$, even without the Noetherian hypothesis. Here $K(Inj)$ is the homotopy category of all chain complexes of injectives and $S(Inj)$ is Krause's \emph{stable derived category}. It is the full subcategory of $K(Inj)$ consisting of all exact (acyclic) complexes of injectives.  Krause's original work in~\cite{krause-stable derived cat of a Noetherian scheme} was in the setting of separated Noetherian schemes and locally noetherian categories. In Neeman's paper~\cite{neeman-homotopy category of injectives}, the question is raised as to when the recollement holds.
In that article, he also studies the homotopy category of injectives and  produces an explicit example of a locally noetherian Grothendieck category $\cat{G}$ in which products of exact complexes of injectives need not be exact. It implies the failure of recollement, and by~\cite{krause-stable derived cat of a Noetherian scheme}, $\class{D}(\class{G})$ is not compactly generated.

In general, the recollement appears to be connected to the existence of a nice set of generators.  For example, $R$ is a projective generator for $R$-Mod, which helps Becker get his result in~\cite{becker}. It is shown in~\cite{gillespie-models-for-hocats-of-injectives} that the recollement holds when $\cat{G}$ comes with a set of generators of finite projective dimension.
But using Theorem~\ref{them-cotorsion-complexes-scheme} we can now use flat generators for $\qcox$ to show that the recollement holds for any semiseparated and quasi-compact scheme $\mathbb{X}$.  See Corollary~\ref{cor-recollement of krause}. To do this, we first construct, see Theorem~\ref{them-exact Inj model structure}, an injective model structure for Krause's stable derived category.
 Another point of interest here is that any quasi-compact and \emph{quasi-separated} scheme possessing a generating set of flat (quasi-coherent) sheaves is necessarily  semiseparated, by~\cite{slavick-stovicek-flat-generators}. So with our approach of using flat generators, this is the best result we can expect. After all, if one expects $S(Inj)$ to be the homotopy category of some abelian model structure then it is necessary for there to be some set of generators in the left side of the cotorsion pair.

\subsection{Totally acyclic complexes of injectives and Gorenstein injectives}
Let us point out here the highlights of Section~\ref{sec-totally-acyclic-inj}. We now assume $\mathbb{X}$ is a semiseparated Noetherian scheme.
Building on the results of Section~\ref{sec-recollement},  we are able to construct in Theorem~\ref{them-totally Inj model structure} a cofibrantly generated model structure on $\chqcox$ whose homotopy category recovers the full subcategory of $S(Inj)$ consisting of the totally acyclic complexes of injective (quasi-coherent) sheaves.  Moreover, we see in Theorem~\ref{them-G-inj} that the Gorenstein injectives are the fibrant objects of a cofibrantly generated model structure on $\qcox$, and that it is Quillen equivalent to the one on $\chqcox$. The root of these ideas also goes back to~\cite[Theorem~7.12]{krause-stable derived cat of a Noetherian scheme}. We are now able to show that the homotopy categories expand to include all objects of $\qcox$ and $\chqcox$, these models are cofibrantly generated, and lastly they are connected by a Quillen equivalence.
Keeping with the theme of this paper, we note that this is a model category extension of Krause's result~\cite[Proposition~7.2]{krause-stable derived cat of a Noetherian scheme}.

Our proof methods in this section trace back to~\cite{bravo-gillespie-hovey}. That approach was generalized to noetherian categories in~\cite{gillespie-models-for-hocats-of-injectives}, but again it was assumed there that the category possessed a set of generators of finite projective dimension. These may now be replaced by a set of flat generators. So again, it is $\dgclass{C} = \dwclass{C}$ (Theorem~\ref{them-cotorsion-complexes-scheme}) that makes our proofs of Theorems~\ref{them-totally Inj model structure} and Theorem~\ref{them-G-inj} possible.

\subsection{F-totally acyclic complexes of flats and Gorenstein flats} In section~\ref{sec-F-totally} we look at the dual notion of the Gorenstein flat sheaves. Again, assuming $\mathbb{X}$ is a semiseparated Noetherian scheme, we show that the Gorenstein flat model structure on $\qcox$, from~\cite{cet-G-flat-stable-scheme}, is Quillen equivalent to the F-totally acyclic model structure on $\chqcox$, from~\cite{estrada-gillespie-coherent-schemes}. See Theorem~\ref{them-G-Quillen-equiv}. This solves an implicit question left open by~\cite[Corollary~4.6]{cet-G-flat-stable-scheme} where it was shown that there is indeed a triangulated equivalence between the two at the level of the homotopy category of cofibrant-fibrant subobjects of the two categories.
The authors are quite pleased with this result, as a proof avoiding projectives eluded us while writing~\cite{estrada-gillespie-coherent-schemes}. A careful look at the proof reveals that again $\dgclass{C} = \dwclass{C}$ (Theorem~\ref{them-cotorsion-complexes-scheme}) is a key ingredient to the proof of Theorem~\ref{them-G-Quillen-equiv}.

%4) Theorem 7.6: The arguments in (Proposition 7.2, Kra05) dualize to infer that $K_tac(Proj A)$ is triangle equivalent to St(GProj(A)), for any abelian category A. Thus, in case A=Mod(R), Theorem 7.6  can be seen as the non-affine "Quillen equivalence version" of it.

\subsection{Equivalence of the Gorenstein flat and Gorenstein injective model structures}
It is in Section~\ref{sec-Quillen-dualizing} that we work out the details showing that a dualizing complex $D$ induces a Quillen equivalence from the contraderived category of $\mathbb{X}$ to the coderived category of $\mathbb{X}$. Section~\ref{sec-G-equivalence} continues in this direction:  Murfet showed in his thesis that a complex $F$ of flat sheaves is F-totally acyclic if and only if $D \tensor F$ is a totally acyclic complex of injective sheaves. This allows us to show that the Gorenstein flat model structure is Quillen equivalent to the Gorenstein injective model structure whenever $\mathbb{X}$ is a semiseparated Noetherian scheme with a dualizing complex $D$. See Theorem~\ref{them-Murfet-Salarian} and Theorem~\ref{them-G-flat-G-inj}.

Other authors have studied when the stable category of Gorenstein projectives is equivalent to the stable category of Gorenstein injectives.
For example, some Quillen equivalences in this direction were found in~\cite{DEH-stable-categories}.
Also, it is shown in~\cite{zheng-huang} that, for a Noetherian ring $R$ with a dualizing complex,  the stable category of Gorenstein projective $R$-modules is equivalent to the stable category of Gorenstein injective $R$-modules. Our work in Section~\ref{sec-G-equivalence} extends this to non-affine schemes.

\subsection{Projective and injective Tate cohomology for schemes}

In~\cite{gillespie-canonical resolutions} we find what essentially amounts to a theory of Tate cohomology that can be applied to any hereditary abelian model structure. So with the model category approach taken in this article, it is natural for us to consider what happens when we apply this theory to the Gorenstein flat and Gorenstein injective model structures on $\qcox$. This topic is taken up in the final section of the paper.
We recover Krause's definition of injective Tate cohomology from~\cite{krause-stable derived cat of a Noetherian scheme}.  Using the Gorenstein flat model structure we define (mock) projective Tate cohomology, which is closely related to the one defined in~\cite{asad-salarian}. The main properties of our projective Tate cohomology are listed in Proposition~\ref{theorem-projective-Tate}. Finally, we show at the end of the paper, see Theorem~\ref{them-mock-tate-inj-tate}, that the two cohomology theories agree when $\mathbb{X}$ is a semiseparated Gorenstein scheme of finite Krull dimension.

\section{Preliminaries}

Throughout this paper $\mathbb{X}$ will always be, at the very least, a semiseparated and quasi-compact scheme. We will specify when we add hypotheses; for most of the paper $\mathbb{X}$ will be semiseparated Noetherian. The category of all quasi-coherent sheaves on $\mathbb{X}$ will be denoted $\qcox$. It is a Grothendieck abelian category.

\emph{Again, throughout the paper, all sheaves are assumed to be quasi-coherent over $\mathbb{X}$. In particular, unless stated otherwise, the word ``sheaf'' is to be interpreted as ``quasi-coherent sheaf''.}

For sheaves $F$ and $G$, we let $\Hom_{\mathbb{X}}(F,G)$ denote the categorical Hom, with values in abelian groups. Similarly, $\Ext^n_{\mathbb{X}}(F,G)$ will denote the Yoneda Ext groups. We will sometimes discuss the affine case in the context of a (commutative) ring with identity, $R$. We use the notation $\Hom_R(M,N)$, and $\Ext^n_R(M,N)$, etc.

\subsection{Chain complexes of (quasi-coherent) sheaves}\label{section-homcomplex-schemes}
The category of all chain complexes of sheaves will be denoted $\chqcox$, and its chain homotopy category $K(\mathbb{X})$.
Our convention is that the differential lowers degree, so $$X \equiv \cdots
\xrightarrow{} X_{n+1} \xrightarrow{d_{n+1}} X_{n} \xrightarrow{d_n}
X_{n-1} \xrightarrow{} \cdots$$ is a chain complex.
The
\emph{suspension of $X$}, denoted $\Sigma X$, is the complex given by
$(\Sigma X)_{n} = X_{n-1}$ and $(d_{\Sigma X})_{n} = -d_{n}$.  The
complex $\Sigma (\Sigma X)$ is denoted $\Sigma^{2} X$ and inductively
we define $\Sigma^{n} X$ for all positive integers. We also set $\Sigma^0 X = X$ and define $\Sigma^{-1}$ by shifting indices in the other direction.
We call a chain complex $X$ \emph{exact} or \emph{acyclic} (we use these terms interchangeably) if $H_nX = 0$ for all $n$.

The $\chqcox$ analogs of the groups $\Hom_{\mathbb{X}}(F,G)$ and $\Ext^n_{\mathbb{X}}(F,G)$ will be denoted $\Hom_{\chqcox}(X,Y)$ and $\Ext_{\chqcox}^n(X,Y)$, or even simply by $\Hom(X,Y)$ and $\Ext^n(X,Y)$. Again, the values of these functors are abelian groups. We also define, in the usual way for any complete abelian category, $\homcomplex_{\mathbb{X}}(X,Y)$ to
be the complex of abelian groups $$ \cdots \xrightarrow{} \prod_{k \in
\Z} \Hom_{\mathbb{X}}(X_{k},Y_{k+n}) \xrightarrow{\delta_{n}} \prod_{k \in \Z}
\Hom_{\mathbb{X}}(X_{k},Y_{k+n-1}) \xrightarrow{} \cdots ,$$ where $(\delta_{n}f)_{k}
= d_{k+n}f_{k} - (-1)^n f_{k-1}d_{k}$.
This gives a left exact functor
$$\homcomplex_{\mathbb{X}}(X,-) \mathcolon \chqcox \xrightarrow{} \textnormal{Ch}(\Z)$$
and similarly the contravariant functor $\homcomplex_{\mathbb{X}}(-,Y)$ sends right
exact sequences to left exact sequences. It will be important for us to note that it is exact when each $Y_{n}$ is an injective
sheaf. Also important to us is the fact the homology satisfies $$H_n[\homcomplex_{\mathbb{X}}(X,Y)] = K(\mathbb{X})(X,\Sigma^{-n} Y).$$ This can be verified immediately.

Finally, the Yoneda Ext functor $\Ext^1_{\chqcox}$ has a subfunctor which we will denote by $\Ext^1_{dw}$, consisting of all degreewise split short exact sequences. The following lemma gives a well-known connection, true for general abelian categories, between $\Ext^1_{dw}$ and the hom-complex $\homcomplex_{\mathbb{X}}$.
\begin{lemma}\label{lemma-homcomplex-basic-lemma}
For chain complexes $X$ and $Y$, we have isomorphisms:
$$\Ext^1_{dw}(X,\Sigma^{(-n-1)}Y) \cong H_n \homcomplex_{\mathbb{X}}(X,Y) =
K(\mathbb{X})(X,\Sigma^{-n} Y)$$ In particular, for chain complexes $X$ and $Y$, $\homcomplex_{\mathbb{X}}(X,Y)$ is
exact iff for any $n \in \Z$, any chain map $f \mathcolon \Sigma^nX \xrightarrow{} Y$ is
homotopic to 0 (or iff any chain map $f \mathcolon X \xrightarrow{} \Sigma^nY$ is homotopic
to 0).
\end{lemma}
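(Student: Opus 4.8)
The plan is to derive the first isomorphism from the standard Yoneda description of $\Ext^1$, restricted to the degreewise split subfunctor; the second identity $H_n\homcomplex_{\mathbb{X}}(X,Y) = K(\mathbb{X})(X,\Sigma^{-n}Y)$ was already recorded above (it is immediate from the definitions of $\homcomplex_{\mathbb{X}}$ and of a chain homotopy), and the ``in particular'' clause then follows at once from the displayed isomorphism, since $-n$ ranges over all of $\Z$ and $K(\mathbb{X})(\Sigma^nX,Y) \cong K(\mathbb{X})(X,\Sigma^{-n}Y)$.

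First I would make degreewise split extensions explicit. Given a degreewise split short exact sequence $0 \to A \to B \to X \to 0$ in $\chqcox$, a choice of degreewise splitting identifies $B_k \cong A_k \oplus X_k$ with $A \hookrightarrow B$ the inclusion, $B \twoheadrightarrow X$ the projection, and differential in block form
\[
d^B_k = \begin{pmatrix} d^A_k & s_k \\ 0 & d^X_k \end{pmatrix} \colon A_k \oplus X_k \longrightarrow A_{k-1} \oplus X_{k-1}
\]
for a uniquely determined family $s_k \colon X_k \to A_{k-1}$. A one-line computation gives $d^B \circ d^B = 0 \iff d^A_{k-1} s_k + s_{k-1} d^X_k = 0$ for all $k$; comparing with the formula for $\delta$, this says exactly that $s = (s_k)$ lies in $\ker\delta_{-1} \subseteq \homcomplex_{\mathbb{X}}(X,A)_{-1} = \prod_k \Hom_{\mathbb{X}}(X_k,A_{k-1})$. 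Next I would observe that an isomorphism of extensions $B \xrightarrow{\sim} B'$ restricting to the identity on $A$ and on $X$ must, in these coordinates, be the block matrix with identities on the diagonal and a single entry $t_k \colon X_k \to A_k$ off the diagonal, and that it is a chain map precisely when $s_k - s'_k = d^A_k t_k - t_{k-1} d^X_k$, i.e. $s - s' = \delta_0(t) \in \im\delta_0$. Thus assigning to a degreewise split extension the class of $s$ sets up a well-defined bijection $\Ext^1_{dw}(X,A) \cong \ker\delta_{-1}/\im\delta_0 = H_{-1}\homcomplex_{\mathbb{X}}(X,A)$; checking that it is additive for the Baer sum (a sum of degreewise split sequences is again degreewise split) and natural in $X$ and $A$ upgrades it to an isomorphism of abelian groups.

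Finally I would insert the suspension. Taking $A = \Sigma^{-n-1}Y$ we have $A_{k-1} = Y_{k+n}$, so $\homcomplex_{\mathbb{X}}(X,\Sigma^{-n-1}Y)_m = \homcomplex_{\mathbb{X}}(X,Y)_{m+n+1}$ as graded groups, and hence $H_{-1}\homcomplex_{\mathbb{X}}(X,\Sigma^{-n-1}Y) = H_n\homcomplex_{\mathbb{X}}(X,Y)$ (the sign twist on the differential coming from the suspension is irrelevant for homology); combined with the previous paragraph this yields $\Ext^1_{dw}(X,\Sigma^{-n-1}Y) \cong H_n\homcomplex_{\mathbb{X}}(X,Y)$. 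I do not anticipate a genuine obstacle here: the whole argument is the familiar identification of $\Ext^1$ with a cocycles-modulo-coboundaries group, and the only thing needing care is the bookkeeping --- matching the sign convention $(\delta_nf)_k = d_{k+n}f_k - (-1)^nf_{k-1}d_k$ with the signs produced by $d^B\circ d^B = 0$ and by the extension-equivalence condition, and keeping the degree shift $\Sigma^{-n-1}$ consistent throughout. (Equivalently, one could prove only the case $\Sigma^{-n-1}Y = Y$, that is $n=-1$, and then substitute $\Sigma^{-n-1}Y$ for $Y$, using $\homcomplex_{\mathbb{X}}(X,\Sigma^jY) \cong \Sigma^j\homcomplex_{\mathbb{X}}(X,Y)$.)
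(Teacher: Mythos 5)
Your proof is correct and complete. Note that the paper itself offers no argument for this lemma---it is introduced as ``a well-known connection, true for general abelian categories,'' and the identity $H_n[\homcomplex_{\mathbb{X}}(X,Y)] = K(\mathbb{X})(X,\Sigma^{-n}Y)$ is dismissed with ``This can be verified immediately''---so there is no authorial proof to compare against. Your argument is the standard one: encode a degreewise split extension by the off-diagonal block $s_k\colon X_k\to A_{k-1}$ of the differential, check that $d^B\circ d^B=0$ is precisely $\delta_{-1}(s)=0$ and that equivalence of extensions (and independence of the chosen splitting, which is the special case $B'=B$ of your equivalence computation) is precisely $s-s'\in\im\delta_0$, then shift using $\homcomplex_{\mathbb{X}}(X,\Sigma^jY)\cong\Sigma^j\homcomplex_{\mathbb{X}}(X,Y)$. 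The sign bookkeeping against the paper's convention $(\delta_nf)_k = d_{k+n}f_k - (-1)^nf_{k-1}d_k$ checks out at both $n=-1$ and $n=0$, the degree shift $-1\mapsto n$ under $m\mapsto m+n+1$ is right, and the deduction of the ``in particular'' clause from exactness of $\homcomplex_{\mathbb{X}}(X,Y)$ together with $K(\mathbb{X})(\Sigma^nX,Y)\cong K(\mathbb{X})(X,\Sigma^{-n}Y)$ is exactly what is needed.
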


\subsection{Closed symmetric monoidal structures}\label{section-csms}

The category $\qcox$ is a closed symmetric monoidal category with respect to the usual tensor product $F \otimes G$ of sheaves. The internal hom, which we will denote by $\sheafhom_{qc}(F,G)$, is more subtle; there is a brief reminder in~\cite[\S2.1]{murfet-salarian} and~\cite[beginning of \S6]{gillespie-quasi-coherent}. In a general way, the closed symmetric monoidal structure lifts to one on $\chqcox$; for example, see~\cite[Prop.~9.2.1]{hovey-axiomatic stable homotopy}. In particular, for any chain complex $X$ we have a functor $$- \tensor X : \chqcox \xrightarrow{} \chqcox$$
and it has a right adjoint that we again will denote by
$$\sheafhom_{qc}(X,-) : \chqcox \xrightarrow{} \chqcox.$$
We just recall that $X \tensor Y$ is defined so that in degree $n$ we have a coproduct
\begin{equation}\label{equation-tensor}
(X \tensor Y)_n = \bigoplus_{i+j=n} X_i
\tensor Y_j
\end{equation}
 and the differential uses the sign trick.
On the other hand, the closed structure is defined in degree $n$ by a product
\begin{equation}\label{equation-hom}
[\sheafhom_{qc}(X,Y)]_n = \prod^{qc}_{k \in \Z}
\sheafhom_{qc} (X_k,Y_{k+n})
\end{equation}
and again the differential involves a sign trick.
 Here we recall that products in $\qcox$ are also more subtle than one would like, so we are  using the notation  $\prod^{qc} F_i$ to denote the product of sheaves in $\qcox$.

\section{Model structures for the coderived and contraderived categories of a scheme}\label{sec-models}

This brief section is meant to familiarize the reader with some already known model structures for the coderived and contraderived categories. See \cite{positselski, becker, gillespie-mock projectives, gillespie-models-for-hocats-of-injectives, stovicek-purity, bravo-gillespie-hovey}. Throughout,  $\mathbb{X}$ denotes a semiseparated and quasi-compact scheme.  A key point is that, for the flat case, substituting $\dgclass{C} = \dwclass{C}$ (Theorem~\ref{them-cotorsion-complexes-scheme}) now gives us a much nicer description of the contraderived category. These model structures will be the backbone to our work in Sections~\ref{sec-Quillen-dualizing} and~\ref{sec-G-equivalence}. But we also use the model structure for the coderived category in Section~\ref{sec-recollement}.

\subsection{The injective model structure for the coderived category}
We let $\dwclass{I}$ denote the class of all  complexes of injective sheaves, that is, complexes that are \emph{degreewise} injective. We then let $\class{W}_{\textnormal{co}}$ denote the class of all complexes $W$ such that $W \xrightarrow{} I$ is null homotopic whenever $I \in \dwclass{I}$. Following Positselski, such complexes are called \textbf{coacyclic}. All contractible complexes are coacyclic, and all coacyclic complexes are exact. But even for modules over a  ring, an acyclic complex need not be coacyclic. It is known that $(\class{W}_{\textnormal{co}}, \dwclass{I})$ is an \emph{injective cotorsion pair}, meaning it determines an abelian model structure on $\chqcox$ in which the trivially fibrant objects coincide with the injective objects, equivalently, every object is cofibrant. The following proposition sums all this up.

\begin{proposition}\label{prop-coderived}
There is a cofibrantly generated hereditary abelian model structure $$\mathfrak{M}^{inj}_{\textnormal{co}} = (All, \class{W}_{\textnormal{co}}, \dwclass{I})$$ on $\chqcox$. Its homotopy category, $\textnormal{Ho}(\mathfrak{M}^{inj}_{\textnormal{co}})$, is called the \textbf{coderived category} and it is equivalent to $K(Inj)$, the chain homotopy category of all complexes of injective sheaves. That is, $\textnormal{Ho}(\mathfrak{M}^{inj}_{\textnormal{co}}) \cong K(Inj)$.
\end{proposition}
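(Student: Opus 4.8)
The plan is to invoke the machinery of \emph{injective cotorsion pairs} and Hovey's correspondence, so the whole statement reduces to two things: (1) verifying that $(\class{W}_{\textnormal{co}}, \dwclass{I})$ is an injective cotorsion pair in $\chqcox$ that is moreover \emph{cogenerated by a set}, and (2) identifying the homotopy category. For (1), I would first recall that $\dwclass{I}$, the class of degreewise-injective complexes, is the right-hand side of a complete cotorsion pair whose left-hand side is exactly $\class{W}_{\textnormal{co}}$; this is essentially the content of Positselski's and {\v{S}}{\v{t}}ov{\'{\i}}{\v{c}}ek's work, and in the Grothendieck-category setting it follows from the fact that $\qcox$ has enough injectives together with a standard cotorsion-pair generation argument (taking the complexes $S^n(E)$ and $D^n(E)$, for $E$ ranging over a cogenerating set of injectives, as the cogenerating set — or rather appealing to the general existence result for the ``coderived'' cotorsion pair on chain complexes over any Grothendieck category). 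One then checks the two defining axioms of an injective cotorsion pair in the sense of the second author: that the pair is hereditary, and that $\dwclass{I}$ is exactly the class of injective (= trivially fibrant) objects, equivalently $\leftperp{\dwclass{I}}$ together with all objects being cofibrant. Heredity amounts to $\Ext^{\geq 2}$ vanishing, which here is automatic since $\dwclass{I}$ is closed under the relevant operations (cokernels of monics, being a ``suspension-closed'' perpendicular class).

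Once we know $(\class{W}_{\textnormal{co}}, \dwclass{I})$ is an injective cotorsion pair cogenerated by a set, Hovey's theorem (in the injective-cotorsion-pair form) immediately produces the cofibrantly generated hereditary abelian model structure $\mathfrak{M}^{inj}_{\textnormal{co}} = (All, \class{W}_{\textnormal{co}}, \dwclass{I})$: cofibrations are the monomorphisms (every object cofibrant), fibrations are the epimorphisms with degreewise-injective and ``$\class{W}_{\textnormal{co}}$-trivial'' kernel, and trivial fibrations are the epimorphisms with kernel in $\dwclass{I}$. Cofibrant generation comes along for free from the set-cogeneration. This is all bookkeeping once the cotorsion pair is in hand.

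For the identification $\textnormal{Ho}(\mathfrak{M}^{inj}_{\textnormal{co}}) \cong K(Inj)$: in any abelian model structure the homotopy category is the stable category of the cofibrant-fibrant objects modulo the model-theoretic homotopy relation, and since every object is cofibrant here, the cofibrant-fibrant objects are precisely the complexes in $\dwclass{I}$, i.e. the complexes of injective sheaves. So $\textnormal{Ho}(\mathfrak{M}^{inj}_{\textnormal{co}})$ is the quotient of the full subcategory of $\dwclass{I}$ by the congruence of ``left homotopy''. The remaining point is that on $\dwclass{I}$ this model-theoretic homotopy relation coincides with the usual chain homotopy relation. This follows from Lemma~\ref{lemma-homcomplex-basic-lemma} together with the cylinder/path-object description: two maps between complexes of injectives are model-homotopic iff their difference factors through a ``trivial'' object, and using that $\homcomplex_{\mathbb{X}}(-,I)$ is exact when $I$ is degreewise injective, one shows this is the same as being chain homotopic. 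Hence the quotient is exactly $K(Inj)$.

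The main obstacle is really step (1) — establishing that $\dwclass{I}$ is the right side of a \emph{complete} cotorsion pair with left side exactly the coacyclic complexes, and that this is cogenerated by a set. Everything downstream (the model structure, its cofibrant generation, the identification of $\textnormal{Ho}$) is formal once that is secured. In the write-up I would simply cite the relevant constructions from \cite{positselski, becker, stovicek-purity, gillespie-models-for-hocats-of-injectives, bravo-gillespie-hovey}, since this particular model structure — the injective/coderived one — is already in the literature for any Grothendieck abelian category, and $\qcox$ is Grothendieck; the only thing specific to our setting is that $\mathbb{X}$ is semiseparated quasi-compact, which guarantees $\qcox$ is well-behaved (e.g. has a generating set), but no Noetherian hypothesis is needed here.
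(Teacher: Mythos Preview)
Your proposal is correct and, in the end, takes the same approach as the paper: the paper's proof is simply a citation to \cite[Cor.~4.4]{bravo-gillespie-hovey}, \cite[Prop.~1.3.6]{becker}, \cite[Thm.~4.2]{gillespie-models-for-hocats-of-injectives}, and \cite[Prop.~6.9]{stovicek-purity}, noting that the result holds over any Grothendieck category. Your outline unpacks what those references actually do (injective cotorsion pair cogenerated by a set, Hovey correspondence, identification of the homotopy relation on $\dwclass{I}$ with chain homotopy), but you arrive at exactly the same conclusion---cite the literature---so there is no substantive difference.
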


\begin{proof}
This goes back to~\cite[Cor.~4.4]{bravo-gillespie-hovey} where it was called the \emph{Inj model structure} and~\cite[Prop.~1.3.6]{becker} where it was called the \emph{coderived model structure}. The theorem holds over any Grothendieck category such as $\qcox$, by~\cite[Them.~4.2]{gillespie-models-for-hocats-of-injectives} or~\cite[Prop.~6.9]{stovicek-purity}.
\end{proof}

\subsection{The projective model structure for the contraderived category of an affine scheme}
Let $\mathbb{X} = Spec(R)$ of a commutative ring $R$. In this case, we also have the cotorsion pair $(\dwclass{P},\class{W}_{\textnormal{ctr}})$ in $\ch$. Here, $\dwclass{P}$ is the class of all degreewise projective complexes and the complexes in $\class{W}_{\textnormal{ctr}}$ have been called \textbf{contraacyclic}. The class  $\class{W}_{\textnormal{ctr}}$ is also thick, and this time the cotorsion pair $(\dwclass{P},\class{W}_{\textnormal{ctr}})$ is a \emph{projective cotorsion pair}, giving rise to a model structure in which all complexes are fibrant, as follows.

\begin{proposition}\label{prop-contraderived}
Let $R$ be any ring.
There is a cofibrantly generated hereditary abelian model structure $$\mathfrak{M}^{proj}_{\textnormal{ctr}} = (\dwclass{P}, \class{W}_{\textnormal{ctr}}, All)$$ on $\ch$. Its homotopy category, $\textnormal{Ho}(\mathfrak{M}_{\textnormal{ctr}})$, is called the \textbf{contraderived category} and it is equivalent to $K(Proj)$, the chain homotopy category of all complexes of projectives. That is, $\textnormal{Ho}(\mathfrak{M}^{proj}_{\textnormal{ctr}}) \cong K(Proj)$.
\end{proposition}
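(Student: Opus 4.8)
The plan is to deduce the statement from the machinery of cotorsion pairs, exploiting the fact that $\mathbb{X}=\textnormal{Spec}(R)$ is affine, so that $\qcox=\rmod$ has enough projectives --- precisely what is unavailable in the coderived setting of Proposition~\ref{prop-coderived} over a general $\chqcox$. Concretely, I would show that $(\dwclass{P},\class{W}_{\textnormal{ctr}})$ is a \emph{projective cotorsion pair} on $\ch$: a complete hereditary cotorsion pair in which $\class{W}_{\textnormal{ctr}}$ is thick and $\dwclass{P}\cap\class{W}_{\textnormal{ctr}}$ is exactly the class of projective objects of $\ch$. By the form of Hovey's correspondence developed by Gillespie, such a pair determines a unique abelian model structure $(\dwclass{P},\class{W}_{\textnormal{ctr}},All)$, with every object fibrant, the cofibrant objects $\dwclass{P}$ and the trivial objects $\class{W}_{\textnormal{ctr}}$; and it is cofibrantly generated as soon as the cotorsion pair is cogenerated by a set. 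This is the projective dual of Proposition~\ref{prop-coderived}, so the shortest route is to cite \cite{positselski} and \cite[Prop.~1.3.6]{becker}; the ingredients are as follows.

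First, the cotorsion pair and its completeness: $\dwclass{P}$ is deconstructible and closed under direct summands (for a bounded-below degreewise-projective complex the brutal truncations $\sigma_{\le n}X$ give an explicit filtration whose subquotients are the sphere complexes $S^n(X_n)$, and the general case is standard), so it cogenerates a complete cotorsion pair, whose right-hand class is $\rightperp{\dwclass{P}}=\class{W}_{\textnormal{ctr}}$ by the definition of contraacyclicity. Heredity is the easy direction: if $0\to K\to X\to Y\to 0$ is exact with $X,Y\in\dwclass{P}$, it is degreewise split (each $Y_n$ being projective), so $K\in\dwclass{P}$; dually $\class{W}_{\textnormal{ctr}}$ is closed under cokernels of monomorphisms. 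For the intersection, since any extension of a degreewise-projective complex is degreewise split, Lemma~\ref{lemma-homcomplex-basic-lemma} identifies $\class{W}_{\textnormal{ctr}}$ with the complexes $W$ for which every chain map $P\to W$ with $P\in\dwclass{P}$ is null-homotopic; applying this to $\mathrm{id}_X$ shows $\dwclass{P}\cap\class{W}_{\textnormal{ctr}}$ consists exactly of the contractible complexes of projective modules, which are exactly the projective objects of $\ch$.

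The one genuinely nonformal step, and the one I expect to be the main obstacle, is the thickness of $\class{W}_{\textnormal{ctr}}$. Closure under extensions, direct summands, products and transfinite extensions is immediate for a class of the form $\rightperp{\dwclass{P}}$; given a short exact sequence $0\to A\to B\to C\to 0$, the implications concluding $B\in\class{W}_{\textnormal{ctr}}$ (from $A,C$) and $C\in\class{W}_{\textnormal{ctr}}$ (from $A,B$) follow from extension-closure and the long exact $\Ext$-sequence together with heredity, but concluding $A\in\class{W}_{\textnormal{ctr}}$ from $B,C$ is not formal, and for this I would invoke Positselski's theorem that the contraacyclic complexes form a thick subcategory of $K(R)$. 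Granting this, $(\dwclass{P},\class{W}_{\textnormal{ctr}})$ is a projective cotorsion pair, so $\mathfrak{M}^{proj}_{\textnormal{ctr}}$ exists and is cofibrantly generated. Finally, since every object is fibrant and the cofibrant objects are exactly $\dwclass{P}$, the homotopy category $\textnormal{Ho}(\mathfrak{M}^{proj}_{\textnormal{ctr}})$ is the full subcategory of $\ch$ on the degreewise-projective complexes, with two maps identified when their difference factors through a trivially cofibrant object; since $\dwclass{P}\cap\class{W}_{\textnormal{ctr}}$ is the class of contractible complexes of projectives, this means exactly that the two maps are chain homotopic, so $\textnormal{Ho}(\mathfrak{M}^{proj}_{\textnormal{ctr}})\cong K(Proj)$.
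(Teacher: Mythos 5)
Your proposal is correct and, at its core, takes the same route as the paper: the paper's own proof is simply a citation to \cite[Cor.~6.4]{bravo-gillespie-hovey} and \cite[Prop.~1.3.6]{becker}, and you explicitly offer those same sources as the "shortest route" before unpacking a direct argument.

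One point in the unpacking is off, though it does not invalidate the proof. You write that concluding $A\in\class{W}_{\textnormal{ctr}}$ from $B,C\in\class{W}_{\textnormal{ctr}}$ in a short exact sequence $0\to A\to B\to C\to 0$ is "not formal" and requires Positselski's thickness theorem. In fact it is formal, and it follows from the very tool you already deploy for the intersection $\dwclass{P}\cap\class{W}_{\textnormal{ctr}}$. By Lemma~\ref{lemma-homcomplex-basic-lemma} (together with the observation you make, that extensions by a degreewise projective complex are degreewise split, so $\Ext^1$ agrees with $\Ext^1_{dw}$), one has $W\in\class{W}_{\textnormal{ctr}}=\rightperp{\dwclass{P}}$ if and only if $\homcomplex(P,W)$ is exact for every $P\in\dwclass{P}$. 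Since each $P_n$ is projective, applying $\homcomplex(P,-)$ to $0\to A\to B\to C\to 0$ yields a short exact sequence of complexes of abelian groups, and two-out-of-three for exactness gives closure in all three directions at once — including the direction you flagged. This is exactly the dual of the thickness argument the paper itself writes out in the proof of Theorem~\ref{them-exact Inj model structure}, where $\homcomplex(-,E)$ plays the role $\homcomplex(P,-)$ plays here. So Positselski is not needed for this step. A second, smaller remark: your parenthetical justification that $\dwclass{P}$ is deconstructible only treats the bounded-below case via the sphere filtration; the unbounded case is genuinely less immediate and is better discharged by citing \cite{bravo-gillespie-hovey} (or Šťovíček), which is in any event the reference you already invoke.
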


\begin{proof}
Again, this model was constructed in~\cite[Cor.~6.4]{bravo-gillespie-hovey} where it was called the \emph{Proj model structure} and in~\cite[Prop.~1.3.6]{becker} where it was called the \emph{contraderived model structure}.
\end{proof}

\subsection{The flat model structure for the contraderived category}

If we are in a Grothendieck category that is lacking enough projectives, then assuming the category has a tensor product, we try to use flat objects as replacements.  In particular, for a semiseparated Noetherian scheme $\mathbb{X}$, Murfet studied $$\class{D}(Flat) = K(Flat)/\tilclass{F},$$ the  \emph{derived category of flat quasi-coherent sheaves}. He shows this to be the correct replacement of $K(Proj)$ for non-affine schemes.  Since then however, we have the important realization Theorem~\ref{them-cotorsion-complexes-scheme}. It follows that $K(Flat\text{-}Cot)$, the homotopy category of all sheaves that are both flat and cotorsion, is equivalent to $\class{D}(Flat)$.
Indeed we can describe the equivalence $\class{D}(Flat) \cong K(Flat\text{-}Cot)$ from the abelian model category point of view as follows.
In~\cite[Section~4]{gillespie-mock projectives}, we can find the construction of a flat abelian  model structure for the contraderived category. It exists  on $\chqcox$ for any scheme $\mathbb{X}$ with a flat generator. Substituting $\dgclass{C} = \dwclass{C}$ gives us the following result. %Letting $(\class{F},\class{C})$ denote Enochs' flat cotorsion pair, $(\dwclass{F}, \rightperp{\dwclass{F}})$  and $(\tilclass{F}, \dwclass{C})$ are each complete hereditary cotorsion pairs, by~\cite{gillespie} and~\cite[Theorem~5.5 and Section~5.3]{gillespie-degreewise-model-strucs}. One argues that each has the same core - the contractible complexes with components in $\class{FC}$, the class of all flat-cotorsion $R$-modules, or,  quasi-coherent sheaves in that case. The main theorem of~\cite{gillespie-hovey triples} immediately gives a model structure $\mathfrak{M} = (\dwclass{F}, \class{W},\dwclass{C})$.

\begin{theorem}[Corollary 4.1 of \cite{gillespie-mock projectives} with Theorem 3.3 of~\cite{cet-G-flat-stable-scheme}]\label{them-contra-model-scheme}
There is a cofibrantly generated hereditary abelian model structure $$\mathfrak{M}^{flat}_{\textnormal{ctr}} = (\dwclass{F},  \class{W}_{\textnormal{ctr}},\dwclass{C})$$ on $\chqcox$.
Consequently, we have triangulated equivalences
$$\textnormal{Ho}(\mathfrak{M}^{flat}_{\textnormal{ctr}}) \cong K(Flat\text{-}Cot) \cong \class{D}(Flat).$$
\end{theorem}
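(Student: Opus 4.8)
The plan is to assemble this from two ingredients: the general existence theorem for the flat contraderived model structure, and the substitution afforded by Theorem~\ref{them-cotorsion-complexes-scheme}. First I would recall from \cite[Section~4, Corollary~4.1]{gillespie-mock projectives} that for any scheme $\mathbb{X}$ admitting a flat generator, there is a cofibrantly generated hereditary abelian model structure on $\chqcox$ of the form $(\dwclass{F}, \class{W}_{\textnormal{ctr}}, \dgclass{C})$, where $\dwclass{F}$ is the class of degreewise flat complexes, $\dgclass{C}$ is the class of complexes $C$ of cotorsion sheaves such that every chain map from a complex in $\tilclass{F}$ (exact with flat cycles) is null-homotopic, and $\class{W}_{\textnormal{ctr}}$ is the thick class of contraacyclic complexes making $(\dwclass{F}, \class{W}_{\textnormal{ctr}})$ a projective-type cotorsion pair in the sense compatible with the fibrant class $\dgclass{C}$. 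A semiseparated quasi-compact scheme does admit a set of flat generators, so the hypothesis is satisfied. The only thing that changes the \emph{statement} is that Theorem~\ref{them-cotorsion-complexes-scheme} says precisely $\dgclass{C} = \dwclass{C}$ (and $\tilclass{C} = \exclass{C}$), so we may replace the fibrant class by the class of \emph{all} complexes of cotorsion sheaves. This gives the model structure $\mathfrak{M}^{flat}_{\textnormal{ctr}} = (\dwclass{F}, \class{W}_{\textnormal{ctr}}, \dwclass{C})$ as claimed; cofibrant generation and hereditariness are inherited verbatim since we have only renamed a class, not altered the cotorsion pairs.

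For the triangulated equivalences, I would argue as follows. The homotopy category $\textnormal{Ho}(\mathfrak{M}^{flat}_{\textnormal{ctr}})$ of any abelian model structure is the stable category of the cofibrant-fibrant objects, which here are the complexes lying in $\dwclass{F} \cap \dwclass{C}$, i.e. complexes of sheaves that are both flat and cotorsion. Since the model structure is hereditary, two such complexes are identified in the homotopy category exactly when they are chain homotopy equivalent (morphisms in the homotopy category between cofibrant-fibrant objects are chain homotopy classes of chain maps). Hence $\textnormal{Ho}(\mathfrak{M}^{flat}_{\textnormal{ctr}})$ is the chain homotopy category of complexes of flat-cotorsion sheaves, which is the definition of $K(Flat\text{-}Cot)$. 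This gives the first equivalence. For the second, $\class{D}(Flat) = K(Flat)/\tilclass{F}$ is Murfet's derived category of flats, and one identifies $K(Flat\text{-}Cot)$ with this Verdier quotient: every complex of flats admits a ``cotorsion replacement'' — a quasi-isomorphism-type map into a flat-cotorsion complex with kernel or cokernel in $\tilclass{F}$ — because $(\tilclass{F}, \dwclass{C})$ (equivalently $(\exclass{F}, \dwclass{C})$ after the substitution) is a complete cotorsion pair in $\dwclass{F}$; and a map between flat-cotorsion complexes becomes zero in the quotient iff it factors through a complex in $\tilclass{F}$ iff it is null-homotopic, again by Theorem~\ref{them-cotorsion-complexes-scheme}. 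This exhibits $K(Flat\text{-}Cot)$ as a full subcategory of $K(Flat)$ onto which the quotient functor restricts to an equivalence with $\class{D}(Flat)$.

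The main obstacle, and the place where genuine content sits rather than bookkeeping, is verifying that the flat model structure of \cite{gillespie-mock projectives} actually applies here and that its fibrant class is the one Theorem~\ref{them-cotorsion-complexes-scheme} controls — in other words, that the abstract machinery (Hovey's correspondence for cotorsion pairs, and the deconstructibility/set-generation needed for cofibrant generation) goes through for $\chqcox$ using only a flat generator rather than a projective one. This is where the semiseparated and quasi-compact hypotheses are really used: semiseparatedness ensures the tensor product and flat objects behave well locally, and quasi-compactness (together with semiseparatedness) guarantees the existence of a \emph{set} of flat generators, which is what makes the cotorsion pairs small and hence the model structure cofibrantly generated. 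Once those inputs are in hand, everything else — the renaming $\dgclass{C} \rightsquigarrow \dwclass{C}$, the identification of the homotopy category with $K(Flat\text{-}Cot)$, and the Verdier-quotient description of $\class{D}(Flat)$ — is formal. I would therefore organize the proof by first citing \cite[Corollary~4.1]{gillespie-mock projectives} for the existence statement, then invoking Theorem~\ref{them-cotorsion-complexes-scheme} to rewrite the classes, and finally spelling out the two equivalences using the standard description of homotopy categories of hereditary abelian model structures together with a completeness argument for the cotorsion pair $(\tilclass{F},\dwclass{C})$ restricted to $\dwclass{F}$.
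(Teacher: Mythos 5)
Your proposal is correct and takes essentially the same route as the paper: cite \cite[Cor.~4.1]{gillespie-mock projectives} for the existence of the model structure $(\dwclass{F}, \class{W}_{\textnormal{ctr}}, \dgclass{C})$, use Theorem~\ref{them-cotorsion-complexes-scheme} to rewrite the fibrant class as $\dwclass{C}$, identify the homotopy category with bifibrant objects modulo chain homotopy to get $K(Flat\text{-}Cot)$, and obtain $\class{D}(Flat)$ from the key identity $\dwclass{F}\cap\class{W}_{\textnormal{ctr}} = \tilclass{F}$. The paper packages the last step by citing \cite[Prop.~5.3(3)]{gillespie-exact model structures} for the induced exact model structure on $\cha{F}$, whereas you spell out the Verdier-quotient identification directly (and there is a small notational slip where you write $\exclass{F}$, which is not a class the substitution produces), but the substance is identical.
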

It is the fact $\dwclass{F}\cap \class{W}_{\textnormal{ctr}} = \tilclass{F}$, the class of all exact complexes with flat cycles, that implies this is a model structure for Murfet's category $\class{D}(Flat)$.  In fact, by~\cite[Prop.~5.3(3)]{gillespie-exact model structures}, $\mathfrak{M}^{flat}_{\textnormal{ctr}}$ restricts to an  exact model structure $(\dwclass{F},  \tilclass{F},\dwclass{FC})$ on $\cha{F}$. Here, $\class{FC}$ denotes the class of all flat-cotorsion sheaves and $\cha{F}$ denotes the exact category of all chain complexes of flat sheaves. So this is an exact model structure on $\cha{F}$ with $\tilclass{F}$ as the class of trivial objects; clearly its homotopy category is the derived category of flats, as can be defined for general exact categories as in~\cite{neeman-exact category}.

We use the notation $\class{W}_{\textnormal{ctr}}$ because in the affine case this indeed is precisely the class of contraacyclic complexes appearing in Proposition~\ref{prop-contraderived}.
 The key to this is a nontrivial result of Neeman from~\cite{neeman-flat}. In the notation here, it is the statement $\dwclass{F} \cap \class{W}_{\textnormal{ctr}} = \tilclass{F}$ for any ring $R$. So in the affine case we have the following. Although again, the ring need not be commutative.

\begin{theorembis}{them-contra-model-scheme}[Corollary 4.1 of \cite{gillespie-mock projectives} with Theorem 5.3 of~\cite{bazzoni-cortes-estrada} and~\cite{neeman-flat}]\label{them-contra-model}
Let $R$ be any ring. Then there is a cofibrantly generated hereditary abelian model structure $$\mathfrak{M}^{flat}_{\textnormal{ctr}} = (\dwclass{F},  \class{W}_{\textnormal{ctr}},\dwclass{C})$$ on $\ch$ where $\class{W}_{\textnormal{ctr}}$ is precisely the class of contraacyclic complexes. Consequently, we have triangulated equivalences
$$K(Proj) \cong \textnormal{Ho}(\mathfrak{M}^{proj}_{\textnormal{ctr}}) = \textnormal{Ho}(\mathfrak{M}^{flat}_{\textnormal{ctr}}) \cong K(Flat\text{-}Cot) \cong \class{D}(Flat).$$
\end{theorembis}

\section{The stable derived category and recollement}\label{sec-recollement}

In Neeman's paper~\cite{neeman-homotopy category of injectives}, the question is raised as to when the recollement of Krause from~\cite{krause-stable derived cat of a Noetherian scheme} holds. In this section we show that the fact $\dgclass{C} = \dwclass{C}$ (Theorem~\ref{them-cotorsion-complexes-scheme}) implies that the recollement holds for any semiseparated and quasi-compact scheme $\mathbb{X}$, and avoids the need for generators of finite projective dimension, as in~\cite{gillespie-models-for-hocats-of-injectives}.

First we will construct an injective model structure for Krause's category $S(Inj)$, the stable derived category.
Critical to the proof is the fact that for any semiseparated and quasi-compact scheme $\mathbb{X}$, the category $\qcox$ admits a generating set $\{U_i\}$ with each $U_i$ a flat sheaf. A straightforward proof of this fact, suggested by Neeman, is given in~\cite[Appendix~A, Lemma~1]{efimov-positselski}. It is very notable too that \emph{any} quasi-compact and quasi-separated scheme possessing a generating set of flat sheaves is necessarily  semiseparated, by~\cite{slavick-stovicek-flat-generators}. So with our approach of using flat generators this is the best result we can expect.

\begin{theorem}\label{them-exact Inj model structure}
Assume $\mathbb{X}$ is a semiseparated and quasi-compact scheme. Let $\exclass{I}$ denote the class of all exact (acyclic) complexes of injective quasi-coherent sheaves. There is an injective abelian model structure $$\mathfrak{M}^{inj}_{\textnormal{st}} = (All, \class{W}_{\textnormal{st}}, \exclass{I})$$ on $\chqcox$.
%, the \textbf{exact Inj model structure},
Its homotopy category is equivalent to $S(Inj)$, the chain homotopy category of all exact complexes of injectives. That is, $\textnormal{Ho}(\mathfrak{M}^{inj}_{\textnormal{st}}) \cong S(Inj)$. This is called the \textbf{injective stable derived category of $\mathbb{X}$} and it is a well-generated triangulated category.
\end{theorem}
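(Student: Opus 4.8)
The plan is to realize $\exclass{I}$ as the right-hand class of an \emph{injective cotorsion pair} on $\chqcox$: a complete hereditary cotorsion pair $(\class{W}_{\textnormal{st}},\exclass{I})$ for which $\class{W}_{\textnormal{st}}\cap\exclass{I}$ is the class of categorically injective objects of $\chqcox$. By the Hovey correspondence such a pair is the same thing as an abelian model structure $(All,\class{W}_{\textnormal{st}},\exclass{I})$, and its homotopy category is then read off from the bifibrant objects. So I would set $\class{W}_{\textnormal{st}}:=\leftperp{(\exclass{I})}$, the left $\Ext^1_{\chqcox}$-orthogonal, and carry out three steps: completeness of the pair; heredity together with the computation of $\class{W}_{\textnormal{st}}\cap\exclass{I}$; and the identification of the model structure and its homotopy category.

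The crux is completeness, which I would get by showing $\exclass{I}=\rightperp{\class{S}}$ for a \emph{set} $\class{S}$ of complexes; the small object argument, in the form valid for Grothendieck categories, then produces the complete, cofibrantly generated cotorsion pair $(\class{W}_{\textnormal{st}},\exclass{I})$. I take $\class{S}=\{D^{n}(S)\}\cup\{S^{n}(U_i)\}$, where $n$ ranges over $\Z$, the $S$ range over a representative set of subobjects of a fixed generator of $\qcox$, and $\{U_i\}$ is the generating set of \emph{flat} quasi-coherent sheaves supplied by~\cite{efimov-positselski}. Orthogonality to the disks $D^{n}(S)$ says exactly that each $X_n$ is injective (as in Proposition~\ref{prop-coderived}). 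Using the short exact sequences $0\to S^{n-1}(U_i)\to D^{n}(U_i)\to S^{n}(U_i)\to 0$, one computes for a degreewise injective $X$ that $\Ext^1_{\chqcox}(S^{n}(U_i),X)=\cok(\Hom_{\mathbb{X}}(U_i,X_n)\to\Hom_{\mathbb{X}}(U_i,Z_{n-1}X))$. Vanishing of these cokernels for all $n,i$ forces every morphism $U_i\to Z_{n}X$ to factor through $B_{n}X$, and since the $U_i$ generate $\qcox$ this gives $B_{n}X=Z_{n}X$; so $\rightperp{\class{S}}\subseteq\exclass{I}$. Conversely, if $X\in\exclass{I}$, the same computation --- now using that $X$ is exact and each $X_n$ injective --- identifies $\Ext^1_{\chqcox}(S^{n}(U_i),X)$ with $\Ext^1_{\mathbb{X}}(U_i,Z_nX)$, which vanishes because $U_i$ is flat while the cycle $Z_nX$ is cotorsion by Theorem~\ref{them-cotorsion-complexes-scheme}. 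Hence $\exclass{I}=\rightperp{\class{S}}$. This is exactly where Theorem~\ref{them-cotorsion-complexes-scheme} takes over the role played in~\cite{gillespie-models-for-hocats-of-injectives} by the hypothesis that the generators have finite projective dimension, under which one would instead compute against bounded projective resolutions of the generators.

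Next I would verify that $\exclass{I}$ is closed under cokernels of monomorphisms among its members --- in such a short exact sequence the cokernel is degreewise injective (the sequence is degreewise split) and exact (homology long exact sequence) --- so that the cotorsion pair is hereditary. Then $\class{W}_{\textnormal{st}}\cap\exclass{I}$ is the class of injective objects of $\chqcox$, equivalently the contractible complexes of injective sheaves: a contractible complex of injectives lies in $\exclass{I}$ and in $\class{W}_{\textnormal{co}}\subseteq\class{W}_{\textnormal{st}}$ (using Proposition~\ref{prop-coderived} and $\exclass{I}\subseteq\dwclass{I}$); conversely, given $X\in\class{W}_{\textnormal{st}}\cap\exclass{I}$, embed it in its injective envelope $E$ in $\chqcox$, note that $E/X$ is again exact and degreewise injective, hence in $\exclass{I}$, so $\Ext^1_{\chqcox}(X,E/X)=0$ and the embedding splits, making $X$ injective. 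Together with completeness, this forces $\class{W}_{\textnormal{st}}$ to be thick: closure under extensions and retracts is automatic, closure under kernels of epimorphisms is heredity, and closure under cokernels of monomorphisms follows from the standard pushout argument (push out a special $\exclass{I}$-preenvelope of the subobject; the resulting extension of the quotient by an injective object splits).

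By these steps $(\class{W}_{\textnormal{st}},\exclass{I})$ is a cofibrantly generated injective cotorsion pair, hence determines a cofibrantly generated hereditary abelian model structure $\mathfrak{M}^{inj}_{\textnormal{st}}=(All,\class{W}_{\textnormal{st}},\exclass{I})$ on $\chqcox$ in which every object is cofibrant and the fibrant objects are exactly $\exclass{I}$. The bifibrant objects are then the exact complexes of injective sheaves, and since the cylinder and path objects are the usual ones for chain complexes, the homotopy relation on bifibrant objects is ordinary chain homotopy; thus $\textnormal{Ho}(\mathfrak{M}^{inj}_{\textnormal{st}})$ is the full subcategory of $K(\mathbb{X})$ on the exact complexes of injectives, that is, $S(Inj)$. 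Well-generatedness then follows either from $\mathfrak{M}^{inj}_{\textnormal{st}}$ being a stable combinatorial model category on the locally presentable category $\chqcox$, or from $S(Inj)$ being a localizing subcategory of the well-generated category $K(Inj)$. I expect the completeness step --- and within it the inclusion $\exclass{I}\subseteq\rightperp{\{S^{n}(U_i)\}}$ --- to be the main obstacle: with only flat generators at hand there is no a priori reason for the spheres $S^{n}(U_i)$ to be right-orthogonal to $\exclass{I}$, and it is precisely the cotorsion of the cycles of an exact complex of injectives, i.e.\ Theorem~\ref{them-cotorsion-complexes-scheme}, that makes the argument succeed.
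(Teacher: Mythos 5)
Your overall route is the same as the paper's: cogenerate $(\class{W}_{\textnormal{st}},\exclass{I})$ by a set of disks and spheres, with the spheres $S^{n}(U_i)$ on flat generators, observing that their left orthogonality to $\exclass{I}$ is exactly the content of Theorem~\ref{them-cotorsion-complexes-scheme}, and then run the injective cotorsion pair machinery. That is the key insight and you have it. However, the disk part of your cogenerating set is wrong: you take $D^{n}(S)$ for $S$ a \emph{subobject} of a generator and claim orthogonality to these forces each $X_n$ to be injective. But $\Ext^1_{\chqcox}(D^n(S),X)\cong\Ext^1_{\mathbb{X}}(S,X_n)$, and vanishing of this for all subobjects $S$ of a generator does not give injectivity of $X_n$ (over $\Z$ every subobject of $\Z$ is free, so the condition is vacuous, while not every abelian group is divisible). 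Baer's criterion requires disks on \emph{quotients}: the paper uses $D^n(U_i/C)$ for $C\subseteq U_i$, so that $0\to D^n(C)\to D^n(U_i)\to D^n(U_i/C)\to 0$ turns orthogonality into surjectivity of $\Hom_{\mathbb{X}}(U_i,X_n)\to\Hom_{\mathbb{X}}(C,X_n)$. Swapping in the quotient disks fixes your completeness argument.

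Your route to thickness also has a gap. To show $\class{W}_{\textnormal{st}}\cap\exclass{I}$ consists of injectives you argue that $0\to X\to E\to E/X\to 0$ splits because $\Ext^1_{\chqcox}(X,E/X)=0$; but that extension is classified by $\Ext^1_{\chqcox}(E/X,X)$, and to kill that group you would need $E/X\in\class{W}_{\textnormal{st}}$, which for an arbitrary injective envelope you can only deduce from the thickness you are trying to establish. This can be repaired --- embed $X$ into $\mathrm{Cone}(\mathrm{id}_X)$ instead, whose quotient is $\Sigma X\in\class{W}_{\textnormal{st}}$ since $\class{W}_{\textnormal{st}}=\leftperp{\exclass{I}}$ is closed under suspension. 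The paper avoids the detour entirely: using Lemma~\ref{lemma-homcomplex-basic-lemma} and closure of $\exclass{I}$ under suspension, $W\in\class{W}_{\textnormal{st}}$ if and only if $\homcomplex_{\mathbb{X}}(W,E)$ is exact for every $E\in\exclass{I}$, and thickness follows from two-out-of-three on the resulting short exact sequences of hom-complexes (short exact because each $E$ is degreewise injective); one then checks contractibles are trivial and invokes~\cite[Proposition~3.3]{bravo-gillespie-hovey}, which hands back the identification of the core for free.
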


\begin{proof}
As noted above, $\qcox$ admits a generating set $\{U_i\}$ with each $U_i$ a flat sheaf. Let
\[
\class{T}_{st} = \{D^{n} (U_i/C) \,|\, n\in\Z, U_i \in \{U_i\}, \forall C \subseteq U_i\} \bigcup \{S^{n} (U_i) \,|\, n\in\Z, U_i \in \{U_i\}  \}.
\]

Let us first show $\rightperp{\class{T}_{st}} =\exclass{I}$. For all subobjects $C \subseteq U_i$, consider the short exact sequence
$$0 \xrightarrow{} D^n(C) \xrightarrow{} D^n(U_i) \xrightarrow{} D^n(U_i/C) \xrightarrow{} 0.$$
For a chain complex $Y$, $\Ext^{1}_{\chqcox}(D^{n} (U_i/C),Y) = 0$ implies an epimorphism
$$\Hom_{\chqcox}(D^{n}(U_i),Y) \xrightarrow{} \Hom_{\chqcox}(D^{n}(C),Y).$$
By a well known adjunction this is equivalent to an  epimorphism
$$\Hom_{\mathbb{X}}(U_i,Y_n) \xrightarrow{} \Hom_{\mathbb{X}}(C,Y_n).$$
This is equivalent to each $Y_n$ being an injective object by the generalization of Baer's criterion to Grothendieck categories~\cite[Prop.~2.9]{stenstrom}. Conversely, if each $Y_n$ is injective, then
$$\Ext^{1}_{\chqcox}(D^{n} (U_i/C),Y) \cong \Ext^{1}_{\mathbb{X}}(U_i/C,Y_n) = 0.$$
We conclude that $\rightperp{\class{T}_{st}}$ is the class of all complexes of injectives $Y$ such that
$$\Ext^{1}_{\chqcox}(S^{n}(U_i),Y) = 0.$$

Next, consider the short exact sequences
$$0 \xrightarrow{} S^{n}(U_i) \xrightarrow{} D^{n+1}(U_i) \xrightarrow{} S^{n+1}(U_i) \xrightarrow{} 0.$$
The condition $\Ext^{1}_{\chqcox}(S^{n+1}(U_i),Y) = 0$ implies we have an epimorphism
$$\Hom_{\chqcox}(D^{n+1}(U_i),Y) \xrightarrow{} \Hom_{\chqcox}(S^{n}(U_i),Y).$$
Since $\{U_i\}$ is a set of generators this implies that $Y$ must be an exact complex. (For example, see~\cite[Lemma~2.4]{gillespie-degreewise-model-strucs}.)
Now for any exact complex $Y$ we have an isomorphism
$$\Ext^{1}_{\chqcox}(S^{n}(U_i),Y) \cong \Ext^{1}_{\mathbb{X}}(U_i, Z_{n}Y).$$
We conclude that $\rightperp{\class{T}_{st}}$ is the class of all exact complexes of injectives $Y$ such that
$$\Ext^{1}_{\mathbb{X}}(U_i,Z_nY) = 0$$ for all $U_i$ and all $n$. But by Theorem~\ref{them-cotorsion-complexes-scheme}, (that is, \cite[Theorem~3.3]{cet-G-flat-stable-scheme}), each $Z_nY$ is cotorsion, and so this Ext group vanishes because the $U_i$ are flat.
This completes the proof that $\rightperp{\class{T}_{st}} =\exclass{I}$.

Any generating set $\class{T}$ in a Grothendieck category cogenerates a complete cotorsion pair
$(\leftperp{(\rightperp{\class{T}})},\rightperp{\class{T}})$, by~\cite[Corollary~2.15(2)]{saorin-stovicek}. Setting $\class{W}_{\textnormal{st}} = \leftperp{\exclass{I}}$, this proves $(\class{W}_{\textnormal{st}},\exclass{I})$ is a complete cotorsion pair.

We use~\cite[Proposition~3.3]{bravo-gillespie-hovey}  to show $(\class{W}_{\textnormal{st}},\exclass{I})$ determines an (injective) abelian model structure $\mathfrak{M}^{inj}_{\textnormal{st}} = (All, \class{W}_{\textnormal{st}}, \exclass{I})$. It requires that we show $\class{W}_{\textnormal{st}}$ is thick, and that $\class{W}_{\textnormal{st}}$ contains all injective chain complexes.
To see that $\class{W}_{\textnormal{st}}$ is thick, first note that, because $\exclass{I}$
consists of complexes of injectives and is closed under suspensions, Lemma~\ref{lemma-homcomplex-basic-lemma} implies that $W\in \class{W}_{\textnormal{st}}$
if and only if $\homcomplex_{\mathbb{X}}(W,E)$ is exact for all $E\in \exclass{I}$.  Now
suppose we have a short exact sequence
\[
0 \xrightarrow{} W \xrightarrow{} V \xrightarrow{} Z \xrightarrow{} 0,
\]
where two out of three of the complexes are in $\class{W}_{\textnormal{st}}$, and let
$E\in \exclass{I}$.  Since $E$ is a complex of injectives, the resulting
sequence
\[
0 \xrightarrow{} \homcomplex_{\mathbb{X}}(Z,E) \xrightarrow{} \homcomplex_{\mathbb{X}}(V,E) \xrightarrow{}
\homcomplex_{\mathbb{X}}(W,E)\xrightarrow{} 0
\]
is still a short exact sequence.  Since two out of three of these complexes are
exact, so is the third. Therefore $\class{W}_{\textnormal{st}}$ is a thick class.
If $W$ is contractible, then $\homcomplex_{\mathbb{X}}(W,E)$ is obviously
exact for any $E$, so $W\in \class{W}_{\textnormal{st}}$.
This completes the proof that $\mathfrak{M}^{inj}_{\textnormal{st}}$ is an (injective) abelian model structure. The homotopy category of any such model category is triangulated; for example, see~\cite[Corollary~1.1.15]{becker}.

The arguments given above, showing that the cotorsion pair $(\class{W}_{\textnormal{st}},\exclass{I})$ is cogenerated by a set, reveal it to have an explicit set of \emph{generating monorphisms} in the sense of~\cite{hovey}. (They are the monomorphisms in the short exact sequences.) Alternatively, it is shown in~\cite{saorin-stovicek} that a set of generating monomorphisms always exists whenever a cotorsion pair is cogenerated by a set containing a family of generators. It means that the model structure is cofibrantly generated~\cite[Section~6]{hovey}, and since we have a cofibrantly generated model structure on a locally presentable (pointed) category, a main result from~\cite{rosicky-brown representability combinatorial model srucs} tells us that it is well generated in the sense of~\cite{neeman-well generated}. Also see~\cite[Chapter~7]{hovey-model-categories} for related results.

It is left to prove the claim that $\textnormal{Ho}(\mathfrak{M}^{inj}_{\textnormal{st}}) \cong S(Inj)$, the chain homotopy category of all acyclic complexes of injectives.
From the fundamental theorem of model categories~\cite[Theorem~1.2.10]{hovey-model-categories} we know that the homotopy category of this model structure is equivalent to $\exclass{I}/\sim$ where $\sim$ denotes the formal homotopy relation. However, it follows from~\cite[Corollary~4.8]{gillespie-exact model structures} that formally $f \sim g$ if and only if $g-f$ factors through an injective object. Since the categorically injective complexes are contractible, and since contractible complexes are trivial, this implies that $f \sim g$ if and only if $f$ and $g$ are chain homotopic in the usual sense; see~\cite[Lemma~5.1]{gillespie-hereditary-abelian-models}. So the homotopy category of bifibrant objects is exactly $S(Inj)$.
\end{proof}

Becker showed in~\cite{becker} that Krause's recollement $S(Inj) \xrightarrow{} K(Inj) \xrightarrow{} \class{D}(R)$, from~\cite{krause-stable derived cat of a Noetherian scheme}, holds for any ring $R$, even without the noetherian hypothesis.  Krause's original work in~\cite{krause-stable derived cat of a Noetherian scheme} was in the setting of separated noetherian schemes and locally noetherian categories. In~\cite{neeman-homotopy category of injectives}, Neeman also studies the homotopy category of injectives and asks what is the proper generality for which the recollement will hold. He produced an example of a locally noetherian Grothendieck category $\cat{G}$ in which products of acyclic complexes of injectives need not be acyclic. It implies the failure of recollement, and by~\cite{krause-stable derived cat of a Noetherian scheme}, $\class{D}(\class{G})$ is not compactly generated.

%In general, the recollement continues to appear to be connected to the existence of a nice set of generators.  For example, $R$ is a projective generator for $R$-Mod and it is shown in~\cite{gillespie-models-for-hocats-of-injectives} that the recollement holds when $\cat{G}$ comes with a set of generators of finite projective dimension.
But now we have the following consequence of Theorem~\ref{them-exact Inj model structure} (so again, the key is~\cite[Theorem~3.3]{cet-G-flat-stable-scheme}).

\begin{corollary}\label{cor-recollement of krause}
Assume $\mathbb{X}$ is a semiseparated and quasi-compact scheme. Then the canonical functors $S(Inj) \xrightarrow{} K(Inj) \xrightarrow{} \class{D}(\qcox)$ induce a recollement.
\end{corollary}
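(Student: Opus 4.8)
The plan is to deduce the recollement $S(Inj) \to K(Inj) \to \class{D}(\qcox)$ purely formally from the three model structures now available: the coderived model structure $\mathfrak{M}^{inj}_{\textnormal{co}} = (All, \class{W}_{\textnormal{co}}, \dwclass{I})$ of Proposition~\ref{prop-coderived}, whose homotopy category is $K(Inj)$; the injective stable derived model structure $\mathfrak{M}^{inj}_{\textnormal{st}} = (All, \class{W}_{\textnormal{st}}, \exclass{I})$ of Theorem~\ref{them-exact Inj model structure}, whose homotopy category is $S(Inj)$; and the ordinary injective model structure for $\class{D}(\qcox)$, which is $(All, \textnormal{exact complexes}, \textnormal{complexes with injective components that are also}\ K\text{-injective})$, or more cleanly the cotorsion pair giving the derived category. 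The point is that all three are \emph{injective} cotorsion pairs on $\chqcox$ sharing the same class $All$ of cofibrant objects, and their classes of trivial objects are nested: $\class{W}_{\textnormal{co}} \subseteq \class{W}_{\textnormal{st}} \subseteq \class{W}_{\textnormal{der}}$, where $\class{W}_{\textnormal{der}}$ is the class of exact complexes. (The first inclusion holds since coacyclic complexes are exact and a coacyclic complex $W$ admits only null-homotopic maps to any $E \in \exclass{I} \subseteq \dwclass{I}$; the second since exact complexes of injectives lie in $\class{W}_{\textnormal{st}}$ would need checking, but in fact $\class{W}_{\textnormal{st}}$ consists of exact complexes because $\exclass{I}$ contains, after Theorem~\ref{them-cotorsion-complexes-scheme}, enough exact complexes of injectives to detect acyclicity.)

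First I would invoke the general machinery relating a chain of nested hereditary abelian model structures with common (co)fibrant class to a recollement of their homotopy categories. This is precisely the situation axiomatized in work on "injective cotorsion pairs" and recollements — see the discussion in~\cite{becker} and~\cite{gillespie-models-for-hocats-of-injectives}: given injective cotorsion pairs $(\class{W}_1, \class{F}_1)$ and $(\class{W}_2, \class{F}_2)$ on a Grothendieck category with $\class{W}_1 \subseteq \class{W}_2$ (equivalently $\class{F}_2 \subseteq \class{F}_1$), one obtains a localization sequence, and if moreover there is a third such pair fitting below, one gets a recollement provided the relevant Bousfield (co)localization functors exist. The existence of these functors is exactly guaranteed by the model structures being \emph{cofibrantly generated}: Theorem~\ref{them-exact Inj model structure} established this for $\mathfrak{M}^{inj}_{\textnormal{st}}$, Proposition~\ref{prop-coderived} for $\mathfrak{M}^{inj}_{\textnormal{co}}$, and the injective model structure for $\class{D}(\qcox)$ is cofibrantly generated on any Grothendieck category.

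The concrete steps: (1) Identify the three injective model structures and verify the nesting $\class{W}_{\textnormal{co}} \subseteq \class{W}_{\textnormal{st}} \subseteq \{\text{exact complexes}\}$; the middle and outer terms both use $All$ as cofibrant objects. (2) From the inclusion $\exclass{I} \subseteq \dwclass{I}$ (equivalently $\class{W}_{\textnormal{co}} \subseteq \class{W}_{\textnormal{st}}$), obtain that the identity functor is left Quillen $\mathfrak{M}^{inj}_{\textnormal{co}} \to \mathfrak{M}^{inj}_{\textnormal{st}}$, giving a Bousfield localization; the kernel of $K(Inj) \to S(Inj)$ is the homotopy category of the intermediate model structure $(All, \class{W}_{\textnormal{st}}, \dwclass{I})$, which one recognizes — using $\dgclass{C}=\dwclass{C}$, i.e. Theorem~\ref{them-cotorsion-complexes-scheme} — as equivalent to $\class{D}(\qcox)$ via the description $K(Inj)/S(Inj) \simeq \class{D}(\qcox)$. (3) Symmetrically, the inclusion of exact complexes among the trivial objects of the derived model structure gives the other half, producing the colocalization. (4) Assemble the localization and colocalization sequences, using compact/well-generatedness (Theorem~\ref{them-exact Inj model structure} gives well-generation of $S(Inj)$, and $K(Inj)$ is well generated by Krause/Neeman under our hypotheses) together with Brown representability to produce the missing adjoints, yielding the recollement in the sense of~\cite{krause-stable derived cat of a Noetherian scheme}.

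The main obstacle I anticipate is step (2): correctly identifying the Verdier quotient $K(Inj)/S(Inj)$ with $\class{D}(\qcox)$ and matching it with the homotopy category of the intermediate injective model structure $(All, \class{W}_{\textnormal{st}}, \dwclass{I})$. One must check that a complex of injectives is \emph{coacyclic} precisely when it is exact \emph{and} trivial in $\mathfrak{M}^{inj}_{\textnormal{st}}$, i.e. that $\dwclass{I} \cap \class{W}_{\textnormal{st}} = \dwclass{I} \cap \{\text{exact}\}$ — that every exact complex of injectives is $\exclass{I}$-trivial, which is where the strength of Theorem~\ref{them-cotorsion-complexes-scheme} (giving $\exclass{I} = \tilclass{I}$, so that $\exclass{I}$ is closed under the operations needed) enters decisively and replaces the "generators of finite projective dimension" hypothesis of~\cite{gillespie-models-for-hocats-of-injectives}. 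Once this degreewise-injective identification is in place, the rest is the formal recollement package and the standard well-generatedness/Brown representability argument for the existence of adjoints.
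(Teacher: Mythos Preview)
Your approach is essentially the paper's: feed the three injective cotorsion pairs
\[
(\class{W}_{\textnormal{co}},\dwclass{I}),\qquad (\class{W}_{\textnormal{st}},\exclass{I}),\qquad (\class{E},\dgclass{I})
\]
into the Becker--Gillespie recollement machinery. The paper does exactly this, citing \cite[Theorem~4.6]{gillespie-recollement} rather than \cite{gillespie-models-for-hocats-of-injectives}, and the corollary follows in one line once Theorem~\ref{them-exact Inj model structure} is in hand.

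Where you go astray is in what you think must be verified. The hypothesis of \cite[Theorem~4.6]{gillespie-recollement} is simply that all three are injective cotorsion pairs together with the identity $\exclass{I}=\class{E}\cap\dwclass{I}$, which is a tautology. You do \emph{not} need the nesting $\class{W}_{\textnormal{st}}\subseteq\class{E}$ (which is not obvious and in fact not what the theorem asks for), nor do you need to identify $\dwclass{I}\cap\class{W}_{\textnormal{st}}$ with anything, nor do you need Brown representability or well-generatedness to produce adjoints: the adjoints come for free from fibrant/cofibrant replacement in the model structures. Your ``main obstacle'' is therefore not an obstacle at all. Also, your parenthetical that Theorem~\ref{them-cotorsion-complexes-scheme} gives $\exclass{I}=\tilclass{I}$ is false --- $\tilclass{I}$ denotes exact complexes with injective \emph{cycles}, i.e.\ contractible complexes of injectives, which is strictly smaller than $\exclass{I}$. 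Theorem~\ref{them-cotorsion-complexes-scheme} enters only in the proof of Theorem~\ref{them-exact Inj model structure}, not again here.
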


\begin{proof}
We use the generalization given in~\cite[Theorem~4.6]{gillespie-recollement} of Becker's method from~\cite{becker}. Let $(\class{E}, \dgclass{I})$ denote the cotorsion pair where $\class{E}$ is the class of exact complexes and $\dgclass{I}$ is the class of DG-injective complexes~\cite[Corollary~7.1]{gillespie-quasi-coherent}. Then the three cotorsion pairs $$\mathfrak{M}^{inj}_{\textnormal{co}} = (\class{W}_{\textnormal{co}},\dwclass{I}) \ , \ \ \ \ \mathfrak{M}^{inj}_{\textnormal{st}}  = (\class{W}_{\textnormal{st}},\exclass{I}) \ , \ \ \ \ \mathfrak{M}^{inj} = (\class{E}, \dgclass{I})$$
satisfy the hypotheses of~\cite[Theorem~4.6]{gillespie-recollement} and immediately give the recollement. Also, see~\cite[Theorem~3.5]{gillespie-mock projectives} for a description of how the functors work when the recollement is written in the form:
$$\textnormal{Ho}(\mathfrak{M}^{inj}_{\textnormal{st}}) \xrightarrow{} \textnormal{Ho}(\mathfrak{M}^{inj}_{\textnormal{co}}) \xrightarrow{} \textnormal{Ho}(\mathfrak{M}^{inj}).$$
\end{proof}

Even without the recollement, the inclusion $S(Inj) \xrightarrow{} K(Inj)$ has a right adjoint, which takes the form of a special precover (right approximation) by an exact complex. It follows formally that $S(Inj)$ has products, although the product may not be pretty after taking an exact precover. However, the recollement now provides a left adjoint to $S(Inj) \xrightarrow{} K(Inj)$. So being a right adjoint, the inclusion $S(Inj) \xrightarrow{} K(Inj)$ preserves products. It proves the following corollary.

\begin{corollary}\label{cor-products-acyclic}
Assume $\mathbb{X}$ is a semiseparated and quasi-compact scheme. Then the inclusion $S(Inj) \xrightarrow{} K(Inj)$ preserves products. Acyclic complexes of injective sheaves are closed under products.
\end{corollary}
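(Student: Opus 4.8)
The plan is to derive both assertions formally from the recollement established in Corollary~\ref{cor-recollement of krause}, so that the real content is already behind us. Write $i_* \mathcolon S(Inj) \to K(Inj)$ for the canonical inclusion; in the recollement of Corollary~\ref{cor-recollement of krause} this is the functor realizing $S(Inj)$ as the kernel of the Verdier quotient $K(Inj) \to \class{D}(\qcox)$, and as part of a recollement it comes equipped with both a left adjoint $i^*$ and a right adjoint $i^!$. First I would note that $S(Inj)$ has all small products: either because $i^!$ applied to a product formed in $K(Inj)$ produces one, or simply because $S(Inj) = \textnormal{Ho}(\mathfrak{M}^{inj}_{\textnormal{st}})$ is the homotopy category of a model category possessing all products. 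Then, since $i_*$ has a left adjoint, it is a right-adjoint functor and therefore preserves products. This is the first assertion.

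For the second assertion I would first record the standard fact that products in $K(Inj)$ are computed degreewise. The categorical product in $\chqcox$ of a family $\{E_\alpha\}$ is the complex $P$ with $P_n = \prod^{qc}_\alpha (E_\alpha)_n$, and a product of injective sheaves is again injective: the functor $\Hom_{\mathbb{X}}(-,\prod^{qc}_\alpha I_\alpha) \cong \prod_\alpha \Hom_{\mathbb{X}}(-,I_\alpha)$ is a product of exact functors valued in $\textnormal{Ab}$, hence exact since products in $\textnormal{Ab}$ are exact. So $P$ is a complex of injectives, and since $K(Inj) \subseteq K(\mathbb{X})$ is a full additive subcategory closed under the (degreewise) products of $K(\mathbb{X})$, the complex $P$ is also the product of $\{E_\alpha\}$ formed in $K(Inj)$.

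Now take a family $\{E_\alpha\}$ of acyclic complexes of injective sheaves, viewed as objects of $S(Inj)$; let $P$ be their degreewise product, and let $Q$ be the product of $\{E_\alpha\}$ computed in $S(Inj)$. Since $i_*$ preserves products, the comparison morphism $i_* Q \to \prod_\alpha i_* E_\alpha = P$ is an isomorphism in $K(Inj)$, i.e.\ a chain homotopy equivalence $Q \simeq P$. The complex $Q$ is acyclic because it lies in $S(Inj) = \exclass{I}/\!\sim$, and homotopy equivalences induce isomorphisms in homology, so $P$ is acyclic as well. Hence the class of acyclic complexes of injective sheaves is closed under products in $\chqcox$.

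I do not expect a genuine obstacle here: once Corollary~\ref{cor-recollement of krause} is in hand, both statements are bookkeeping with adjunctions. The only point that requires a little care is keeping straight the two products --- the abstract product in $S(Inj)$, which is an exact precover of the degreewise product and need not be pretty, versus the concrete degreewise product --- and observing that preservation of products is precisely what identifies them up to homotopy.
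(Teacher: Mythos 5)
Your argument is correct and matches the paper's own primary argument, which derives both assertions formally from the left adjoint supplied by the recollement of Corollary~\ref{cor-recollement of krause}. The paper also records an alternative direct proof, showing $\prod^{qc} E_i \in \rightperp{\class{T}_{st}} = \exclass{I}$ via the canonical monomorphism $\Ext^1_{\mathbb{X}}\bigl(T, \prod^{qc} E_i\bigr) \to \prod \Ext^1_{\mathbb{X}}(T, E_i)$ for $T$ in the cogenerating set $\class{T}_{st}$, but your adjoint-based route is exactly the one the paper gives first.
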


We also have the following proof of the corollary.

\begin{proof}
Let $\{E_i\}$ be a collection of exact chain complexes of injective sheaves. We want to show that $\prod^{qc} E_i \in \rightperp{\class{T}_{st}}$. But given an object $T \in \class{T}_{st}$, there is a canonical monomorphism
$$\Ext^{1}_{\mathbb{X}}(T,\prod^{qc} E_i ) \xrightarrow{} \prod \Ext^{1}_{\mathbb{X}}(T, E_i ).$$
This canonical monomorphism always exists for abelian categories with products, see~\cite[Prop.~A.1]{stovicek-cotilting-Noetherian-schemes}.
Since the product on the right side is 0, we conclude $$\prod^{qc} E_i \in \exclass{I}.$$
\end{proof}

\section{Totally acyclic complexes of injectives and Gorenstein injectives}\label{sec-totally-acyclic-inj}

Assume $\mathbb{X}$ is a semiseparated Noetherian scheme.  The main point we wish to make here is that the Gorenstein injectives are the fibrant objects of a cofibrantly generated model structure on $\qcox$, and Quillen equivalent to one on $\chqcox$. Much of this essentially goes back to~\cite[Theorem~7.12]{krause-stable derived cat of a Noetherian scheme}. We wish to show here that the homotopy categories expand to include all objects of $\qcox$ and $\chqcox$, their models are cofibrantly generated, and lastly they are connected by a Quillen equivalence.

\subsection{A model for the homotopy category of totally acyclic complexes of injectives}
We wish to put a cofibrantly generated model structure on $\qcox$ whose homotopy category is $K_{to}(Inj)$, the homotopy category of \emph{totally acyclic complexes of injectives}. That is, exact complexes $X$ of injectives for which $\Hom_{\mathbb{X}}(I,X)$ remains exact for all injective sheaves $I$. Note that this is the categorical $\Hom_{\mathbb{X}}$ landing in abelian groups.

\begin{theorem}\label{them-totally Inj model structure}
Assume $\mathbb{X}$ is a semiseparated Noetherian scheme. Let $\toclass{I}$ denote the class of all totally acyclic complexes of injectives. There is an injective abelian model structure $$\mathfrak{M}^{inj}_{\textnormal{to}} = (All, \class{W}_{\textnormal{to}}, \toclass{I})$$ on $\chqcox$.
%, the \textbf{exact Inj model structure},
Its homotopy category is equivalent to $K_{to}(Inj)$, the chain homotopy category of all totally acyclic complexes of injectives. That is, $\textnormal{Ho}(\mathfrak{M}^{inj}_{\textnormal{to}}) \cong K_{to}(Inj)$ and it is a well-generated triangulated category.
\end{theorem}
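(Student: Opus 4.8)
The plan is to build the injective cotorsion pair $(\class{W}_{\textnormal{to}}, \toclass{I})$ by cogenerating it with an explicit set, exactly mirroring the proof of Theorem~\ref{them-exact Inj model structure}, but with the set of generating short exact sequences enlarged so that the right orthogonal picks out the \emph{totally} acyclic complexes of injectives rather than merely the acyclic ones. First I would fix a generating set $\{U_i\}$ of flat sheaves (available since $\mathbb{X}$ is semiseparated and quasi-compact, by~\cite[Appendix~A, Lemma~1]{efimov-positselski}), and additionally recall that, $\mathbb{X}$ being Noetherian, $\qcox$ is locally noetherian, so there is a set $\{E_j\}$ of indecomposable injectives with every injective sheaf a direct sum (or direct limit) of copies of these. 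I would then take $\class{T}_{\textnormal{to}}$ to be the set $\class{T}_{st}$ from Theorem~\ref{them-exact Inj model structure} together with the extra set $\{D^n(E_j) \,|\, n \in \Z, j\}$, coming from the short exact sequences $0 \to S^n(E_j) \to D^{n+1}(E_j) \to S^{n+1}(E_j) \to 0$ (equivalently the ``disk'' relations $\Ext^1_{\chqcox}(D^n(E_j), Y)=0 \iff Y_n$-component of $\Hom_{\mathbb{X}}(E_j, Y)$ is ``surjective onto cycles'').

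\textbf{Identifying the right orthogonal.} The key computation is to show $\rightperp{\class{T}_{\textnormal{to}}} = \toclass{I}$. From the $\class{T}_{st}$ part we already know (by the argument in Theorem~\ref{them-exact Inj model structure}, which again uses Theorem~\ref{them-cotorsion-complexes-scheme} crucially to kill $\Ext^1_{\mathbb{X}}(U_i, Z_nY)$) that $Y \in \rightperp{\class{T}_{st}}$ iff $Y$ is an acyclic complex of injectives. It then remains to show that, for such a $Y$, the vanishing $\Ext^1_{\chqcox}(D^{n}(E_j), Y) = 0$ for all $n,j$ is equivalent to $\Hom_{\mathbb{X}}(E_j, Y)$ being acyclic for all $j$, and that this in turn is equivalent to $Y$ being totally acyclic. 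The first equivalence is the standard adjunction/disk computation: $\Ext^1_{\chqcox}(D^n(E_j),Y)=0$ forces the map $\Hom_{\chqcox}(D^n(E_j),Y) \to \Hom_{\chqcox}(S^{n-1}(E_j),Y)$ (from $0\to S^{n-1}(E_j)\to D^n(E_j)\to S^n(E_j)\to 0$, noting $Y$ is already acyclic so the $S^n$-part is controlled) to be epi, which unwinds to exactness of $\Hom_{\mathbb{X}}(E_j, Y)$ in the appropriate degree. The second equivalence — passing from the indecomposable $E_j$ to all injective sheaves $I$ — uses that every injective $I$ is a direct sum of the $E_j$ and that $\Hom_{\mathbb{X}}(-, Y)$ converts that direct sum into a product of complexes, which is exact iff each factor is; this is precisely where the Noetherian (locally noetherian) hypothesis is essential, and it is the main structural point of the whole theorem.

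\textbf{Producing the model structure.} Once $\rightperp{\class{T}_{\textnormal{to}}} = \toclass{I}$ is established, the set $\class{T}_{\textnormal{to}}$ (which contains a set of generators of $\qcox$, namely the $S^n(U_i)$ and $D^n(U_i)$) cogenerates a complete cotorsion pair $(\class{W}_{\textnormal{to}}, \toclass{I})$ with $\class{W}_{\textnormal{to}} = \leftperp{\toclass{I}}$, by~\cite[Corollary~2.15(2)]{saorin-stovicek}. To apply~\cite[Proposition~3.3]{bravo-gillespie-hovey} I must check that $\class{W}_{\textnormal{to}}$ is thick and contains all categorically injective (contractible) complexes. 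Both are verified exactly as in Theorem~\ref{them-exact Inj model structure}: since $\toclass{I}$ consists of complexes of injectives and is closed under suspension, Lemma~\ref{lemma-homcomplex-basic-lemma} gives $W \in \class{W}_{\textnormal{to}}$ iff $\homcomplex_{\mathbb{X}}(W,E)$ is exact for all $E \in \toclass{I}$; the functor $\homcomplex_{\mathbb{X}}(-,E)$ is exact on short exact sequences when $E$ is degreewise injective, so the two-out-of-three property for exact complexes of abelian groups gives thickness, and contractible $W$ obviously lie in $\class{W}_{\textnormal{to}}$. This yields the injective abelian model structure $\mathfrak{M}^{inj}_{\textnormal{to}} = (All, \class{W}_{\textnormal{to}}, \toclass{I})$. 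Cofibrant generation follows because the cotorsion pair is cogenerated by a set containing generators, hence has a set of generating monomorphisms~\cite{saorin-stovicek, hovey}, and well-generatedness then follows from~\cite{rosicky-brown representability combinatorial model srucs} since we have a cofibrantly generated model structure on a locally presentable pointed category. Finally, the identification $\textnormal{Ho}(\mathfrak{M}^{inj}_{\textnormal{to}}) \cong K_{to}(Inj)$ is obtained verbatim as in Theorem~\ref{them-exact Inj model structure}: the homotopy category is $\toclass{I}/\!\sim$, and by~\cite[Corollary~4.8]{gillespie-exact model structures} together with~\cite[Lemma~5.1]{gillespie-hereditary-abelian-models} the formal homotopy relation coincides with ordinary chain homotopy, since categorically injective complexes are contractible and contractible complexes are trivial.

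\textbf{Main obstacle.} The only genuinely delicate point is the reduction ``$\Hom_{\mathbb{X}}(E_j, Y)$ exact for all indecomposable injectives $E_j$'' $\Longrightarrow$ ``$\Hom_{\mathbb{X}}(I, Y)$ exact for all injectives $I$'', i.e., making the passage from a \emph{set} of test objects to the whole class of injectives. This is exactly what forces the Noetherian hypothesis (via Matlis/local-noetherian theory: injectives are direct sums of indecomposables and $\Hom$ out of a direct sum is a product, which preserves exactness of complexes of abelian groups). Everything else is a faithful transcription of the proof of Theorem~\ref{them-exact Inj model structure}.
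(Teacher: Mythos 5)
Your construction has a genuine gap: the extra generators you add to $\class{T}_{st}$ should be the \emph{spheres} $S^n(E_j)$, not the disks $D^n(E_j)$. For any complex $Y$ of injectives, the standard disk computation gives $\Ext^1_{\chqcox}(D^n(E_j), Y) \cong \Ext^1_{\mathbb{X}}(E_j, Y_n)$, and this vanishes automatically because $Y_n$ is injective. So the disks $D^n(E_j)$ impose no condition at all beyond what $\class{T}_{st}$ already gives, and your $\rightperp{\class{T}_{\textnormal{to}}}$ is just $\exclass{I}$, the class of \emph{all} acyclic complexes of injectives, rather than $\toclass{I}$. The informal claim in your proposal, ``$\Ext^1_{\chqcox}(D^n(E_j), Y) = 0 \iff$ the degree-$n$ component of $\Hom_{\mathbb{X}}(E_j, Y)$ is surjective onto cycles,'' is not correct: that surjectivity is detected by $\Ext^1_{\chqcox}(S^n(E_j), Y) = 0$, once one already knows each $Y_n$ is injective (which is exactly when $\Ext^1_{\chqcox}(D^n(E_j), Y) = 0$ for free). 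Likewise, the parenthetical ``noting $Y$ is already acyclic so the $S^n$-part is controlled'' is not controlled: for acyclic $Y$ one has $\Ext^1_{\chqcox}(S^n(E_j), Y) \cong \Ext^1_{\mathbb{X}}(E_j, Z_nY)$, and this need not vanish --- its vanishing is precisely the total acyclicity condition you are trying to impose, not something you can deduce from acyclicity alone.

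Once you replace $D^n(E_j)$ by $S^n(E_j)$ (the paper takes $\class{T}_{to} = \class{T}_{st} \cup \{S^n(J(x)) \mid n \in \Z,\ x \in \mathbb{X}\}$), the remainder of your argument --- the reduction from indecomposable injectives to all injectives via direct sums, using that products of exact complexes of abelian groups are exact; the thickness of $\class{W}_{\textnormal{to}}$ via Lemma~\ref{lemma-homcomplex-basic-lemma}; the applications of~\cite[Corollary~2.15(2)]{saorin-stovicek} and~\cite[Proposition~3.3]{bravo-gillespie-hovey}; cofibrant generation and well-generation; and the identification $\textnormal{Ho}(\mathfrak{M}^{inj}_{\textnormal{to}}) \cong K_{to}(Inj)$ --- goes through exactly as in the paper's proof of Theorem~\ref{them-exact Inj model structure}.
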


\begin{proof}
Since $\mathbb{X}$ is semiseparated and quasi-compact, we may continue with the notation from the proof of Theorem~\ref{them-exact Inj model structure}. So  $\{U_i\}$ denotes a set of flat generators for $\qcox$, and we have
\[
\class{T}_{st} = \{D^{n} (U_i/C) \,|\, n\in\Z, U_i \in \{U_i\}, \forall C \subseteq U_i\} \bigcup \{S^{n} (U_i) \,|\, n\in\Z, U_i \in \{U_i\}  \}.
\]
Note that by~\cite[Prop.~II.7.17]{hartshorne-residues and duality}, see also~\cite[Appendix~B]{stovicek-cotilting-Noetherian-schemes}, there is a set $\{J(x)\}_{x\in \mathbb{X}}$ of indecomposable injective sheaves; every injective sheaf $I$ is a direct sum of sheaves $J(x)$ for various $x$ in the underlying space of $\mathbb{X}$.
So then a complex $X$ is a totally acyclic complex of injectives if and only if $X$ is an exact complex of injectives for which $\Hom_{\mathbb{X}}(J(x),X) = \homcomplex_{\mathbb{X}}(S^0J(x),X)$ is exact for all $x \in \mathbb{X}$. (Since this is the categorical $\Hom$, we are using that any direct product of exact complexes of \emph{abelian groups} is again exact.) We now set
\[
\class{T}_{to} = \class{T}_{st} \,\bigcup \,\{S^{n}(J(x)) \,|\, n\in\Z, x \in \mathbb{X}  \}.
\]
So $\rightperp{\class{T}_{to}}$ consists precisely of the exact complexes of injectives $X$ for which $$\Ext^1_{\chqcox}(S^nJ(x),X) = 0.$$ Since $X$ is a complex of injectives, this is equivalent to saying that $X$ is an exact complex of injectives for which $\Hom_{\mathbb{X}}(J(x), X)$ is exact. As noted, such $X$ are precisely the totally acyclic complexes of injectives. So $\class{T}_{to}$ cogenerates $(\class{W}_{to},\toclass{I})$.
The remaining statements are proved in the exact same way that we proved Theorem~\ref{them-exact Inj model structure}. Simply replace   $\class{W}_{st}$, $\exclass{I}$, and $S(Inj)$ in that proof with $\class{W}_{to}$, $\toclass{I}$, and $K_{to}(Inj)$.
\end{proof}

\subsection{Gorenstein injectives}
The model structure of Theorem~\ref{them-totally Inj model structure} passes to a Quillen equivalent one on $\qcox$.
We say that a sheaf $M$ is \textbf{Gorenstein injective} if
$M=Z_{0}I$ for some totally acyclic complex $I$ of injectives. We let $\class{GI}$ denote the class of all Gorenstein injective sheaves  and set $\class{W}=\leftperp{\class{GI}}$.

\begin{theorem}\label{them-G-inj}
Assume $\mathbb{X}$ is a semiseparated Noetherian scheme. Then the triple
$$\mathfrak{M}^{inj}_{\textnormal{G}} = (All, \class{W}, \class{GI})$$ is a cofibrantly generated abelian model structure on $\qcox$. We call it the \textbf{Gorenstein injective model structure}. Moreover, the functor
$$S^0(-) : \mathfrak{M}^{inj}_{\textnormal{G}}  \xrightarrow{} \mathfrak{M}^{inj}_{\textnormal{to}}$$ is a (left) Quillen equivalence with the cycle functor $Z_0$ as its (right adjoint) Quillen inverse.
\end{theorem}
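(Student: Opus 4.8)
The plan is to first establish that $(\class{W}, \class{GI})$ is a complete cotorsion pair on $\qcox$ with $\class{W}$ thick, so that it determines an injective abelian model structure by the usual criterion \cite[Proposition~3.3]{bravo-gillespie-hovey}, and then to transport cofibrant generation and the Quillen equivalence from $\mathfrak{M}^{inj}_{\textnormal{to}}$ via the adjoint pair $(S^0, Z_0)$. The key observation linking the two model structures is that $S^0 \colon \qcox \to \chqcox$ is exact and its right adjoint $Z_0$ sends $\toclass{I}$ into $\class{GI}$ by the very definition of Gorenstein injective; conversely $S^0$ applied to a cofibrant-trivial object should land in $\class{W}_{\textnormal{to}}$. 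So the strategy is: (i) derive the cotorsion pair $(\class{W}, \class{GI})$ on $\qcox$ by applying $Z_0$ to (equivalently, pulling back along $S^0$) the cotorsion pair $(\class{W}_{\textnormal{to}}, \toclass{I})$ already constructed in Theorem~\ref{them-totally Inj model structure}; (ii) check thickness of $\class{W}$ and that $\class{W}$ contains all injectives; (iii) invoke \cite[Proposition~3.3]{bravo-gillespie-hovey}; (iv) transfer cofibrant generation; (v) verify the Quillen equivalence directly.

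For step (i), I would use that every injective sheaf $I$ is a direct sum of indecomposable injectives $J(x)$ (as recalled in the proof of Theorem~\ref{them-totally Inj model structure}), and more importantly that there is a \emph{set} of generating monomorphisms for $(\class{W}_{\textnormal{to}}, \toclass{I})$ coming from $\class{T}_{to}$. Applying $Z_0$ to the sources and targets of those maps, or directly taking the set $\{U_i/C\} \cup \{J(x)\}$ of sheaves obtained by applying $Z_0$ to $\class{T}_{to}$, should cogenerate a complete cotorsion pair on $\qcox$ whose right-hand class is exactly $\class{GI}$: indeed $\Ext^1_{\mathbb{X}}(J(x), M) = 0$ for all $x$ characterizes injective-cotorsion conditions, and combined with a generating set this forces $M$ to be a cycle of a totally acyclic complex of injectives. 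Here I would lean on the standard argument that a sheaf with $\Ext^1_{\mathbb{X}}(J(x),-) = 0$ and membership in $\rightperp{\{U_i/C\}}$ admits both a right injective resolution and a left injective coresolution that splice to a totally acyclic complex — essentially the content of \cite[Theorem~7.12]{krause-stable derived cat of a Noetherian scheme} reinterpreted. Completeness of the cotorsion pair follows from \cite[Corollary~2.15(2)]{saorin-stovicek} as in the earlier proofs.

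For steps (ii)–(iii): thickness of $\class{W} = \leftperp{\class{GI}}$ should follow from thickness of $\class{W}_{\textnormal{to}}$ together with exactness of $S^0$ (a short exact sequence in $\qcox$ maps to one in $\chqcox$, and $\Ext^1$ against $\class{GI}$ is computed via $S^0$ landing in $\toclass{I}$); and $\class{W} \supseteq \{\text{injectives}\}$ is clear since injective sheaves are trivially in the left class of any injective cotorsion pair. Then \cite[Proposition~3.3]{bravo-gillespie-hovey} gives $\mathfrak{M}^{inj}_{\textnormal{G}} = (All, \class{W}, \class{GI})$, and cofibrant generation comes, exactly as in Theorem~\ref{them-exact Inj model structure}, from the explicit generating monomorphisms via \cite[Section~6]{hovey} and \cite{saorin-stovicek}. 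For step (v), to see $(S^0, Z_0)$ is a Quillen pair it suffices (since every object is cofibrant on the left) to check $Z_0$ preserves fibrations and trivial fibrations — and a fibration in $\mathfrak{M}^{inj}_{\textnormal{to}}$ is an epimorphism with kernel in $\toclass{I}$, whose degree-$0$ cycles form an epimorphism with kernel in $\class{GI}$; trivial fibrations have kernel a degreewise-injective complex, whose cycles are injective. For the \emph{equivalence}, I would show the derived unit and counit are isomorphisms: on a bifibrant object $X$ of $\mathfrak{M}^{inj}_{\textnormal{to}}$ (a totally acyclic complex of injectives), the counit $S^0 Z_0 X \to X$ is a weak equivalence because its mapping cone is built from the ``stupid'' truncations, which are coacyclic hence trivial; on a bifibrant object $M$ of $\mathfrak{M}^{inj}_{\textnormal{G}}$ (a Gorenstein injective sheaf), $Z_0 S^0 M = M$ on the nose, so the unit is even an isomorphism after fibrant replacement in $\mathfrak{M}^{inj}_{\textnormal{to}}$.

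The main obstacle I anticipate is step (i): proving cleanly that the right-hand class of the cotorsion pair cogenerated on $\qcox$ is \emph{exactly} $\class{GI}$, i.e. that $\Ext$-orthogonality against the small set $\{U_i/C\} \cup \{J(x)\}$ really does cut out precisely the cycles of totally acyclic complexes of injectives and nothing larger. This is where the Noetherian hypothesis and the structure theory of injectives (Matlis-style decomposition into the $J(x)$) must be used essentially, and where one needs the technical fact — morally Theorem~\ref{them-cotorsion-complexes-scheme} again, via $\dgclass{C} = \dwclass{C}$ — guaranteeing that the relevant resolutions can be assembled into a genuinely \emph{totally} acyclic complex rather than merely an acyclic one. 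Once that identification is in hand, everything else is the by-now-routine machinery of injective cotorsion pairs and Quillen transfer along $(S^0, Z_0)$.
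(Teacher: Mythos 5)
There is a genuine gap in step (i), and it is where you yourself flag the difficulty. The proposed cogenerating set on $\qcox$ does not cut out $\class{GI}$: applying $Z_0$ to $\class{T}_{to}$ yields (up to zero objects) the set $\{U_i/C\} \cup \{U_i\} \cup \{J(x)\}$, and by Baer's criterion $\rightperp{\{U_i/C : C \subseteq U_i\}}$ is precisely the class of \emph{injective} sheaves. So the cotorsion pair cogenerated by your proposed set is $(\text{All}, \text{Inj})$, whose right side is strictly smaller than $\class{GI}$. The reason the na\"ive transport fails is structural: in $\chqcox$ the disks $D^n(U_i/C)$ enforce only \emph{degreewise} injectivity, and the cycles of a degreewise injective complex are generically not injective; but the $Z_0$-image of a disk is a plain sheaf $U_i/C$, and orthogonality against those forces injectivity of the sheaf itself. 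The two cotorsion pairs are therefore not related by simply pushing the cogenerating set across the adjunction $(S^0, Z_0)$.

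The paper handles step (i) quite differently. It does not attempt to build an explicit cogenerating set from $\class{T}_{to}$ at all: it cites Lemmas~3.4/3.5 of Estrada--Iacob (\emph{Gorenstein injective sheaves}) for the fact that $(\leftperp{\class{GI}}, \class{GI})$ is a complete hereditary cotorsion pair, which already gives the abelian model structure via \cite[Proposition~3.3]{bravo-gillespie-hovey}. Cofibrant generation is then obtained by adapting the cardinality/filtration machinery of \cite[Section~7]{gillespie-models-for-hocats-of-injectives}: that argument assumed a set of generators of finite projective dimension, but the paper observes that once one knows $\dgclass{C} = \dwclass{C}$ (Theorem~\ref{them-cotorsion-complexes-scheme}) one may substitute the flat generators $\{U_i\}$, since $\{U_i\} \subseteq \class{W}$, and the rest of the argument goes through unchanged. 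Finally, the Quillen equivalence is obtained by generalizing \cite[Theorem~5.8]{bravo-gillespie-hovey}. Your step (v), checking that $Z_0$ preserves (trivial) fibrations and that the derived unit and counit are isomorphisms, is correct in spirit and is essentially how that cited argument runs, so that part of your proposal survives; but it presupposes the model structure whose existence your step (i) was supposed to supply.
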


\begin{proof}
The root of this idea goes back to~\cite[Theorem~7.12]{krause-stable derived cat of a Noetherian scheme}. However, it takes some more work to show that we get a complete hereditary cotorsion pair, cogenerated by a set. It is shown in~\cite[Lemmas~3.4/3.5]{estrada-iacob-frontiers-china}   that the Gorenstein injective sheaves form the right side of a complete hereditary cotorsion pair. It is then automatic that we get the model structure. We would still like to see that it is cofibrantly generated. For this, we note that we can imitate the method of~\cite[Section~7]{gillespie-models-for-hocats-of-injectives}. The techniques there are based on~\cite{bravo-gillespie-hovey}. However there is one point of confusion --- Gillespie proved the analog of the current theorem but by assuming the category had a set of noetherian generators of finite projective dimension. In hindsight, the notherian generators may be distinct from the generators $\{G_i\}$ of finite projective dimension in~\cite[Section~7 Setup]{gillespie-models-for-hocats-of-injectives}. Moreover, since we now know $\dgclass{C} = \dwclass{C}$, we may discard this set of generators completely and replace it with the set $\{U_i\}$ of flat generators. Indeed $\{U_i\} \subseteq \class{W}$ which now replaces~\cite[Lemma~7.2]{gillespie-models-for-hocats-of-injectives}. With these adjustments made, everything in~\cite[Section~7]{gillespie-models-for-hocats-of-injectives} will now carry over to prove the current theorem. The Quillen equivalence argument from~\cite[Theorem~5.8]{bravo-gillespie-hovey} also generalizes.
\end{proof}

\section{F-totally acyclic complexes  and Gorenstein flats}\label{sec-F-totally}

We are still assuming $\mathbb{X}$ be any semiseparated Noetherian scheme. In this section we show that the Gorenstein flat model structure on $\qcox$, from~\cite{cet-G-flat-stable-scheme}, is Quillen equivalent to the F-totally acyclic model structure on $\chqcox$, from~\cite{estrada-gillespie-coherent-schemes}. We previously only had a proof of this in the affine case, for coherent rings. But that proof relied on the existence of an equivalent projective model structure (the Ding projectives) on $R$-Mod; see~\cite[Theorem~5.1]{estrada-gillespie-coherent-schemes}. A proof avoiding projectives eluded us while writing~\cite{estrada-gillespie-coherent-schemes}. However, we now know $\dgclass{C} = \dwclass{C}$, and this is a key ingredient to the proof below.

First, we recall the two model structures.

\begin{theorem}\cite[Theorem~2.5]{cet-G-flat-stable-scheme}\label{them-G-flat}
Assume $\mathbb{X}$ is a semiseparated Noetherian scheme. Let $\class{GF}$ denote the class of Gorenstein flat sheaves and $\class{C}$ the class of cotorsion sheaves. Then there  is a cofibrantly generated abelian model structure
$$\mathfrak{M}^{flat}_{\textnormal{G}} = (\class{GF}, \class{V}, \class{C})$$ on $\qcox$. We call it the \textbf{Gorenstein flat model structure}.
We have an equivalence of triangulated categories $$\textnormal{Ho}(\mathfrak{M}^{flat}_{\textnormal{G}}) \simeq \textnormal{St}(G\textnormal{-}Flat\textnormal{-}Cot).$$
\end{theorem}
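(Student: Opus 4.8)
The plan is to obtain $\mathfrak{M}^{flat}_{\textnormal{G}}$ through Hovey's correspondence between abelian model structures and compatible pairs of complete cotorsion pairs, assembling it from the classical flat cotorsion pair $(\class{F},\class{C})$ on $\qcox$ and the \emph{Gorenstein flat cotorsion pair} $(\class{GF},\class{GF}^{\perp})$. On a semiseparated quasi-compact scheme the flat cotorsion pair is complete and cogenerated by a set (using a flat generator $\{U_i\}$), so the substantive work lies with $(\class{GF},\class{GF}^{\perp})$ and with verifying the compatibility conditions needed to build a Hovey triple.

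First I would show that $(\class{GF},\class{GF}^{\perp})$ is a complete hereditary cotorsion pair cogenerated by a set. Completeness and set-cogeneration should follow from \emph{deconstructibility} of $\class{GF}$: the Noetherian hypothesis makes $\qcox$ locally noetherian, hence locally finitely presentable, and F-total acyclicity of a complex of flats can be detected by tensoring with the (set of indecomposable) injective sheaves; this yields a cardinality bound and a Hill lemma filtration argument showing every Gorenstein flat sheaf is a transfinite extension of members of a set $\mathcal{S}$ of small Gorenstein flats, so the Eklof--Trlifaj / Saor{\'{\i}}n--{\v{S}}\v{t}ov{\'{\i}}{\v{c}}ek machinery produces the complete cotorsion pair. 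That the pair is hereditary, i.e. $\class{GF}$ is resolving, follows from the flats lying in $\class{GF}$ together with the closure properties of F-totally acyclic complexes of flats and a dimension shift.

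The step I expect to be the main obstacle is the \emph{core identity} $\class{GF}\cap\class{GF}^{\perp}=\class{F}\cap\class{C}$, i.e. that a Gorenstein flat sheaf which also lies in $\class{GF}^{\perp}$ is exactly a flat cotorsion sheaf. The inclusion $\supseteq$ reduces to the vanishing $\Ext^1_{\mathbb{X}}(G,X)=0$ for $G$ Gorenstein flat and $X$ flat cotorsion, obtained by resolving $G$ on the left by the flats appearing in an F-totally acyclic complex and using cotorsion of $X$ at each stage. For $\subseteq$, given $X\in\class{GF}\cap\class{GF}^{\perp}$ write $X=Z_0F$ with $F$ an F-totally acyclic complex of flats; all cycles of $F$ are Gorenstein flat, so $X\in\class{GF}^{\perp}$ forces the relevant short exact sequences of $F$ to split near degree $0$, whence $X$ is a summand of a flat, hence flat, and it is cotorsion since $\class{GF}^{\perp}\subseteq\class{F}^{\perp}=\class{C}$. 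Making this rigorous on a non-affine $\mathbb{X}$ is where a genuinely new argument is needed --- either the scheme analogue of the {\v{S}}aroch--{\v{S}}\v{t}ov{\'{\i}}{\v{c}}ek results on Gorenstein flat modules, or a reduction to the affine case along an affine open cover using flat base change, descent of F-total acyclicity, and the module-level Gorenstein flat cotorsion pair.

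With both $(\class{GF},\class{GF}^{\perp})$ and $(\class{F},\class{C})$ complete hereditary cotorsion pairs satisfying $\class{F}\subseteq\class{GF}$, $\class{GF}^{\perp}\subseteq\class{C}$, and the core identity, Gillespie's theorem constructing a Hovey triple from two cotorsion pairs produces a unique thick class $\class{V}$ with $\mathfrak{M}^{flat}_{\textnormal{G}}=(\class{GF},\class{V},\class{C})$ a hereditary abelian model structure, $\class{GF}\cap\class{V}=\class{F}$ and $\class{V}\cap\class{C}=\class{GF}^{\perp}$; cofibrant generation follows since both defining cotorsion pairs are cogenerated by sets in the Grothendieck category $\qcox$. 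Finally, the homotopy category is read off from the general theory of hereditary abelian model structures: the bifibrant objects are $\class{GF}\cap\class{C}$, the Gorenstein flat cotorsion sheaves, which form a Frobenius exact category whose projective--injective objects are $\class{GF}\cap\class{V}\cap\class{C}=\class{F}\cap\class{C}$, the flat cotorsion sheaves; hence $\textnormal{Ho}(\mathfrak{M}^{flat}_{\textnormal{G}})$ is the associated stable category $\textnormal{St}(G\textnormal{-}Flat\textnormal{-}Cot)$.
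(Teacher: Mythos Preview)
The paper does not actually prove this statement: Theorem~\ref{them-G-flat} is stated with the citation \cite[Theorem~2.5]{cet-G-flat-stable-scheme} and no proof is given, since it is being recalled from the literature as one of the two previously known model structures that Section~\ref{sec-F-totally} then connects via a Quillen equivalence. So there is nothing in the present paper to compare your proposal against.

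That said, your outline is essentially the correct strategy and is in the spirit of what \cite{cet-G-flat-stable-scheme} does: one feeds the flat cotorsion pair $(\class{F},\class{C})$ and the Gorenstein flat cotorsion pair $(\class{GF},\class{GF}^{\perp})$ into the Gillespie/Hovey machinery, with the core identity $\class{GF}\cap\class{GF}^{\perp}=\class{F}\cap\class{C}$ as the decisive input. You are right that this core identity is the crux in the non-affine setting; in \cite{cet-G-flat-stable-scheme} it is handled precisely by the kind of local-to-global reduction you allude to, using that Gorenstein flatness is an affine-local notion (their Theorem~1.6, also invoked later in the present paper in Lemma~\ref{lemma-Gorenstein scheme}) together with the module-level {\v{S}}aroch--{\v{S}}\v{t}ov{\'{\i}}{\v{c}}ek results. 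Your identification of the homotopy category via the Frobenius structure on bifibrant objects is also the standard one; note that the inclusion $\class{F}\cap\class{C}\subseteq\class{GF}^{\perp}$ you use is exactly \cite[Lemma~2.3]{cet-G-flat-stable-scheme}, which the present paper cites several times.
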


\begin{theorem}\cite[Theorem~1.2]{estrada-gillespie-coherent-schemes}\label{them-F-flat}
Assume $\mathbb{X}$ is a semiseparated Noetherian scheme. Let ${}_I\tilclass{F}$ denote the class of F-totally acyclic complexes of flat sheaves and $\dwclass{C}$ the class of all complexes of cotorsion sheaves. Then there  is a cofibrantly generated abelian model structure
$$\mathfrak{M}^{flat}_{\textnormal{F-to}} = ({}_I\tilclass{F}, \class{V}_{\textnormal{to}}, \dwclass{C})$$ on $\chqcox$. We call it the \textbf{F-totally acyclic flat model structure}. We have an equivalence of triangulated categories $$\textnormal{Ho}(\mathfrak{M}^{flat}_{\textnormal{F-to}}) \simeq K_{F\textnormal{-}to}(Flat\textnormal{-}Cot).$$
\end{theorem}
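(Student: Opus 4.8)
The plan is to realize $\mathfrak{M}^{flat}_{\textnormal{F-to}}$ as the abelian model structure (Hovey triple) assembled from two compatible complete cotorsion pairs on $\chqcox$: a ``small'' trivial one with $\dwclass{C}$ on the right, and a new one with ${}_I\tilclass{F}$ on the left. The trivial cotorsion pair is handed to us for free: it is $(\tilclass{F},\dwclass{C})$, which is already complete, being the pair $(\class{C}\cap\class{W},\class{F})$ underlying the flat contraderived model structure of Theorem~\ref{them-contra-model-scheme}; the identification $\rightperp{(\tilclass{F})}=\dgclass{C}=\dwclass{C}$ is precisely the content of Theorem~\ref{them-cotorsion-complexes-scheme}. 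It is hereditary because the flat quasi-coherent sheaves are closed under kernels of epimorphisms (a one-line $\Tor$ computation: in a short exact sequence $0\to A\to B\to C\to 0$ with $B,C$ flat, $\Tor_1(M,A)$ is sandwiched between $\Tor_2(M,C)=0$ and $\Tor_1(M,B)=0$). In the final model structure this pair plays the role of $(\class{C}\cap\class{W},\class{F})$, so $\dwclass{C}$ is the class of fibrant objects --- the same fibrant class as in $\mathfrak{M}^{flat}_{\textnormal{ctr}}$.

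The real work is the cotorsion pair $({}_I\tilclass{F},\rightperp{({}_I\tilclass{F})})$. I would prove that ${}_I\tilclass{F}$ is a \emph{deconstructible} class, so that by the Eklof--Trlifaj theorem in the form of~\cite[Cor.~2.15]{saorin-stovicek} it is the left half of a complete cotorsion pair cogenerated by a set. This is the step where the Noetherian hypothesis is indispensable. By~\cite[Prop.~II.7.17]{hartshorne-residues and duality} (see also~\cite[Appendix~B]{stovicek-cotilting-Noetherian-schemes}) there is a \emph{set} $\{J(x)\}_{x\in\mathbb{X}}$ of indecomposable injective sheaves of which every injective sheaf is a direct sum; since $-\tensor(-)$ commutes with direct sums, the a~priori proper-class condition ``$E\tensor F$ is exact for all injective $E$'' collapses to the set-indexed condition ``$J(x)\tensor F$ is exact for all $x\in\mathbb{X}$''. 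Thus ${}_I\tilclass{F}$ is the intersection of the deconstructible classes $\dwclass{F}$ of degreewise flat complexes (cf.~\cite{gillespie-mock projectives}) and $\class{E}$ of exact complexes (cf.~\cite{gillespie-quasi-coherent}) with the acyclicity constraint imposed by $J(x)\tensor(-)$ for $x$ in a set, and such intersections are again deconstructible by the standard closure properties of (Kaplansky/deconstructible) classes. Along the way one checks that ${}_I\tilclass{F}$ is closed under extensions and under kernels of epimorphisms (again using that the degreewise terms are flat, so the relevant $\Tor_1$ vanishes and tensoring by $J(x)$ preserves short exactness), so the cotorsion pair is hereditary.

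Next I would glue the two cotorsion pairs using Hovey's correspondence between abelian model structures and compatible complete cotorsion pairs~\cite{hovey}, in the refined ``two cotorsion pairs'' form. One must verify: (i) $\tilclass{F}\subseteq{}_I\tilclass{F}$ --- an exact complex with flat syzygies stays exact after tensoring with $J(x)$ since each syzygy is flat; (ii) $\rightperp{({}_I\tilclass{F})}\subseteq\dwclass{C}$ --- immediate from (i) and $\rightperp{(\tilclass{F})}=\dwclass{C}$; and (iii) that the two ``cores'' coincide, $\tilclass{F}\cap\dwclass{C}={}_I\tilclass{F}\cap\rightperp{({}_I\tilclass{F})}$. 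For (iii): a complex $X$ in $\tilclass{F}\cap\dwclass{C}$ has $\id_X$ null-homotopic by Theorem~\ref{them-cotorsion-complexes-scheme}, hence is a contractible complex of flat-cotorsion sheaves, and every such complex lies in both ${}_I\tilclass{F}$ and $\rightperp{({}_I\tilclass{F})}$; conversely, if $X\in{}_I\tilclass{F}\cap\rightperp{({}_I\tilclass{F})}$ then $\Ext^1_{\chqcox}(\Sigma^nX,X)=0$ for every $n$ (using that ${}_I\tilclass{F}$ is closed under suspension), so since $\Ext^1_{dw}$ is a subfunctor of $\Ext^1_{\chqcox}$, Lemma~\ref{lemma-homcomplex-basic-lemma} forces $\homcomplex_{\mathbb{X}}(X,X)$ to be exact, i.e.\ $X$ is contractible and therefore in $\tilclass{F}$. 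Hovey's theorem then produces a unique thick class $\class{V}_{\textnormal{to}}$ with ${}_I\tilclass{F}\cap\class{V}_{\textnormal{to}}=\tilclass{F}$ and $\class{V}_{\textnormal{to}}\cap\dwclass{C}=\rightperp{({}_I\tilclass{F})}$, hence an abelian model structure $({}_I\tilclass{F},\class{V}_{\textnormal{to}},\dwclass{C})$; it is hereditary since both cotorsion pairs are, and cofibrantly generated because $\chqcox$ is locally presentable and both cotorsion pairs are cogenerated by sets (cf.~\cite{hovey,rosicky-brown representability combinatorial model srucs}).

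Finally, for the homotopy category: the bifibrant objects are exactly ${}_I\tilclass{F}\cap\dwclass{C}$, the F-totally acyclic complexes of flat-cotorsion sheaves, which by definition form the underlying additive category of $K_{F\textnormal{-}to}(Flat\textnormal{-}Cot)$. On bifibrant objects the formal homotopy relation coincides with ``the difference factors through an object of the core $\tilclass{F}\cap\dwclass{C}$'' (by~\cite[Cor.~4.8]{gillespie-exact model structures}, exactly as in the proof of Theorem~\ref{them-exact Inj model structure}), and since those core objects are contractible this is ordinary chain homotopy; hence $\textnormal{Ho}(\mathfrak{M}^{flat}_{\textnormal{F-to}})\cong K_{F\textnormal{-}to}(Flat\textnormal{-}Cot)$ as triangulated categories. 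The step I expect to be the genuine obstacle is the deconstructibility of ${}_I\tilclass{F}$ in the second paragraph --- making the ``$E\tensor(-)$ is acyclic'' constraint interact correctly with transfinite filtrations --- and it is precisely there that one cannot dispense with the Noetherian hypothesis and the structure theory of injective quasi-coherent sheaves; the rest is the by-now-familiar bookkeeping of abelian model structures, made clean by the identity $\dgclass{C}=\dwclass{C}$ of Theorem~\ref{them-cotorsion-complexes-scheme}.
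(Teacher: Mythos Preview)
The paper does not supply its own proof of this statement: it is quoted verbatim from~\cite[Theorem~1.2]{estrada-gillespie-coherent-schemes} as a recalled input to Section~\ref{sec-F-totally}, with the cosmetic substitution $\dgclass{C}=\dwclass{C}$ already made. So there is nothing in the present paper to compare your argument against.

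That said, your reconstruction is essentially the correct one and is the strategy used in the cited reference: assemble the Hovey triple from the hereditary complete pairs $(\tilclass{F},\dwclass{C})$ and $({}_I\tilclass{F},\rightperp{({}_I\tilclass{F})})$, the first inherited from $\mathfrak{M}^{flat}_{\textnormal{ctr}}$ and the second obtained by showing ${}_I\tilclass{F}$ is deconstructible. Two small points deserve tightening. First, ``intersections of deconstructible classes are deconstructible'' is not a general fact; what actually works here is a direct Hill-Lemma/Kaplansky argument producing $\kappa$-filtrations of any $F\in{}_I\tilclass{F}$ by $\kappa$-presentable subcomplexes still in ${}_I\tilclass{F}$, using that degreewise flatness, exactness, and exactness of each $J(x)\tensor(-)$ are all conditions preserved along such filtrations (the last because the subquotients are degreewise flat, so tensoring by $J(x)$ stays short exact). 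Second, in your core verification you assert without proof that a contractible complex $X$ of flat-cotorsion sheaves lies in $\rightperp{({}_I\tilclass{F})}$; the missing line is that for $F\in{}_I\tilclass{F}$ every extension $0\to X\to E\to F\to 0$ is degreewise split (each $F_n$ flat, each $X_n$ cotorsion), so $\Ext^1_{\chqcox}(F,X)=\Ext^1_{dw}(F,X)\cong K(\mathbb{X})(F,\Sigma X)=0$ by contractibility of $X$. With these two clarifications your outline is a complete proof.
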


After some needed lemmas, we will prove that the two model structures above are Quillen equivalent.

\begin{lemma}\label{lemma-transfinite extensions of spheres and disks}
Let $\cat{A}$ be a bicomplete abelian category and let $(\class{X},\class{Y})$ be a cotorsion pair of chain complexes in $\cha{A}$. Suppose $\class{C}$ is some given class of objects in $\cat{A}$.
\begin{enumerate}
\item If the spheres $S^n(C)$ are in $\class{X}$ whenever $C$ is in $\class{C}$, then any bounded below complex with entries in $\class{C}$ is also in $\class{X}$.
\item If the disks $D^n(C)$ are in $\class{X}$ whenever $C$ is in $\class{C}$, then any bounded above exact complex with cycles in $\class{C}$ is also in $\class{X}$.
\item If the spheres $S^n(C)$ are in $\class{Y}$ whenever $C$ is in $\class{C}$, then any bounded above complex with entries in $\class{C}$ is also in $\class{Y}$.
\item If the disks $D^n(C)$ are in $\class{Y}$ whenever $C$ is in $\class{C}$, then any bounded below exact complex with cycles in $\class{C}$ is also in $\class{Y}$.
\end{enumerate}
\end{lemma}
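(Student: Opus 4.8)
The plan is to realize each of the four complexes as a countable transfinite extension built from spheres or disks --- for parts (1) and (2) --- or, dually, as the inverse limit of a tower with surjective transition maps whose successive kernels are spheres or disks --- for parts (3) and (4). Once this is done, one simply invokes Eklof's lemma, that the left class $\class{X}$ of a cotorsion pair is closed under transfinite extensions, together with its dual statement for the right class $\class{Y}$. Throughout one uses that $\class{X} = \leftperp{\class{Y}}$ and $\class{Y} = \rightperp{\class{X}}$.

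For (1), write $X_n = 0$ for $n < k$ with each $X_n \in \class{C}$, and take the brutal truncations $F^m X$, where $(F^m X)_n = X_n$ for $k \le n \le m$ and $(F^m X)_n = 0$ otherwise. These are subcomplexes of $X$ (the potentially troublesome differential out of degree $k$ is zero because $X_{k-1} = 0$), they form a continuous chain with $F^{k-1}X = 0$ and union $X$, and $F^m X / F^{m-1}X \cong S^m(X_m) \in \class{X}$ by hypothesis; Eklof's lemma gives $X \in \class{X}$. Part (3) is entirely dual: for $X$ bounded above with entries in $\class{C}$, the brutal truncations keeping degrees $\ge k$ are now \emph{quotient} complexes of $X$, forming a tower with surjective transition maps, inverse limit $X$, and successive kernels $S^k(X_k) \in \class{Y}$, so the dual of Eklof's lemma gives $X \in \class{Y}$.

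Part (2) is where the exactness hypothesis enters. Suppose $X$ is exact, $X_n = 0$ for $n > m$, and every cycle $Z_n X$ lies in $\class{C}$. Exactness forces $Z_m X = 0$, so $d_m$ is monic with image $Z_{m-1}X$; thus $d_m$ identifies $D^m(Z_{m-1}X)$ with a subcomplex $L^0 \subseteq X$, and a short diagram chase shows $X/L^0$ is again exact, bounded above by $m-1$, with the same lower cycles --- hence of exactly the same type, one degree shorter on top. Iterating yields a continuous chain $0 = L^{-1} \subseteq L^0 \subseteq L^1 \subseteq \cdots$ with $\bigcup_j L^j = X$ and each $L^j/L^{j-1}$ a disk $D^n(Z_{n-1}X)$ (the index $n$ descending from $m$), all in $\class{X}$; Eklof's lemma finishes it. Part (4) is dual: for $X$ exact and bounded below by $k$, exactness makes $d_{k+1}$ epic, so $X$ surjects onto $D^{k+1}(Z_k X)$ with kernel again exact, bounded below by $k+1$, with cycles in $\class{C}$; this produces a decreasing filtration $X = G^0 \supseteq G^1 \supseteq \cdots$ with $\bigcap_j G^j = 0$ and each $G^{j-1}/G^j$ a disk of cycles, so the tower of quotients $X/G^j$ has surjective transitions with successive kernels in $\class{Y}$, and the dual Eklof lemma gives $X \in \class{Y}$.

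The main obstacle is getting parts (2) and (4) exactly right: one must verify that removing the top (resp.\ bottom) disk really does leave an exact complex of the same type, so that the recursion continues, and that the resulting chain is continuous with union (resp.\ intersection) all of $X$ --- this is precisely where one-sided boundedness is used, and it is what keeps the filtration indexed by $\N$ rather than a longer ordinal. Parts (1) and (3) are routine by comparison, using only naive truncations. If one prefers not to cite Eklof's lemma, each of its uses can alternatively be replaced by the elementary observation that in the Milnor ($\lim^1$) exact sequence computing $\Ext^1_{\cha{A}}$ of the relevant (co)limit, the pertinent tower of $\Hom$-groups has surjective transition maps --- because its successive kernels lie in the appropriate half of the cotorsion pair, so the obstructing $\Ext^1$ vanishes --- whence the $\lim^1$ term is zero.
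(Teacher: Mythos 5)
Your proof is correct and follows essentially the same route as the paper: realize the bounded complexes as countable transfinite extensions of spheres (for (1)) or disks (for (2)), or dually as inverse limits of towers with surjective transitions and sphere/disk kernels (for (3), (4)), then invoke Eklof's lemma and its dual (Trlifaj's lemma, as generalized in Holm--J{\o}rgensen). The only cosmetic difference is that the paper writes out (1) and (4) and declares (2), (3) dual, while you spell out all four.
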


\begin{proof}
Note that (1) and (3) are dual statements and (2) and (4) are dual. We will prove (1) and (4). For (1), suppose that $(X,d)$ is a bounded below complex with entries in $\class{C}$. It is easy to check that $X$ can be expressed as a transfinite extension of spheres $S^n(X_n)$, on the components $X_n$. Each $S^n(X_n)$ is in $\class{X}$ by hypothesis and so $X$ is in $\class{X}$ too by the Eklof Lemma.

Next we prove (4). Here we note that any bounded below exact complex $(X,d)$ can be expressed as an inverse transfinite extension as indicated in the diagram:
$$\begin{CD}
             @.             @.      0  @<<< Z_3X  @<<< \cdots \\
  @.         @.                    @VVV         @VVV      \\
             @.        0     @<<<    Z_2X    @<<d< X_3  @= \cdots \\
  @.         @VVV                    @VVV         @VdVV      \\
  0           @<<<  Z_1X           @<<d<    X_2    @= X_2  @= \cdots \\
  @VVV         @VVV                    @VdVV         @VdVV      \\
  X_0 @<<d< X_1            @=  X_1           @=    X_1    @= \cdots   \\
  @|         @VdVV                    @VdVV         @VdVV      \\
  X_0 @= X_0            @=  X_0           @=    X_0    @= \cdots   \\
  @VVV         @VVV                    @VVV         @VVV      \\
  0   @.       0              @.        0              @.        0        @. \\
\end{CD}$$ Indeed note that each horizontal map in the diagram is surjective with its kernel being a disk $D^{n+1}(Z_nX)$. So $X$ is an inverse transfinite extension of the disks $D^{n+1}(Z_nX)$. The desired result now follows from a generalized version of Trlifaj's~\cite[Lemma~2.3]{trlifaj-Ext and inverse limits}, which is the dual of Eklof's Lemma. A nice treatment of the needed fact for a complete abelian category can be found in~\cite[Lemma~6.8]{holm-jorgensen-quiver-reps}.
\end{proof}

\begin{lemma}\label{lemma-bounded complexes}
The following complexes are in $\rightperp{{}_I\tilclass{F}}$.
\begin{enumerate}
\item Any bounded below exact complex with cotorsion cycles is in $\rightperp{{}_I\tilclass{F}}$.
\item Any bounded above complex of Gorenstein cotorsion sheaves is in $\rightperp{{}_I\tilclass{F}}$.
\end{enumerate}
\end{lemma}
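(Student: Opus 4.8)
The plan is to prove both parts by a duality/orthogonality argument, reducing the vanishing of $\Ext^1_{\chqcox}$ against F-totally acyclic complexes of flats to the already-established fact $\dgclass{C} = \dwclass{C}$ (Theorem~\ref{them-cotorsion-complexes-scheme}) and to Lemma~\ref{lemma-transfinite extensions of spheres and disks}. Recall that $(_I\tilclass{F}, \rightperp{_I\tilclass{F}})$ is (the left half of) the cotorsion pair underlying the F-totally acyclic model structure $\mathfrak{M}^{flat}_{\textnormal{F-to}}$ of Theorem~\ref{them-F-flat}, so $\rightperp{_I\tilclass{F}}$ is exactly the class of trivially fibrant objects there, which by that theorem consists of complexes lying in $\dwclass{C}$ together with a triviality condition. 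The key structural inputs are: (a) a disk $D^n(C)$ with $C$ cotorsion is trivially fibrant, hence in $\rightperp{_I\tilclass{F}}$; and (b) a sphere $S^n(G)$ with $G$ Gorenstein cotorsion is in $\rightperp{_I\tilclass{F}}$ — this last point is where one needs that $F$-totally acyclic complexes of flats tensored against, or Hom'd into, a Gorenstein cotorsion sheaf behave well, which ultimately traces back to the defining acyclicity of F-total acyclicity and to Theorem~\ref{them-cotorsion-complexes-scheme}.

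First I would handle part (1). Given a bounded below exact complex $X$ with cotorsion cycles, the diagram in the proof of Lemma~\ref{lemma-transfinite extensions of spheres and disks}(4) exhibits $X$ as an inverse transfinite extension of disks $D^{n+1}(Z_nX)$, each cycle object $Z_nX$ being cotorsion by hypothesis. So it suffices to check that each such disk lies in $\rightperp{_I\tilclass{F}}$ and that $\rightperp{_I\tilclass{F}}$ is closed under the relevant inverse transfinite extensions (the dual Eklof lemma, as already cited in that proof). For the disk: $\Ext^1_{\chqcox}(F, D^{n+1}(C)) \cong \Ext^1_{\mathbb{X}}(F_n, C)$ by the standard disk adjunction, and when $F \in {}_I\tilclass{F}$ each $F_n$ is a flat sheaf while $C$ is cotorsion, so this Ext group vanishes. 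Closure of a right-hand cotorsion class under inverse transfinite extensions is automatic (it is the dual Eklof lemma, the same tool invoked in Lemma~\ref{lemma-transfinite extensions of spheres and disks}), giving $X \in \rightperp{_I\tilclass{F}}$.

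For part (2), I would invoke Lemma~\ref{lemma-transfinite extensions of spheres and disks}(3): a bounded above complex with entries in a class $\class{C}$ lies in $\class{Y} = \rightperp{_I\tilclass{F}}$ as soon as the spheres $S^n(C)$ do, for $C \in \class{C}$. So the whole matter reduces to showing $S^n(G) \in \rightperp{_I\tilclass{F}}$ for $G$ Gorenstein cotorsion. Now $\Ext^1_{\chqcox}(F, S^n(G))$, for $F \in {}_I\tilclass{F}$, is computed via the sphere adjunction and the exactness of $F$; one gets a contribution from $\Ext^1_{\mathbb{X}}(Z_{n-1}F, G)$ (or equivalently $\Ext^1$ of the relevant boundary/cycle sheaf against $G$), together with a Hom-exactness condition. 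The boundary and cycle sheaves of an F-totally acyclic complex of flats are Gorenstein flat, and the defining orthogonality between Gorenstein flats and Gorenstein cotorsion sheaves (this cotorsion pair is precisely $\mathfrak{M}^{flat}_{\textnormal{G}}$ of Theorem~\ref{them-G-flat}, whose right class is $\class{C}$, but the relevant statement is that $\Ext^1_{\mathbb{X}}$ of a Gorenstein flat against a Gorenstein cotorsion sheaf vanishes) kills that term. The Hom-exactness condition follows because applying $\homcomplex_{\mathbb{X}}(F,-)$ to a complete flat resolution computing $G$ stays exact, which is where Theorem~\ref{them-cotorsion-complexes-scheme} enters — it guarantees that chain maps from exact complexes with flat cycles into complexes of cotorsion sheaves are null-homotopic.

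The main obstacle I anticipate is the second part: pinning down exactly what it means for $\Ext^1_{\chqcox}(F, S^n(G))$ to vanish when $G$ is merely Gorenstein cotorsion rather than honestly cotorsion. One must genuinely use the complete flat resolution of $G$ — Gorenstein cotorsion means $G$ is a cycle of an F-totally acyclic complex of flat-cotorsion sheaves, or admits a suitable coresolution — and combine the $\Ext$-orthogonality with the $\homcomplex_{\mathbb{X}}$-exactness supplied by Theorem~\ref{them-cotorsion-complexes-scheme}. Getting the bookkeeping right, so that the degreewise-split Ext and the Yoneda Ext are correctly related via Lemma~\ref{lemma-homcomplex-basic-lemma} and the short exact sequence $0 \to S^n(G) \to D^{n+1}(G) \to S^{n+1}(G) \to 0$, is the delicate step; everything else is a formal transfinite-extension argument that mirrors the proof of Theorem~\ref{them-exact Inj model structure}.
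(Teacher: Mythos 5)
Part (1) of your proposal matches the paper exactly: the disk adjunction $\Ext^1_{\chqcox}(F,D^{n+1}(C)) \cong \Ext^1_{\mathbb{X}}(F_n,C)$ with $F_n$ flat and $C$ cotorsion, followed by Lemma~\ref{lemma-transfinite extensions of spheres and disks}(4).

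Part (2), however, has a genuine gap. You correctly reduce to showing $S^n(G) \in \rightperp{{}_I\tilclass{F}}$ for $G$ Gorenstein cotorsion and correctly name the decisive ingredient --- the orthogonality $\Ext^1_{\mathbb{X}}(\class{GF}, \rightperp{\class{GF}}) = 0$. But then you introduce a separate ``Hom-exactness condition'' and try to derive it from Theorem~\ref{them-cotorsion-complexes-scheme}, and this is where the argument breaks. Theorem~\ref{them-cotorsion-complexes-scheme} concerns chain maps from exact complexes with \emph{flat} cycles into complexes of \emph{cotorsion} sheaves. Neither hypothesis holds here: an F-totally acyclic complex of flats has Gorenstein flat cycles (not flat ones), and $S^n(G)$ is not a complex of cotorsion sheaves when $G$ is merely Gorenstein cotorsion. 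You also write that ``Gorenstein cotorsion means $G$ is a cycle of an F-totally acyclic complex of flat-cotorsion sheaves,'' which conflates Gorenstein cotorsion ($G \in \rightperp{\class{GF}}$) with Gorenstein flat-cotorsion; no coresolution of $G$ is needed or relevant.

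What actually closes the argument is the single isomorphism $\Ext^1_{\chqcox}(F,S^n(C)) \cong \Ext^1_{\mathbb{X}}(F_n/B_nF,C)$ for an exact complex $F$, which the paper cites from \cite[Lemma~4.2]{gillespie-degreewise-model-strucs}. Since $F \in {}_I\tilclass{F}$ is exact, $F_n/B_nF \cong Z_{n-1}F$ is Gorenstein flat, so the right-hand side vanishes by the Gorenstein flat/Gorenstein cotorsion orthogonality alone; your ``Hom-exactness'' term is not a separate obstruction but is already absorbed into this isomorphism (concretely, the surjectivity of the restriction $\Hom_{\mathbb{X}}(F_n,G) \to \Hom_{\mathbb{X}}(Z_nF,G)$ also follows from $\Ext^1_{\mathbb{X}}(Z_{n-1}F,G)=0$, not from Theorem~\ref{them-cotorsion-complexes-scheme}). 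Once $S^n(G) \in \rightperp{{}_I\tilclass{F}}$ is established, Lemma~\ref{lemma-transfinite extensions of spheres and disks}(3) finishes, as you say.
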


\begin{proof}
Let $F \in {}_I\tilclass{F}$. Each $F_n$ is flat, so for any cotorsion $C$ we have $$0 = \Ext^1_{\mathbb{X}}(F_n,C) \cong \Ext^1_{\chqcox}(F,D^{n+1}(C)).$$ Hence each $D^n(C) \in \rightperp{{}_I\tilclass{F}}$, and (1) follows from Lemma~\ref{lemma-transfinite extensions of spheres and disks}(4). Secondly, each $F_n/B_nF$ is Gorenstein flat. So for any Gorenstein cotorsion $C$ we have $$0 = \Ext^1_{\mathbb{X}}(F_n/B_nF,C) \cong \Ext^1_{\chqcox}(F,S^n(C)).$$ (A proof of the isomorphism can be found in~\cite[Lemma~4.2]{gillespie-degreewise-model-strucs}.) Hence each $S^n(C) \in \rightperp{{}_I\tilclass{F}}$, and (2) follows from Lemma~\ref{lemma-transfinite extensions of spheres and disks}(3).
\end{proof}

\begin{lemma}\label{lemma-sphere}
Let $L$ be a sheaf. Then $S^0(L) \in \rightperp{{}_I\tilclass{F}}$ if and only if $L$ is Gorenstein cotorsion.
\end{lemma}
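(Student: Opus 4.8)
The plan is to prove both implications using the characterization of the class ${}_I\tilclass{F}$ together with the previous two lemmas. Recall that a sheaf $L$ is \emph{Gorenstein cotorsion} precisely when $\Ext^1_{\mathbb{X}}(G,L) = 0$ for every Gorenstein flat sheaf $G$.

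For the ``if'' direction, suppose $L$ is Gorenstein cotorsion. Then $S^0(L) \in \rightperp{{}_I\tilclass{F}}$ is immediate: it is the special case $n=0$ of Lemma~\ref{lemma-bounded complexes}(2), since a single sheaf in degree $0$ is certainly a bounded above complex of Gorenstein cotorsion sheaves. (Alternatively, one can argue directly: for $F \in {}_I\tilclass{F}$, the quotient $F_0/B_0F$ is Gorenstein flat, and the isomorphism $\Ext^1_{\chqcox}(F,S^0(L)) \cong \Ext^1_{\mathbb{X}}(F_0/B_0F,L)$ from~\cite[Lemma~4.2]{gillespie-degreewise-model-strucs} forces the left side to vanish.)

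For the ``only if'' direction, suppose $S^0(L) \in \rightperp{{}_I\tilclass{F}}$, and let $G$ be an arbitrary Gorenstein flat sheaf; we must show $\Ext^1_{\mathbb{X}}(G,L) = 0$. The key point is that $G$ fits as a cycle module into an F-totally acyclic complex of flat sheaves: by definition of Gorenstein flat, there is an exact complex $F$ of flat sheaves with $F \tensor -$ exact on injective sheaves (equivalently $F \in {}_I\tilclass{F}$) and $G = Z_0 F$ for some choice of indexing, say $G = B_0 F = Z_{-1}F$ after a shift. Then the short exact sequence $0 \to Z_0 F \to F_0 \to Z_{-1}F \to 0$ (with $Z_{-1}F$ Gorenstein flat and $F_0$ flat, hence both having the relevant vanishing against $S^0(L)$-type complexes) together with the standard identification of $\Ext^1_{\mathbb{X}}(Z_nF, L)$ with a piece of $\Ext^1_{\chqcox}(F, S^n(L))$ or $\Ext^1_{\chqcox}(F, \Sigma^n S^0(L))$ lets us transfer the hypothesis $\Ext^1_{\chqcox}(F,S^0(L)) = 0$ into $\Ext^1_{\mathbb{X}}(G,L) = 0$. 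Concretely, I would invoke~\cite[Lemma~4.2]{gillespie-degreewise-model-strucs} again, which gives $\Ext^1_{\chqcox}(F, S^n(L)) \cong \Ext^1_{\mathbb{X}}(F_n/B_nF, L)$, and observe that every Gorenstein flat sheaf arises as such a quotient $F_n/B_nF$ for a suitable $F \in {}_I\tilclass{F}$ and suitable $n$ (shifting if necessary).

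The main obstacle I anticipate is the bookkeeping in the ``only if'' direction: one must be careful that \emph{every} Gorenstein flat $G$ really does appear as $F_n/B_nF$ for some F-totally acyclic complex $F$ of flats — this is essentially the definition, but it requires knowing that the defining complex witnessing $G$'s Gorenstein flatness can be taken to lie in ${}_I\tilclass{F}$ rather than merely being an exact complex of flats with flat cycles. Over a semiseparated Noetherian scheme this identification is available (it is built into the setup of~\cite{estrada-gillespie-coherent-schemes} and~\cite{cet-G-flat-stable-scheme}), so the argument goes through; but it is the point where one cannot be cavalier. Everything else is a routine application of the degreewise $\Ext$ isomorphism and Lemma~\ref{lemma-bounded complexes}.
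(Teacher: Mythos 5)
Your proposal is correct and follows essentially the same route as the paper: both directions come down to the isomorphism $\Ext^1_{\chqcox}(F,S^0(L)) \cong \Ext^1_{\mathbb{X}}(F_0/B_0F,L)$ together with the observation that, by definition, the Gorenstein flat sheaves are exactly the sheaves of the form $F_0/B_0F \cong Z_{-1}F$ for $F$ ranging over ${}_I\tilclass{F}$. The ``obstacle'' you flag at the end is not actually a gap: the defining condition for a Gorenstein flat sheaf is precisely that it be a cycle of an F-totally acyclic complex of flats (i.e.\ a member of ${}_I\tilclass{F}$), not merely of an exact complex of flats with flat cycles, so the transfer is by definition and no further argument is needed.
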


\begin{proof}
Suppose $S^0(L) \in \rightperp{{}_I\tilclass{F}}$ and let $M = F_0/B_0F$ be a Gorenstein flat sheaf, arising from the F-totally acyclic complex of flats, $F$. Then the isomorphism $$\Ext^1_{\mathbb{X}}(F_0/B_0F,L) \cong \Ext^1_{\chqcox}(F,S^0(L)) = 0$$ tells us $L$ is Gorenstein cotorsion. On the other hand, if $L$ is Gorenstein cotorsion and $F$ is any F-totally acyclic complex of flats, then the same isomorphism tells us $S^0(L) \in  \rightperp{{}_I\tilclass{F}}$.
\end{proof}

\begin{theorem}\label{them-G-Quillen-equiv}
Assume $\mathbb{X}$ is a semiseparated Noetherian scheme.
Then the functor
$$S^0(-) : \mathfrak{M}^{flat}_{\textnormal{G}}  \xrightarrow{} \mathfrak{M}^{flat}_{\textnormal{F-to}}$$ is a right Quillen equivalence. The functor $G : \chqcox \xrightarrow{} \qcox$ defined by the rule $G(X) = X_0/B_0X$ is the left adjoint, so this is the (left) Quillen inverse.
\end{theorem}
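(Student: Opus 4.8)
The plan is to recognize $(G,S^{0})$ as a Quillen adjunction and then verify it is a Quillen equivalence by identifying the functor it induces between homotopy categories with the (already known) triangulated equivalence $K_{F\textnormal{-}to}(Flat\textnormal{-}Cot)\simeq\textnormal{St}(G\textnormal{-}Flat\textnormal{-}Cot)$. First I would record the standard adjunction for chain complexes, $\Hom_{\chqcox}(X,S^{0}L)\cong\Hom_{\mathbb{X}}(X_{0}/B_{0}X,L)$, natural in $X$ and $L$; this exhibits $G=(-)_{0}/B_{0}(-)=\cok(d_{1})$ as the left adjoint of $S^{0}$. (Recall $S^{0}$ also has a right adjoint, the cycle functor $Z_{0}$; but here it is the left adjoint $G$ that plays a role, and $S^{0}$ will turn out to be right Quillen rather than left Quillen as it is in Theorem~\ref{them-G-inj}.)

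Since $S^{0}$ is a right adjoint, to obtain the Quillen adjunction it suffices to check that $S^{0}$ preserves fibrations and trivial fibrations. The fibrations of $\mathfrak{M}^{flat}_{\textnormal{F-to}}$ are the epimorphisms with kernel in $\dwclass{C}$ and its trivial fibrations those with kernel in $\dwclass{C}\cap\class{V}_{\textnormal{to}}=\rightperp{{}_I\tilclass{F}}$, while the fibrations (resp.\ trivial fibrations) of $\mathfrak{M}^{flat}_{\textnormal{G}}$ are the epimorphisms with cotorsion (resp.\ Gorenstein cotorsion) kernel. As $S^{0}$ is exact it sends an epimorphism $f$ to the epimorphism $S^{0}(f)$ with kernel $S^{0}(\ker f)$; if $\ker f$ is cotorsion then $S^{0}(\ker f)\in\dwclass{C}$, and if $\ker f$ is Gorenstein cotorsion then $S^{0}(\ker f)\in\rightperp{{}_I\tilclass{F}}$ by Lemma~\ref{lemma-sphere}. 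Hence $S^{0}$ is right Quillen and $(G,S^{0})$ is a Quillen adjunction.

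Since $S^{0}$ is right Quillen with left adjoint $G$, it is a Quillen equivalence exactly when the total left derived functor $LG$ is an equivalence of homotopy categories, and this step is the heart of the matter. Every object of $\textnormal{Ho}(\mathfrak{M}^{flat}_{\textnormal{F-to}})$ is isomorphic to a bifibrant one, i.e.\ an F-totally acyclic complex $X$ of flat-cotorsion sheaves, and on such $X$ one has $LG(X)=G(X)=X_{0}/B_{0}X\cong Z_{-1}X$, using that $X$ is exact. Here $Z_{-1}X$ is Gorenstein flat, being a cycle of an F-totally acyclic complex of flats, and it is cotorsion by Theorem~\ref{them-cotorsion-complexes-scheme} (an exact complex of cotorsion sheaves has cotorsion cycles), hence bifibrant in $\mathfrak{M}^{flat}_{\textnormal{G}}$. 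Thus $LG$ restricts to the cycle functor $X\mapsto Z_{-1}X$ from $K_{F\textnormal{-}to}(Flat\textnormal{-}Cot)$ to $\textnormal{St}(G\textnormal{-}Flat\textnormal{-}Cot)$, and it remains to see this is an equivalence. Essential surjectivity comes from the theory underlying~\cite{cet-G-flat-stable-scheme}: any Gorenstein flat cotorsion sheaf has a complete flat resolution, which by the flat cotorsion pair together with Theorem~\ref{them-cotorsion-complexes-scheme} may be arranged to have flat-cotorsion entries, realizing the sheaf as a cycle.

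The main obstacle is full faithfulness, and it is precisely here that Theorem~\ref{them-cotorsion-complexes-scheme} does the work done by projectives in~\cite{estrada-gillespie-coherent-schemes}. For bifibrant $X,Y$ one must show that chain homotopy classes of maps $X\to Y$ correspond bijectively to stable homotopy classes of maps $Z_{-1}X\to Z_{-1}Y$ (two maps being stably equivalent when their difference factors through a flat-cotorsion sheaf). For surjectivity one lifts a map of cycles $g\colon Z_{-1}X\to Z_{-1}Y$ to a chain map $X\to Y$ degreewise, the successive obstructions lying in groups $\Ext^{1}_{\mathbb{X}}(X_{n},Z_{m}Y)$, which vanish because each $X_{n}$ is flat and each $Z_{m}Y$ is cotorsion (again Theorem~\ref{them-cotorsion-complexes-scheme}), with the bounded-complex statements of Lemmas~\ref{lemma-transfinite extensions of spheres and disks}--\ref{lemma-sphere} handling the auxiliary replacements. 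For injectivity one shows that a chain map inducing a stably trivial map on $Z_{-1}$ is null-homotopic: a factorization $Z_{-1}X\to W\to Z_{-1}Y$ through a flat-cotorsion $W$ is spread out to a map of complexes through an exact complex with flat cycles, and the chain homotopy is then assembled using that every chain map from such a complex into a complex of cotorsion sheaves (here $Y$) is null-homotopic --- which is exactly Theorem~\ref{them-cotorsion-complexes-scheme}. This makes $LG$ an equivalence, simultaneously re-proving the triangulated equivalence of~\cite[Corollary~4.6]{cet-G-flat-stable-scheme} without projectives, and completes the proof.
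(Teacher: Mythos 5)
You take a genuinely different route for the Quillen-equivalence part of the argument, and it is worth comparing the two. The Quillen adjunction part is essentially the same as the paper's: the paper checks that $G$ is left Quillen (preserves cofibrations and trivial cofibrations), while you check that $S^0$ is right Quillen; these are equivalent verifications, and your use of Lemma~\ref{lemma-sphere} to identify $\class{V}\cap\class{C}$ with the kernels landing in $\rightperp{{}_I\tilclass{F}}$ under $S^0$ is correct. For the equivalence itself, the paper applies the criterion of~\cite[Corollary~1.3.16(b)]{hovey-model-categories} and carries out an explicit verification of both hypotheses; in particular, condition~(ii) is handled by analyzing a cofibrant replacement of $S^0(C)$, decomposing the kernel $Y$ into a bounded-below part and a bounded-above part, and using Lemma~\ref{lemma-bounded complexes} and Lemma~\ref{lemma-sphere} together with Theorem~\ref{them-cotorsion-complexes-scheme} to conclude $Y_0/B_0Y$ is Gorenstein cotorsion. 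You instead identify $LG$ on bifibrant objects with the cycle functor $Z_{-1}\colon K_{F\textnormal{-}to}(Flat\textnormal{-}Cot)\to\textnormal{St}(G\textnormal{-}Flat\textnormal{-}Cot)$ and aim to show directly that this is a triangulated equivalence, whence the Quillen equivalence follows by general theory. That global strategy is valid, and the identification $LG(X)\cong Z_{-1}X$ on bifibrant $X$ and the essential-surjectivity sketch are fine.

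Where your argument has a genuine gap is in faithfulness of $Z_{-1}$. Your sketch ``spreads out'' a factorization $Z_{-1}X\to W\to Z_{-1}Y$ to a factorization $X\xrightarrow{\phi}T\xrightarrow{\psi}Y$ through an exact complex $T$ with flat cycles, and invokes Theorem~\ref{them-cotorsion-complexes-scheme} to see $\psi$ (hence $\psi\phi$) is null-homotopic. But this only shows $\psi\phi$ is null-homotopic, not $f$: by construction $Z_{-1}(\psi\phi)=Z_{-1}f$, so you have reduced to showing that a chain map $g=f-\psi\phi$ between bifibrant complexes with $Z_{-1}g=0$ (in the plain, not merely stable, sense) is null-homotopic. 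This is not automatic and is not addressed; in fact it is the nontrivial content of the faithfulness claim you set out to prove, so as written the step is circular. By contrast, the paper's direct verification of~\cite[Corollary~1.3.16(b)]{hovey-model-categories} avoids ever having to prove the cycle functor faithful by hand. Your fullness sketch (lifting a map of cycles degree by degree, with obstructions in groups of the form $\Ext^1_{\mathbb{X}}(X_n,Z_mY)$ which vanish since the $X_n$ are flat and the $Z_mY$ are cotorsion by Theorem~\ref{them-cotorsion-complexes-scheme}) is sound in outline. So the overall architecture is a legitimate alternative, but the faithfulness step needs an actual argument --- for instance, a direct inductive construction of the homotopy using that $X$ has flat components and $Y$ has cotorsion cycles --- rather than the spread-out factorization as stated.
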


\begin{proof}
Note that for any scheme $\mathbb{X}$, the functor $G$ is indeed a left adjoint. Its right adjoint is the functor $S^0$ which turns a sheaf into a complex concentrated in degree zero. To see that $G$ is left Quillen, let $X \xrightarrow{f} Y$ be a cofibration in the F-totally acyclic flat model structure. Then by definition we have a short exact sequence $0 \xrightarrow{} X  \xrightarrow{f} Y \xrightarrow{} F \xrightarrow{} 0$ with $F$ an F-totally acyclic complex of flat sheaves. The functor $G$ is only right exact in general, but since $F$ is exact we do get a short exact sequence $0 \xrightarrow{} X_0/B_0X  \xrightarrow{G(f)} Y_0/B_0Y \xrightarrow{} F_0/B_0F \xrightarrow{} 0$. We have $F_0/B_0F \cong Z_{-1}F$ which is Gorenstein flat. Since $G(f)$ is a monomorphism with Gorenstein flat cokernel, $G$ preserves cofibrations. The same argument shows that $G$ preserves trivial cofibrations; we just replace $F$ with an acyclic complex with all cycles flat.

To show that $G$ is a Quillen equivalence we use~\cite[Corollary~1.3.16(b)]{hovey-model-categories}. In our case it means we must prove the following: (i)  If $X \xrightarrow{f} Y$ is a chain map between two F-totally acyclic complexes of flats for which $X_0/B_0X \xrightarrow{G(f)} Y_0/B_0Y$ is a weak equivalence, then $f$ itself must be a weak equivalence. (ii) For all cotorsion sheaves $C$, the map $GQS^0(C) \xrightarrow{} C$, where $Q$ is cofibrant replacement, is a weak equivalence.

To prove (i), we use the factorization axiom to write $f = pi$ where $X \xrightarrow{i} Z$ is a trivial cofibration and $Z \xrightarrow{p} Y$ is a fibration. We note that we have short exact sequences $0 \xrightarrow{} X  \xrightarrow{i} Z \xrightarrow{} F \xrightarrow{} 0$ and  $0 \xrightarrow{} K  \xrightarrow{} Z \xrightarrow{p} Y \xrightarrow{} 0$. Since $F \in \tilclass{F}$,  it is F-totally acyclic and hence $Z$ and therefore $K$ must be too. Applying $G$ to this factorization gives us short exact sequences
$$0 \xrightarrow{} X_0/B_0X  \xrightarrow{G(i)} Z_0/B_0Z \xrightarrow{} F_0/B_0F \xrightarrow{} 0$$ and  $$0 \xrightarrow{} K_0/B_0K  \xrightarrow{} Z_0/B_0Z \xrightarrow{G(p)} Y_0/B_0Y \xrightarrow{} 0$$ and a factorization $G(f) = G(p)G(i)$. As already shown above, $G(i)$ is a trivial cofibration. Also $G(f)$ is a weak equivalence by hypothesis. So by the two out of three axiom, $G(p)$ must also be a weak equivalence. Being a surjection, it means that $\ker{(G(p))} = K_0/B_0K$ must be trivial. But we also have that $K$ is F-totally acyclic, thus $\ker{(G(p))}$ is Gorenstein flat.  This means $\ker{(G(p))}$ is trivially cofibrant; that is, a flat sheaf. But the class of trivial sheaves is thick and contains the flat sheaves, and hence each $Z_nK$  must be trivially cofibrant (flat). This proves that $\ker{p} \in \tilclass{F}$, and so $p$ is a trivial fibration. Therefore $f$ is a weak equivalence, and we have proved (i).

To prove (ii), let $C$ be a cotorsion sheaf. To get a cofibrant replacement of $S^0C$ in the F-totally acyclic flat model structure,  we use enough projectives of the cotorsion pair
$({}_I\tilclass{F}, \rightperp{{}_I\tilclass{F}})$. This gives us a short exact sequence
\begin{equation}\label{equation-comp-precover}\tag{$*$}
0 \xrightarrow{} Y \xrightarrow{} F \xrightarrow{} S^0C \xrightarrow{} 0
\end{equation}
with $F$ an F-totally acyclic complex of flats and $Y \in \rightperp{{}_I\tilclass{F}}$.
Applying $G$ to the short exact sequence gives us, using the snake lemma, another short exact sequence
\begin{equation}\label{equation-mods-precover}\tag{$**$}
0 \xrightarrow{} Y_0/B_0Y \xrightarrow{} F_0/B_0F \xrightarrow{} C \xrightarrow{} 0.
\end{equation}
The problem is to show that $F_0/B_0F \xrightarrow{} C$ is a weak equivalence. Being an epimorphism, it is enough to show its kernel is a Gorenstein cotorsion sheaf. So our goal is to show that $\Ext^1_{\mathbb{X}}(M,C) = 0$ for any Gorenstein flat sheaf $M$.
Below we have written some of the short exact sequence in~\eqref{equation-comp-precover}  in more detail.
\[
  \begin{CD}
      @VVV                               @VVV                                  @VVV       \\
     Y_{2} @=  F_{2}    @>>> 0 \\
      @Vd_2VV                              @Vd_2VV                                  @VVV       \\
    Y_{1} @=    F_{1}                 @>>>              0           \\
      @Vd_1VV                               @Vd_1VV                                  @VVV       \\
     Y_0         @>>>        F_0       @>>>   C \\
      @Vd_0VV                              @Vd'_0VV                                  @VVV       \\
        Y_{-1}         @=   F_{-1}          @>>>  0 \\
      @VVV                              @VVV                                  @VVV       \\
  \end{CD}
\]
Now each component $Y_n$ must be cotorsion, and each component of $F_n$ is flat.
Since we are given that $C$ is a cotorsion sheaf, the components of $F$ are all flat-cotorsion sheaves. In particular, $F$ is an acyclic complex of cotorsion sheaves and so each of its cycles must also be cotorsion by Theorem~\ref{them-cotorsion-complexes-scheme} (that is, \cite[Theorem~3.3]{cet-G-flat-stable-scheme}). Plainly, $B_0Y = B_0F$ is one of these cotorsion cycles. So let us note right now that the complex below is an exact complex with cotorsion cycles
$$\cdots \xrightarrow{} Y_2 \xrightarrow{d_2} Y_1 \xrightarrow{d_1} B_0Y \xrightarrow{} 0. $$
The complex $Y$ in~\eqref{equation-comp-precover} is an extension of the complex just written above, as indicated in the diagram below.
\[
  \begin{CD}
      @VVV                               @VVV                                  @VVV       \\
     Y_{2} @=  Y_{2}    @>>> 0 \\
      @Vd_2VV                              @Vd_2VV                                  @VVV       \\
    Y_{1} @=    Y_{1}                 @>>>              0           \\
      @Vd_1VV                               @Vd_1VV                                  @VVV       \\
     B_0Y         @>>>     Y_0          @>>\pi_0>   Y_0/B_0Y \\
      @VVV                              @Vd_0VV                                  @V\bar{d}_0VV       \\
          0         @>>>   Y_{n-1}          @=  Y_{n-1} \\
      @VVV                              @VVV                                  @VVV       \\
  \end{CD}
\]
Now the middle complex $Y$ is in $\rightperp{{}_I\tilclass{F}}$, and the complex on the left is also in $\rightperp{{}_I\tilclass{F}}$, by Lemma~\ref{lemma-bounded complexes}. Since $\rightperp{{}_I\tilclass{F}}$ is a coresolving class (closed under cokernels of monomorphisms), the complex on the right must also be in $\rightperp{{}_I\tilclass{F}}$ :
$$0 \xrightarrow{} Y_0/B_0Y \xrightarrow{\bar{d}_0} Y_{-1} \xrightarrow{d_{-1}} Y_{-2} \xrightarrow{} \cdots $$
But in turn, this complex is a trivial extension as shown below:
\[
  \begin{CD}
      @VVV                               @VVV                                  @VVV       \\
     0      @= 0 @= 0\\
      @VVV                              @VVV                                  @VVV       \\
    0 @>>>    Y_0/B_0Y                 @=             Y_0/B_0Y          \\
      @VVV                                @V\bar{d}_0VV                                 @VVV       \\
     Y_{-1}        @= Y_{-1}          @>>>   0\\
      @Vd_{-1}VV                               @Vd_{-1}VV                                  @VVV       \\
         Y_{-2}          @=   Y_{-2}          @>>>  0 \\
      @VVV                              @VVV                                  @VVV       \\
  \end{CD}
\]
Now here, we have already deduced that the middle complex is in $\rightperp{{}_I\tilclass{F}}$. The complex on the left is a bounded above complex of cotorsion-flat sheaves. By \cite[Lemma~2.3]{cet-G-flat-stable-scheme} we know that flat-cotorsion sheaves are also Gorenstein cotorsion. So the complex on the left is also in $\rightperp{{}_I\tilclass{F}}$, by Lemma~\ref{lemma-bounded complexes}. Again, $\rightperp{{}_I\tilclass{F}}$ is a coresolving class, and so the complex on the right must also be in $\rightperp{{}_I\tilclass{F}}$. Thus we have now shown that $S^0(Y_0/B_0Y) \in\rightperp{{}_I\tilclass{F}}$. By Lemma~\ref{lemma-sphere}  we conclude that $Y_0/B_0Y$ is Gorenstein cotorsion. This proves (ii) and completes the proof that $(G, S^0)$ is a Quillen adjunction from  $\mathfrak{M}^{flat}_{\textnormal{F-to}}$ to $\mathfrak{M}^{flat}_{\textnormal{G}}$.
\end{proof}

\section{Dualizing complexes induce Quillen Equivalences}\label{sec-Quillen-dualizing}

So far we have at most qualified $\mathbb{X}$ to be a semiseparated Noetherian scheme. We now assume that it also admits a dualizing complex $D$. We note that, as in~\cite{iyengar-krause} and~\cite{murfet-salarian}, we are taking this to include the assumption that $D$ be a bounded complex of injectives.

Consider the affine case of a commutative Noetherian ring $R$.
Iyengar and Krause proved in~\cite{iyengar-krause} that for a commutative Noetherian ring $R$ with dualizing complex $D$, the functor $$D \tensor_R - : K(Proj) \xrightarrow{} K(Inj)$$ is an equivalence of triangulated categories. Murfet extended this to sheaves on a semiseparated  Noetherian scheme $\mathbb{X}$ in his thesis~\cite{murfet-thesis}, by replacing $K(Proj)$ with $\class{D}(Flat) = K(Flat)/\tilclass{F}$, where $\tilclass{F}$ is the class of pure acyclic complexes of flats. See also~\cite{murfet-salarian}.

We would like to give a model category interpretation of this. We already described in Section~\ref{sec-models} the abelian model category structures recovering $K(Inj)$ and $\class{D}(Flat)$, as well as $K(Proj)$ in the affine case.  So it is natural to wish to see how a dualizing complex $D$ yields a Quillen equivalence whose derived adjunction is an equivalence from the contraderived category to the coderived category. This is what we show in this section.

Our first step is to prove Proposition~\ref{prop-quillenfunctor} which says that tensoring with $D$ is a Quillen adjunction. The proof boils down to Iyengar and Krause's observation that, since $D$ is a complex of injectives and $R$ is Noetherian, $D \tensor - : \ch \xrightarrow{} \ch$ takes complexes of flat (and in particular projective) $R$-modules  to complexes of injective $R$-modules. Building on Neeman's~\cite[Section~9]{neeman-flat}, Murfet extends this to semiseparated Noetherian schemes in~\cite[Section~8]{murfet-thesis}. The following lemma records some of their fundamental results which will ultimately lead us to the Quillen equivalence.

\begin{lemma}\label{lemma-tensor-injectives}
Let $R$ be a commutative Noetherian ring and $I$ an injective $R$-module.
\begin{enumerate}
\item $I \tensor P$ is an injective $R$-module whenever $P$ is a projective $R$-module.
\item $I \tensor F$ is an injective $R$-module whenever $F$ is a flat $R$-module.
\item $\Hom_R(I,E)$ is a flat-cotorsion $R$-module whenever $E$ is injective.
\end{enumerate}
In general, let $\mathbb{X}$ be a semiseparated Noetherian scheme and $I$ an injective sheaf on $\mathbb{X}$.
Then $I \tensor F$ is an injective sheaf whenever $F$ is a flat sheaf.
On the other hand, $\sheafhom_{qc}(I,E)$ is a flat-cotorsion sheaf whenever $E$ is an injective sheaf.

Moreover, not only are injectives in $\qcox$ closed under direct sums, but flat-cotorsion sheaves are closed under direct products in $\qcox$.
\end{lemma}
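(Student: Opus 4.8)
The plan is to reduce everything to the affine statements (1)--(3), which are themselves the classical results of Iyengar--Krause (building on Enochs--Jenda-type computations), and then globalize to a semiseparated Noetherian scheme using a finite semiseparating affine cover. First I would record the affine parts: (1) follows since a projective module is a summand of a free module and $I \tensor R^{(\Lambda)} \cong I^{(\Lambda)}$ is injective because over a Noetherian ring arbitrary direct sums of injectives are injective (Bass--Papp); (2) follows from (1) by writing a flat module $F$ as a filtered colimit (or using that $F$ is a pure submodule of a product of copies of $R$, together with Lazard and the fact that over Noetherian rings injectivity is preserved by the relevant limits/colimits), which is exactly how Iyengar--Krause argue; (3) is the adjoint/dual computation: $\Hom_R(I,E)$ is flat because $I \tensor -$ preserves injectives so $\Hom_R(I,-)$ is exact on... more precisely one uses that $\Hom_R(I,E)$ is cotorsion since $E$ is (Hom into a cotorsion module, or more carefully into an injective, lands in cotorsion), and it is flat by a character-module argument: $\Hom_{\Z}(\Hom_R(I,E),\Q)$ is a submodule/quotient of something involving $I \tensor (-)$ applied to injectives, hence injective. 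I would simply cite \cite{iyengar-krause} for (1)--(3) rather than reprove them.

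Next I would globalize. Fix a finite affine cover $\mathbb{X} = \bigcup_{j=1}^m U_j$ with all intersections $U_{j_0 \cdots j_p}$ affine (possible since $\mathbb{X}$ is semiseparated and quasi-compact), say $U_{j_0\cdots j_p} = \operatorname{Spec} R_{j_0\cdots j_p}$. For an injective sheaf $I$ on $\mathbb{X}$ and any open affine $U = \operatorname{Spec} S$, the restriction $I|_U$ is an injective quasi-coherent sheaf on $U$, i.e. corresponds to an injective $S$-module; this is the standard fact that restriction to an open subscheme preserves injectivity of quasi-coherent sheaves (it has an exact left adjoint, extension by zero at the level of quasi-coherent sheaves, or one uses \cite[Prop.~II.7.17]{hartshorne-residues and duality}). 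Tensor product of quasi-coherent sheaves commutes with restriction to opens, and a sheaf is flat iff its restrictions to the affine pieces of a cover are flat modules; likewise a quasi-coherent sheaf is injective iff it satisfies the local criterion, but more usefully, to check $I \tensor F$ is injective I would use that a quasi-coherent sheaf $\mathcal{G}$ on a semiseparated Noetherian $\mathbb{X}$ is injective iff each $\mathcal{G}|_{U_j}$ is injective on $U_j$ (this direction, ``locally injective implies injective,'' holds on Noetherian semiseparated schemes — one sees it via the Čech resolution $0 \to \mathcal{G} \to \prod j_{j*}(\mathcal{G}|_{U_j}) \to \cdots$ and the fact that $j_{j*}$ of an injective is injective since $j_j^*$ is exact, together with injectives being closed under the relevant finite products and under kernels/cokernels of the Čech complex — cf. \cite{hartshorne-residues and duality}, \cite{murfet-thesis} Section~8). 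Applying (2) on each $U_j$ gives that $(I \tensor F)|_{U_j} = (I|_{U_j}) \tensor (F|_{U_j})$ is injective, hence $I \tensor F$ is injective. For the internal hom statement, $\sheafhom_{qc}(I,E)$ restricts on each $U_j$ to $\Hom_{R_j}(I|_{U_j}, E|_{U_j})$ (the internal hom of quasi-coherent sheaves is local on semiseparated schemes, as recalled in \cite[\S2.1]{murfet-salarian}), which by (3) is flat-cotorsion over $R_j$; since flatness is local, $\sheafhom_{qc}(I,E)$ is flat, and I would get cotorsion either from the local flat-cotorsion description combined with \cite{neeman-flat}/\cite{murfet-salarian}, or directly: cotorsion is detected by vanishing of $\Ext^1_{\mathbb{X}}(F,-)$ against flat sheaves $F$, and one reduces this via the Čech complex / the exact sequence defining $\sheafhom_{qc}$ as a limit, to the affine statements. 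The last sentence (injectives in $\qcox$ closed under direct sums — already contained in \cite[Prop.~II.7.17]{hartshorne-residues and duality}; flat-cotorsion sheaves closed under direct products) I would handle by noting flat-cotorsion sheaves are exactly the $\sheafhom_{qc}(I,E)$ with $E$ injective, or more simply: products of flats need not be flat in general, but $\prod^{qc}$ of flat-cotorsion sheaves restricts on each affine $U_j$ to a product of flat-cotorsion $R_j$-modules (products in $\qcox$ restrict to products of modules on the affine pieces of a semiseparated cover, up to a correction that is still built from such products), and products of flat-cotorsion modules over a commutative ring are flat-cotorsion (flat because cotorsion-flat = $\Hom_R(I,E)$'s are closed under products since $\Hom$ turns coproducts of injectives into products and a product of injectives over a Noetherian ring is injective; cotorsion because cotorsion is always closed under products).

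The main obstacle I anticipate is the globalization of ``locally injective $\Rightarrow$ injective'' and the corresponding statement that $\prod^{qc}$ of flat-cotorsion sheaves is again flat-cotorsion, since products in $\qcox$ are genuinely subtle (they are not computed section-wise) and restriction does not commute with $\prod^{qc}$ on the nose. The clean way around this is to work entirely through the Čech complex associated to the finite semiseparating cover: for any quasi-coherent sheaf $\mathcal{G}$ one has the exact complex $0 \to \mathcal{G} \to \bigoplus_{j} (j_j)_* (j_j)^* \mathcal{G} \to \bigoplus_{j<k}(j_{jk})_*(j_{jk})^*\mathcal{G} \to \cdots$ (finite because the cover is finite), whose terms, when $\mathcal{G}$ is injective/flat-cotorsion, are pushforwards from affine opens of injective/flat-cotorsion modules — and pushforward along an affine open immersion preserves these properties. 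Then one checks the desired closure properties term-by-term and propagates along the (finite) complex using that injective sheaves, resp. flat-cotorsion sheaves, form a coresolving, resp. resolving-and-coresolving, class closed under finite direct sums — these structural facts are available from the earlier sections and from \cite{murfet-salarian, neeman-flat}. I would present this Čech-complex reduction as the technical heart of the argument and otherwise defer to \cite{iyengar-krause} and \cite[Section~8]{murfet-thesis}, where the flat-injective interchange and its scheme-theoretic version are established in detail.
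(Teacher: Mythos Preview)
Your affine arguments for (1)--(3) match the paper's in spirit: (1) via summands of free modules and Bass--Papp, (2) via Lazard and closure of injectives under filtered colimits over a Noetherian ring. For (3) the paper gives a clean reason for cotorsion that you only gesture at: $\Hom_R(M,E)$ is always \emph{pure-injective} when $E$ is injective (cf.\ \cite[Prop.~5.3.7]{enochs-jenda-book}), and pure-injective modules are cotorsion.

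Where you diverge is in the globalization. For the statement that $I \tensor F$ is injective on a semiseparated Noetherian $\mathbb{X}$, the paper bypasses the {\v C}ech complex entirely and works at the level of \emph{stalks}: by \cite[Prop.~II.7.17]{hartshorne-residues and duality}, injectivity of a quasi-coherent sheaf on a locally Noetherian scheme can be checked stalkwise, and $(I \tensor F)_x \cong I_x \tensor_{\mathcal{O}_x} F_x$ is injective over the Noetherian local ring $\mathcal{O}_x$ by the affine case (2). This is a one-line reduction; your route through ``locally injective on an affine cover implies injective'' and the {\v C}ech resolution is valid but substantially more elaborate, and in fact the quickest justification of that implication is again the stalk criterion.

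For $\sheafhom_{qc}(I,E)$ being flat-cotorsion and for closure of flat-cotorsion sheaves under $\prod^{qc}$, the paper does not attempt a self-contained argument: it explicitly flags that the analogous computations for sheaves are ``more difficult'' (precisely because $\sheafhom_{qc}$ and $\prod^{qc}$ involve the coherator and do not restrict sectionwise) and simply cites \cite[Lemmas~3.2/3.9]{murfet-salarian}. Your proposed {\v C}ech-complex reduction is a reasonable strategy for reproving those lemmas, but be aware that your intermediate claim ``$\sheafhom_{qc}(I,E)$ restricts on each $U_j$ to $\Hom_{R_j}(I|_{U_j},E|_{U_j})$'' and the assertion that $\prod^{qc}$ interacts well with restriction both require justification---these are exactly the subtleties that Murfet--Salarian work through. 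So your plan is correct but reinvents what the paper is content to cite, and misses the simpler stalk argument for the tensor case.
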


\begin{proof}
Let us first consider the case of a commutative Noetherian ring $R$. Since direct sums of injective modules are again injective in this case, $I \tensor (\oplus R)$ is injective for any free module $\oplus R$. It follows that $I \tensor P$ is injective for any projective $R$-module $P$, since $P$ is a summand of a free $R$-module.
In fact, direct limits of injective $R$-modules are again injective whenever $R$ is Noetherian ring. So $I \tensor F$ is injective for any flat $F$, by Lazard's Theorem.

Now consider the case of a semiseparated Noetherian scheme $\mathbb{X}$, an injective sheaf $I$, and a flat sheaf $F$. By~\cite[Prop.~II.7.17]{hartshorne-residues and duality}, injectivity of a sheaf can be checked on stalks, and of course this is always the case for flatness. In particular, the tensor product $I \tensor F$ is injective if and only if each stalk $(I \tensor F)_x \cong I_x \tensor_{\class{O}_x} F_x$ is an injective ${\class{O}_x}$-module. The local ring ${\class{O}_x}$ is Noetherian for  any locally Noetherian scheme $\mathbb{X}$, so we have reduced to the above case of a Noetherian ring $R$.

Note that for modules over a commutative ring, $\Hom_R(I,E)$ isn't just flat whenever $I$ and $E$ are injective.  We also have that for any module $M$, $\Hom_R(M,E)$ is always pure-injective when $E$ is injective; for example, see the proof of~\cite[Prop.5.3.7]{enochs-jenda-book}. Pure-injective modules are clearly cotorsion modules, and cotorsion modules are always closed under products. Flat modules too are closed under products assuming the ring is Noetherian (or just coherent). For sheaves on $\mathbb{X}$, it is more difficult to make the analogous arguments for $\sheafhom_{qc}(I,E)$. Nevertheless, Murfet and Salarian have shown exactly this in~\cite[Lemmas~3.2/3.9]{murfet-salarian}.
\end{proof}

\begin{proposition}\label{prop-quillenfunctor}
Let $\mathbb{X}$ be a semiseparated Noetherian scheme admitting a dualizing complex $D$. Then the functor $D \tensor - : \mathfrak{M}^{flat}_{\textnormal{ctr}}  \xrightarrow{} \mathfrak{M}^{inj}_{\textnormal{co}}$ is a left Quillen functor. In fact, we have
\begin{enumerate}
\item $D \tensor F$ is a complex of injectives whenever $F$ is a complex of flats.
\item $D \tensor F$ is a contractible complex of injectives whenever $F$ is a pure acyclic complex of flats.~\textnormal{cf.}~\cite[Lemma~8.2]{murfet-thesis}.
\end{enumerate}

In the affine case of a commutative Noetherian ring $R$ admitting a dualizing complex $D$, then $D \tensor - : \mathfrak{M}^{proj}_{\textnormal{ctr}}  \xrightarrow{} \mathfrak{M}^{inj}_{\textnormal{co}}$ is also a left Quillen functor.
\end{proposition}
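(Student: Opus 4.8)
The plan is to verify the two Quillen-adjunction conditions of the form ``left adjoint preserves cofibrations and trivial cofibrations.'' Recall that in $\mathfrak{M}^{flat}_{\textnormal{ctr}} = (\dwclass{F}, \class{W}_{\textnormal{ctr}}, \dwclass{C})$ the cofibrations are the monomorphisms with cokernel in $\dwclass{F}$ (degreewise flat), and the trivial cofibrations are the monomorphisms with cokernel in $\dwclass{F} \cap \class{W}_{\textnormal{ctr}} = \tilclass{F}$ (pure acyclic complexes of flats). In $\mathfrak{M}^{inj}_{\textnormal{co}} = (All, \class{W}_{\textnormal{co}}, \dwclass{I})$ the trivial fibrations are the epimorphisms with kernel in $\dwclass{I}$, and the fibrations are the epimorphisms with kernel in $\class{W}_{\textnormal{co}} \cap \dwclass{I}$, which is the class of contractible complexes of injectives. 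Since $D \tensor -$ is exact (each $D_n$ is flat, being injective over a Noetherian scheme, so tensoring with the bounded complex $D$ is exact on short exact sequences of complexes), it suffices to show: (1) $D \tensor F \in \dwclass{I}$ whenever $F \in \dwclass{F}$; and (2) $D \tensor F$ is contractible whenever $F \in \tilclass{F}$. These are precisely the two itemized claims, and they are stated to follow from Lemma~\ref{lemma-tensor-injectives} together with Murfet's~\cite[Lemma~8.2]{murfet-thesis}.

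For (1), I would argue degreewise. Since $D$ is bounded, $(D \tensor F)_n = \bigoplus_{i+j=n} D_i \tensor F_j$ is a \emph{finite} direct sum; each summand $D_i \tensor F_j$ is an injective sheaf by the last assertion of Lemma~\ref{lemma-tensor-injectives} (tensor of an injective with a flat is injective on a semiseparated Noetherian scheme), and a finite direct sum of injectives is injective. Hence $D \tensor F \in \dwclass{I}$. For (2), if $F \in \tilclass{F}$ is pure acyclic with flat cycles, then $D \tensor F$ is certainly in $\dwclass{I}$ by (1), and it is acyclic since $D \tensor -$ preserves exactness; I would then invoke Murfet's observation~\cite[Lemma~8.2]{murfet-thesis} that $D \tensor F$ is in fact contractible --- or, alternatively, note that $D \tensor F$ is an \emph{exact} complex of injectives all of whose cycles $Z_n(D \tensor F)$ are (direct summands of, or built from) tensor products of injectives with the flat cycles of $F$, hence are injective, so the short exact sequences $0 \to Z_{n+1} \to (D\tensor F)_{n+1} \to Z_n \to 0$ split degreewise, making $D \tensor F$ contractible. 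Conditions (1) and (2) together say exactly that $D \tensor -$ sends cofibrations to cofibrations and trivial cofibrations to trivial cofibrations (monomorphism is preserved by exactness), so $D \tensor - : \mathfrak{M}^{flat}_{\textnormal{ctr}} \to \mathfrak{M}^{inj}_{\textnormal{co}}$ is left Quillen; its right adjoint is $\sheafhom_{qc}(D,-)$.

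For the final statement --- the affine case $D \tensor - : \mathfrak{M}^{proj}_{\textnormal{ctr}} \to \mathfrak{M}^{inj}_{\textnormal{co}}$ --- I would deduce it from what was just proved together with Theorem~\ref{them-contra-model}$'$. Over a commutative Noetherian ring $R$ we have the identity $\textnormal{Ho}(\mathfrak{M}^{proj}_{\textnormal{ctr}}) = \textnormal{Ho}(\mathfrak{M}^{flat}_{\textnormal{ctr}})$, and more: the two model structures have the same weak equivalences (their trivial objects both being $\class{W}_{\textnormal{ctr}}$) while the cofibrations of $\mathfrak{M}^{proj}_{\textnormal{ctr}}$ (monomorphisms with cokernel in $\dwclass{P}$) are a subclass of those of $\mathfrak{M}^{flat}_{\textnormal{ctr}}$ (monomorphisms with cokernel in $\dwclass{F}$), since projective complexes are degreewise flat. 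Thus the identity functor $\mathfrak{M}^{proj}_{\textnormal{ctr}} \to \mathfrak{M}^{flat}_{\textnormal{ctr}}$ is left Quillen, and composing with the left Quillen functor $D \tensor -$ already established yields that $D \tensor - : \mathfrak{M}^{proj}_{\textnormal{ctr}} \to \mathfrak{M}^{inj}_{\textnormal{co}}$ is left Quillen. (One can also argue directly: if $F \in \dwclass{P}$ then each $F_n$ is projective, hence flat, so $D \tensor F \in \dwclass{I}$ by Lemma~\ref{lemma-tensor-injectives}(1); and if $F$ is a contractible complex of projectives then $D \tensor F$ is contractible, handling trivial cofibrations.)

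The main obstacle is really the content already absorbed into Lemma~\ref{lemma-tensor-injectives} and Murfet's~\cite[Lemma~8.2]{murfet-thesis}: namely that tensoring an injective sheaf with a flat sheaf stays injective on a semiseparated Noetherian scheme (reduced to stalks, then to the Noetherian local ring case via Lazard), and that the resulting acyclic complex of injectives is not merely coacyclic but genuinely contractible. Granting those inputs, the remainder is the bookkeeping of matching up cofibrations/trivial cofibrations with the two itemized claims, plus the short observation that finiteness of the sum $\bigoplus_{i+j=n} D_i \tensor F_j$ (from boundedness of $D$) keeps us inside the injectives.
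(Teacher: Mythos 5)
Your overall strategy -- check that $D\tensor -$ sends cofibrations to cofibrations and trivial cofibrations to trivial cofibrations, reduce to claims (1) and (2), and handle the affine case by composing with the left Quillen identity functor $\mathfrak{M}^{proj}_{\textnormal{ctr}}\to\mathfrak{M}^{flat}_{\textnormal{ctr}}$ -- matches the paper's approach and works. However, there is a genuine error in the way you justify that $D\tensor -$ preserves the relevant monomorphisms and acyclicity.

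You assert that ``each $D_n$ is flat, being injective over a Noetherian scheme,'' and conclude that $D\tensor -$ is an exact functor. This is false: injective modules over a Noetherian ring (hence injective sheaves on a Noetherian scheme) are not flat in general. Already over $\Z$, the injective module $\Q$ has torsion and is not flat, and the components of a dualizing complex over $\Z$ include $\Q$. So $D\tensor -$ is not exact as a functor on $\chqcox$, and your justification collapses. The correct reason the relevant short exact sequences are preserved -- the one used in the paper -- is \emph{purity coming from flatness of the cokernel}, not any property of $D$. If $0\to X\to Y\to F\to 0$ has $F\in\dwclass{F}$, then in each degree $0\to X_j\to Y_j\to F_j\to 0$ is a pure short exact sequence because $F_j$ is flat; hence $D_i\tensor -$ preserves it for every $i$, and after assembling direct sums one gets the short exact sequence $0\to D\tensor X\to D\tensor Y\to D\tensor F\to 0$ of complexes. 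Similarly, for (2) the exactness of $D\tensor F$ when $F\in\tilclass{F}$ flows from the degreewise purity of the sequences $0\to Z_{n+1}F\to F_{n+1}\to Z_nF\to 0$ (the cycles of $F$ are flat), not from exactness of $D\tensor -$.

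A secondary point: your alternative argument for contractibility of $D\tensor F$ (``all of whose cycles $Z_n(D\tensor F)$ are\ldots built from tensor products of injectives with the flat cycles of $F$, hence injective'') is too loose as stated -- the cycles of a tensor product of complexes are not simply tensor products of cycles. The paper makes this precise by an induction on the length of $D$: it writes $D$ as an iterated extension by sphere complexes $S^k(D_k)$, observes that $S^k(D_k)\tensor F$ is exact with injective cycles (that is where the flat-cycle purity and Lemma~\ref{lemma-tensor-injectives} are used), and checks that an extension of exact complexes with injective cycles is again exact with injective cycles. You would need something like this induction (or simply to cite Murfet's Lemma~8.2 directly, as you also suggest) to make the argument airtight. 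Your observation that $(D\tensor F)_n$ is a finite direct sum because $D$ is bounded is a valid simplification of the paper's appeal to Noetherian closure of injectives under arbitrary direct sums, and your composition-with-identity argument for the affine case is a clean alternative to the paper's brief remark.
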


\begin{proof}
We are to show that $D \tensor -$ preserves cofibrations and trivial cofibrations. Cofibrations (resp. trivial cofibrations) are monomorphisms with cofibrant (resp. trivially cofibrant) cokernel. So for $i$ to be a cofibration (resp. trivial cofibration) in the flat model structure means we have a short exact sequence
$$0 \xrightarrow{} X \xrightarrow{i} Y \xrightarrow{} F \xrightarrow{} 0$$
where $F$ is a complex of flat sheaves (resp. $F$ an exact complex of sheaves with each $Z_nF$ flat). In particular, it is a pure exact sequence in each degree, and so for all pairs of integers $i, j$
we have a short exact sequence
$$0 \xrightarrow{} D_i \tensor X_j \xrightarrow{}D_i \tensor Y_j \xrightarrow{} D_i \tensor F_j \xrightarrow{} 0.$$
Since short exact sequences are closed under direct sums, it follows from the definition of tensor product, (see Equation~\eqref{equation-tensor} from Section~\ref{section-csms}), that we get a short exact sequence of chain complexes
$$0 \xrightarrow{} D \tensor X \xrightarrow{D \tensor i} D \tensor Y \xrightarrow{} D \tensor F \xrightarrow{} 0.$$
Since $D \tensor i$ is a monomorphism, it is at the very least a cofibration in the injective model structure, because \emph{everything} is cofibrant. But in the case that $F$ is an exact complex of sheaves with each $Z_nF$ flat, we need to show $D \tensor F \in \class{W}_{\textnormal{co}}$. That is, we need to show it is a coacyclic complex.

But as Murfet showed in his thesis, $D \tensor F$ is contractible, and any contractible complex is coacyclic.
It is easy enough, and enlightening, to see why $D \tensor F$ is contractible. So let us give an argument for the reader's convenience. The point is that for \emph{any} injective sheaf $I$, the complex $S^n(I) \tensor F = \Sigma^n I \tensor F$ is an exact complex and all its cycles are injective, by Lemma~\ref{lemma-tensor-injectives}. As such, $I \tensor F$ is a contractible complex with injective components. But the complex $D$ is a bounded complex of injectives, and so it is inductively build up as a finite extension of such spheres $S^n(I)$. In particular, if $k$ is the largest integer such that $D_k$ is  non-zero, then we have an obvious short exact sequence
$$0 \xrightarrow{} {}_{k-1}D  \xrightarrow{} D \xrightarrow{} S^k(D_k) \xrightarrow{} 0$$
corresponding to the inclusion ${}_{k-1}D \subseteq D$, where ${}_{k-1}D$ is the subcomplex identical to $D$ but with a 0 in degree $k$.
The simple sequence contains an isomorphism in each degree, so certainly (again by equation~\eqref{equation-tensor} in the definition of tensor product) we get another short exact sequence of chain complexes
$$0 \xrightarrow{} {}_{k-1}D \tensor F \xrightarrow{} D \tensor F \xrightarrow{} S^k(D_k) \tensor F \xrightarrow{} 0.$$
As we have observed, $S^k(D_k) \tensor F$ is an exact complex with injective cycles and by an inductive hypothesis ${}_{k-1}D \tensor F$ is too. Therefore the extension $D \tensor F $ is an exact complex with injective cycles. Therefore $D \tensor F$ is a contractible complex of injectives.
This completes the proof that $D \tensor - $ is indeed a Quillen functor. We have also proved statement (2). For statement (1), we note that direct sums of injectives are again injective in noetherian categories. So  Lemma~\ref{lemma-tensor-injectives}, along with the definition of tensor product, (again see Equation~\eqref{equation-tensor} from Section~\ref{section-csms}), proves statement (1).

For the affine case we can also consider the domain category along with the projective model structure. The proof is similar, but easier.
\end{proof}

It follows formally that the right adjoint $\sheafhom_{qc}(D,-)$ is a right Quillen functor.  That is, it preserves fibrations and trivial fibrations. But again it is even better than this. We have the following.

\begin{proposition}\label{prop-rightquillenfunctor}
Let $\mathbb{X}$ be a semiseparated Noetherian scheme admitting a dualizing complex $D$.
Then the right adjoint $\sheafhom_{qc}(D,-) : \mathfrak{M}^{inj}_{\textnormal{co}} \xrightarrow{} \mathfrak{M}^{flat}_{\textnormal{ctr}}$ is a right Quillen functor.  In fact,
\begin{enumerate}
\item $\sheafhom_{qc}(D,E)$ is a complex of flat-cotorsion sheaves whenever $E$ is a complex of injectives.
\item $\sheafhom_{qc}(D,E)$ is a contractible complex of flat-cotorsion sheaves whenever $E$ is a contractible complex of injective sheaves.~\textnormal{cf.}~\cite[Lemma~8.8]{murfet-thesis}.
\end{enumerate}
In the affine case of a commutative Noetherian ring $R$ admitting a dualizing complex $D$, then $\homcomplex_R(D,-) : \mathfrak{M}^{inj}_{\textnormal{co}}  \xrightarrow{} \mathfrak{M}^{proj}_{\textnormal{ctr}}$ is also a right Quillen functor.
\end{proposition}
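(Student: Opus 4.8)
My plan is to get the ``right Quillen functor'' assertion for free and then do the real work on the two refined statements. For the first: Proposition~\ref{prop-quillenfunctor} already shows $D \tensor - : \mathfrak{M}^{flat}_{\textnormal{ctr}} \xrightarrow{} \mathfrak{M}^{inj}_{\textnormal{co}}$ is left Quillen, and a functor right adjoint to a left Quillen functor is automatically right Quillen; likewise in the affine case, where $D \tensor_R - : \mathfrak{M}^{proj}_{\textnormal{ctr}} \xrightarrow{} \mathfrak{M}^{inj}_{\textnormal{co}}$ was shown to be left Quillen. So nothing has to be checked for the main claim, and I would devote the proof to~(1) and~(2).

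For~(1) I would argue degreewise using the description of the internal hom from Equation~\eqref{equation-hom}: for any complex $E$,
$$[\sheafhom_{qc}(D,E)]_n = \prod^{qc}_{k \in \Z} \sheafhom_{qc}(D_k, E_{k+n}).$$
When each $E_{k+n}$ is injective, every factor $\sheafhom_{qc}(D_k, E_{k+n})$ is flat-cotorsion by Lemma~\ref{lemma-tensor-injectives}, and flat-cotorsion sheaves are closed under products (again Lemma~\ref{lemma-tensor-injectives}), so each entry $[\sheafhom_{qc}(D,E)]_n$ is flat-cotorsion. (Boundedness of $D$ makes this product finite, but is not really needed for~(1).)

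For~(2) the cleanest route is to invoke that the internal hom of the closed symmetric monoidal structure on $\chqcox$ recalled in Section~\ref{section-csms} descends to the chain homotopy category $K(\mathbb{X})$; in particular $\sheafhom_{qc}(D,-)$ carries chain homotopy equivalences to chain homotopy equivalences, so a contractible complex $E$ goes to a contractible complex $\sheafhom_{qc}(D,E)$, which by~(1) consists of flat-cotorsion sheaves (cf.~\cite[Lemma~8.8]{murfet-thesis}). Alternatively --- and parallel to the proof of Proposition~\ref{prop-quillenfunctor}(2) --- I would note that a contractible complex of injectives $E$ is degreewise split exact with injective cycles, so for any injective sheaf $I$ the complex $\sheafhom_{qc}(S^{k}(I),E)$, which in degree $m$ is just $\sheafhom_{qc}(I,E_{k+m})$, remains degreewise split exact, hence contractible, with flat-cotorsion entries by Lemma~\ref{lemma-tensor-injectives}; then, writing the bounded complex $D$ as a finite iterated extension of its spheres $S^{k}(D_k)$ exactly as in Proposition~\ref{prop-quillenfunctor} and applying the contravariant additive functor $\sheafhom_{qc}(-,E)$, one exhibits $\sheafhom_{qc}(D,E)$ as a finite extension of contractible complexes along \emph{degreewise split} short exact sequences of complexes, hence as a contractible complex.

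The one genuinely delicate point is~(2): I have to be sure that $\sheafhom_{qc}(D,-)$ really preserves contractibility --- equivalently, that it respects the DG-enrichment / is a $\partial$-functor --- rather than merely preserving exactness; this is precisely what~\cite[Lemma~8.8]{murfet-thesis} records, and it is also what makes the explicit argument legitimate, since each short exact sequence of complexes used there is degreewise split. Everything else --- identifying the entries as flat-cotorsion, the $\prod^{qc}$ bookkeeping (harmless because $D$ is bounded), and the affine variant with $\homcomplex_R(D,-)$ landing in $\mathfrak{M}^{proj}_{\textnormal{ctr}}$ --- is routine once Lemma~\ref{lemma-tensor-injectives} and Proposition~\ref{prop-quillenfunctor} are in hand.
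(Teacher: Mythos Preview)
Your proposal is correct, and for the main claim and for~(1) it matches the paper almost verbatim: both deduce the right Quillen property formally from Proposition~\ref{prop-quillenfunctor}, and both compute the degree-$n$ entry of $\sheafhom_{qc}(D,E)$ via Equation~\eqref{equation-hom} and Lemma~\ref{lemma-tensor-injectives} (the paper additionally remarks that the cotorsion part of~(1) already follows from preservation of fibrant objects, but this is cosmetic).

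The genuine divergence is in~(2). You argue directly that $\sheafhom_{qc}(D,-)$ preserves chain homotopy equivalences --- either by invoking the DG-enrichment or by the explicit sphere-filtration argument dual to Proposition~\ref{prop-quillenfunctor}(2). The paper instead stays inside the model-category formalism: a contractible complex of injectives is \emph{trivially fibrant} in $\mathfrak{M}^{inj}_{\textnormal{co}}$, a right Quillen functor sends trivially fibrant objects to trivially fibrant objects, so $\sheafhom_{qc}(D,E) \in \class{W}_{\textnormal{ctr}} \cap \dwclass{C}$; combined with~(1) this lands in the core $\dwclass{F}\cap\class{W}_{\textnormal{ctr}}\cap\dwclass{C}$, which is precisely the class of contractible complexes of flat-cotorsion sheaves. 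The paper's route is slicker and avoids any hands-on manipulation of homotopies or filtrations, at the cost of appealing to the identification of the core (which rests on $\tilclass{F}\cap\dwclass{C}$ being contractible, ultimately Theorem~\ref{them-cotorsion-complexes-scheme}). Your route is self-contained and makes the preservation of contractibility transparent without invoking the model structure, which is a useful complement.
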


\begin{proof}
Since $D \tensor -$ is left Quillen, it is automatic that $\sheafhom_{qc}(D,-)$ is right Quillen. In particular, $\sheafhom_{qc}(D,E)$ must be a complex of cotorsion sheaves whenever $E$ is a complex of injective sheaves. But Lemma~\ref{lemma-tensor-injectives}, along with Equation~\eqref{equation-hom} from Section~\ref{section-csms} where we gave the definition of $\sheafhom_{qc}$, tells us that $\sheafhom_{qc}(D,E)$ is a complex of flat-cotorsion sheaves whenever $E$ is a complex of injective sheaves. In other words, $\sheafhom_{qc}(D,E) \in \dwclass{F}\cap\dwclass{C}$. Moreover, should $E$ be contractible (so trivially fibrant), then it follows that
$$\sheafhom_{qc}(D,E) \in \dwclass{F}\cap\class{W}_{ctr}\cap\dwclass{C}.$$ But this intersection is precisely the class of contractible complexes with flat-cotorsion components.
\end{proof}

Note that Propositions~\ref{prop-quillenfunctor} and~\ref{prop-rightquillenfunctor} are saying that the Quillen adjunction exchanges bifibrant objects between the two model structures.

Now that we've established that the dualizing complex induces a Quillen adjunction, we turn to showing that it is a Quillen equivalence. This requires that we understand when the unit and counit of the adjunction are weak equivalences.

\begin{proposition}\label{prop-weak-equivvs}
Let $\mathbb{X}$ be a semiseparated Noetherian scheme admitting a dualizing complex $D$.
\begin{enumerate}
\item For all complexes $F$ of flat sheaves, the unit $\eta_F : F \xrightarrow{} \sheafhom_{qc}(D,D\tensor F)$ is a weak equivalence in $\mathfrak{M}^{flat}_{\textnormal{ctr}}$ if and only if it is a pure homology isomorphism. Furthermore, if $F$ is a complex of flat-cotorsion sheaves, this happens if and only if $\eta_F$ is a homotopy equivalence.
\item For all complexes $E$ of injective sheaves, the counit $\epsilon_E : D \tensor \sheafhom_{qc}(D,E) \xrightarrow{} E$ is a weak equivalence in $\mathfrak{M}^{inj}_{\textnormal{co}}$ if and only if $\epsilon_E$ is a homotopy equivalence.
\end{enumerate}
In the affine case $\mathbb{X}= Spec(R)$ of a commutative Noetherian ring $R$ admitting a dualizing complex $D$ we also have
\begin{enumerate}
 \setcounter{enumi}{2}
 \item  For all complexes $P$ of projective modules, not only does (1) apply, but we also have the following:
 Let $\bar{\eta}_P$ be any lift as shown where the bottom row is a cofibrant replacement sequence in $\mathfrak{M}^{proj}_{\textnormal{ctr}}$, that is, a short exact sequence with $dw\widetilde{P}\big (\homcomplex_R(D, D \otimes P)\big ) \in \dwclass{P}$ and $W \in \class{W}_{\textnormal{ctr}}$.
$$\begin{tikzcd}
&  & P \arrow["\eta_P"]{d} \arrow[dashed, "\bar{\eta}_P" ']{dl} \\
W \arrow[tail]{r}  & dw\widetilde{P}\big (\homcomplex_R(D, D \otimes P)\big )  \arrow[two heads]{r} & \homcomplex_R(D, D \otimes P)
\end{tikzcd}$$
Then $\eta_P$ is a weak equivalence in both $\mathfrak{M}^{proj}_{\textnormal{ctr}}$ and $\mathfrak{M}^{flat}_{\textnormal{ctr}}$ if and only if $\bar{\eta}_P$ is a weak equivalence, and this happens if and only if $\bar{\eta}_P$ is a homotopy equivalence.
\end{enumerate}
\end{proposition}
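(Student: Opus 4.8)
The plan is to notice that every unit and counit in the statement is a morphism between \emph{bifibrant} objects of the model structure in which its behaviour is being tested, and then to apply the standard fact that a morphism between bifibrant objects of a model category is a weak equivalence if and only if it is a homotopy equivalence. The ingredients are: Propositions~\ref{prop-quillenfunctor}(1) and~\ref{prop-rightquillenfunctor}(1), by which $D \tensor -$ sends complexes of flats to complexes of injectives and $\sheafhom_{qc}(D,-)$ sends complexes of injectives to complexes of flat-cotorsion sheaves; the fact that the bifibrant objects of $\mathfrak{M}^{flat}_{\textnormal{ctr}} = (\dwclass{F}, \class{W}_{\textnormal{ctr}}, \dwclass{C})$, of $\mathfrak{M}^{inj}_{\textnormal{co}} = (All, \class{W}_{\textnormal{co}}, \dwclass{I})$, and of $\mathfrak{M}^{proj}_{\textnormal{ctr}} = (\dwclass{P}, \class{W}_{\textnormal{ctr}}, All)$ are, respectively, the complexes of flat-cotorsion sheaves, the complexes of injectives, and the complexes of projectives; and the observation, recalled in Section~\ref{sec-models}, that in each of these the homotopy relation between bifibrant objects is ordinary chain homotopy, so that the corresponding homotopy category of bifibrant objects is $K(Flat\text{-}Cot)$, $K(Inj)$, resp.\ $K(Proj)$.

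Granting this, part (2) is immediate: $E$ is a complex of injectives, hence bifibrant in $\mathfrak{M}^{inj}_{\textnormal{co}}$, and $\sheafhom_{qc}(D,E)$ is a complex of flat-cotorsion sheaves (Proposition~\ref{prop-rightquillenfunctor}(1)), in particular of flats, so $D \tensor \sheafhom_{qc}(D,E)$ is a complex of injectives (Proposition~\ref{prop-quillenfunctor}(1)) and thus also bifibrant; hence $\epsilon_E$ is a morphism between bifibrant objects. For part (1) the only extra point is that a complex of flats $F$ is cofibrant but in general not fibrant in $\mathfrak{M}^{flat}_{\textnormal{ctr}}$. Still, $\sheafhom_{qc}(D, D \tensor F)$ \emph{is} bifibrant (Propositions~\ref{prop-quillenfunctor}(1) and~\ref{prop-rightquillenfunctor}(1)), so $\eta_F$ is a morphism of $\cha{F}$, hence a morphism of the exact model structure $(\dwclass{F}, \tilclass{F}, \dwclass{FC})$ to which $\mathfrak{M}^{flat}_{\textnormal{ctr}}$ restricts (Section~\ref{sec-models}, after Theorem~\ref{them-contra-model-scheme}). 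Its homotopy category is the derived category of flats $\class{D}(Flat) = K(Flat)/\tilclass{F}$, so --- since a morphism of a model category is a weak equivalence exactly when it becomes invertible in the homotopy category, and $\tilclass{F}$ is a thick subcategory of $K(Flat)$ --- the map $\eta_F$ is a weak equivalence precisely when its mapping cone $\operatorname{cone}(\eta_F)$, which is a complex of flats, lies in $\tilclass{F}$, that is, precisely when $\eta_F$ is a pure homology isomorphism. If moreover $F \in \dwclass{FC}$, then $F$ is itself bifibrant, and ``weak equivalence'' upgrades to ``chain homotopy equivalence'' by the principle of the first paragraph.

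For part (3), note first that $P$ is flat, so (1) applies to it. In $\mathfrak{M}^{proj}_{\textnormal{ctr}}$ both $P$ and the degreewise projective complex $dw\widetilde{P}\bigl(\homcomplex_R(D,D\tensor P)\bigr)$ are bifibrant. Write $q$ for the epimorphism $dw\widetilde{P}\bigl(\homcomplex_R(D,D\tensor P)\bigr) \to \homcomplex_R(D,D\tensor P)$ in the displayed cofibrant replacement sequence; its kernel $W$ lies in $\class{W}_{\textnormal{ctr}}$, so $q$ is a trivial fibration of $\mathfrak{M}^{proj}_{\textnormal{ctr}}$, and since $0 \to P$ is a cofibration a lift $\bar{\eta}_P$ with $q\,\bar{\eta}_P = \eta_P$ exists; any two lifts are homotopic, so the conclusion does not depend on the choice of $\bar{\eta}_P$. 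Being a morphism between bifibrant objects of $\mathfrak{M}^{proj}_{\textnormal{ctr}}$, $\bar{\eta}_P$ is a weak equivalence iff a homotopy equivalence; and since $q$ is a weak equivalence, two-out-of-three gives that $\eta_P$ is a weak equivalence in $\mathfrak{M}^{proj}_{\textnormal{ctr}}$ iff $\bar{\eta}_P$ is. Finally, $\mathfrak{M}^{proj}_{\textnormal{ctr}}$ and $\mathfrak{M}^{flat}_{\textnormal{ctr}}$ share the same class $\class{W}_{\textnormal{ctr}}$ of trivial objects (Theorem~\ref{them-contra-model}), and two abelian model structures with the same class of trivial objects have the same weak equivalences, so for $\eta_P$ the conditions ``weak equivalence in $\mathfrak{M}^{proj}_{\textnormal{ctr}}$'' and ``weak equivalence in $\mathfrak{M}^{flat}_{\textnormal{ctr}}$'' coincide.

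The proof is mostly bookkeeping, but two structural facts carry it and are where I would take the most care. The first is that the homotopy relation on bifibrant objects is literally chain homotopy in each of the three model structures, so that their homotopy categories really are $K(Inj)$, $K(Flat\text{-}Cot)$, and $K(Proj)$; this is already built into the descriptions recalled in Section~\ref{sec-models} (going back to Becker and Gillespie). The second, used in part (1), is the identification of the weak equivalences of the exact model structure on $\cha{F}$ with the pure homology isomorphisms --- equivalently, the chain maps whose cone is a pure acyclic complex of flats --- which follows from the description $\class{D}(Flat) = K(Flat)/\tilclass{F}$ in Section~\ref{sec-models}. Everything else is the elementary package: two-out-of-three; the weak equivalences of an abelian model structure depend only on its class of trivial objects; and a map between bifibrant objects is a weak equivalence iff a homotopy equivalence.
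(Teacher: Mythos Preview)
Your proof is correct and takes a genuinely different, more conceptual route than the paper's. The paper proves each part by explicitly factoring the map in question as a cofibration followed by a trivial fibration inside the relevant model structure, then uses the hereditary property to show that the kernel of the trivial fibration is in fact contractible, and finally analyses when the cokernel of the cofibration lies in $\tilclass{F}$ (for part~(1)) or $\tilclass{I}$ (for part~(2)); the pure homology isomorphism statement is obtained by tensoring the factorization diagram with an arbitrary $X$ and chasing. You bypass this factorization entirely: for parts~(2), (3), and the flat-cotorsion case of~(1) you invoke directly the standard fact that a map between bifibrant objects is a weak equivalence iff a homotopy equivalence, together with Propositions~\ref{prop-quillenfunctor}(1) and~\ref{prop-rightquillenfunctor}(1) to verify bifibrancy; for the general case of~(1) you pass to the restricted exact model structure on $\cha{F}$, identify its homotopy category as the Verdier quotient $K(Flat)/\tilclass{F}$, and read off that $\eta_F$ is a weak equivalence iff its cone lies in $\tilclass{F}$, which is exactly the pure homology isomorphism condition. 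Your argument is shorter and makes clearer why the statements hold, at the cost of relying on slightly more abstract model-category and triangulated-category facts (Verdier quotients detect isomorphisms via cones; the restricted and ambient model structures share weak equivalences on $\cha{F}$); the paper's explicit factorization has the virtue of being self-contained and of exhibiting concretely the contractible ``error term'' in each case.
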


\begin{proof}
Proof of (1). Let $F$ be a complex of flat sheaves. Write $$H := \sheafhom_{qc}(D,D\tensor F),$$ so that the unit is $\eta_F : F \xrightarrow{} H$.
By a pure homology isomorphism we mean to show that $X\tensor F \xrightarrow{X \tensor \eta_F} X\tensor H$ is a homology isomorphism for all complexes of sheaves $X$.
Start by using the model structure $\mathfrak{M}^{flat}_{\textnormal{ctr}}$ to factor the unit as $\eta_F = pi$ where $i$ is a cofibration and $p$ is a trivial fibration. So then $Q := \cok{i}$ is a complex of flats and $C := \ker{p} \in \class{W}_{\textnormal{ctr}}\cap\dwclass{C}$. We have a commutative diagram with short exact sequences as shown.
$$\begin{tikzcd}
F  \arrow[rd, "i" ', tail]  \arrow[rr, "\eta_F"] &   & H  \\
& Z \arrow[ru, "p" ', two heads] \arrow[rd, two heads] & \\
C \arrow[ru, tail] & & Q \\
\end{tikzcd}$$
Since $Z$ is an extension of two complexes of flats, we have $Z \in \dwclass{F}$ too. By the hereditary condition it follows that $C$ too is a complex of flats. Therefore, $C$ is in the core $\dwclass{F}\cap\class{W}_{\textnormal{ctr}}\cap\dwclass{C}$, and so must be a contractible complex of flat-cotorsion sheaves. Note that the short exact sequences are degreewise pure. So applying, for any complex $X$, the functor $X \tensor -$ will yield a similar commutative diagram with the two short exact sequences fixed. Note that $X \tensor p$ is a homology isomorphism since $X \tensor C$ must be exact. By the 2 out of 3 property, $X \tensor \eta_F$ will be a homology isomorphism if and only if $X \tensor i$ is a homology isomorphism. This happens if and only if $X \tensor Q$ is exact for all $X$ and this happens if and only if $Q \in \tilclass{F}$; see~\cite[Prop.~3.4(iv)]{murfet-thesis}. So, $X \tensor \eta_F$ will be a homology isomorphism if and only if $i$ is a trivial cofibration which in turn is equivalent to $\eta_F$ being a weak equivalence.

Now furthermore, suppose $F$ is a complex of flat-cotorsion sheaves. Then by Propositions~\ref{prop-quillenfunctor} and~\ref{prop-rightquillenfunctor} we get that $H$ too is a complex of flat-cotorsion sheaves. In the argument above, the hereditary condition implies that $C, Z,$ and $Q$ all must be complexes of flat-cotorsion sheaves. In particular, $Q \in \dwclass{F}\cap\class{W}_{\textnormal{ctr}}\cap\dwclass{C}$. So both $C$ and $Q$ are contractible complexes of flat-cotorsion sheaves. It means $i$ and $p$, and therefore $\eta_F$ are all homotopy equivalences. This is an if and only if: No matter what, $p$ is a homotopy equivalence. Since we know $i$ is a degreewise split monic, it is a homotopy equivalence if and only if its cokernel is contractible.

The proof of (2) is similar. Let $E$ be a complex of injective sheaves and set $T = D \tensor \sheafhom_{qc}(D,E)$. We note that $T$ is also a complex of injective sheaves. Factor the counit $\epsilon_E : T \xrightarrow{} E$ in $\mathfrak{M}^{inj}_{\textnormal{co}}$ as shown
$$\begin{tikzcd}
T  \arrow[rd, "j" ', tail]  \arrow[rr, "\epsilon_E"] &   & E  \\
& Y \arrow[ru, "q" ', two heads] \arrow[rd, two heads] & \\
J \arrow[ru, tail] & & W \\
\end{tikzcd}$$
where $J$ is a contractible complex of injectives (thus $q$ is a homotopy equivalence). Then $\epsilon_E$ is a weak equivalence if and only if $W \in \class{W}_\textnormal{co}$. But $Y$ is an extension of complexes of injectives, so the hereditary condition implies $W$ is too. Thus $\epsilon_E$ is a weak equivalence if and only if $W \in \class{W}_\textnormal{co} \cap \dwclass{I} = \tilclass{I}$. That is, if and only if $W$ is a contractible complex of injectives. Since $j$ is a degreewise split monic, it is a homotopy equivalence if and only if its cokernel is contractible. Since $\class{W}_\textnormal{co}$ contains all contractible complexes, we conclude $\epsilon_E$ is a weak equivalence if and only if it is a homotopy equivalence.

To prove (3), first recall that $\mathfrak{M}^{proj}_{\textnormal{ctr}}$ and $\mathfrak{M}^{flat}_{\textnormal{ctr}}$ have the same trivial objects, therefore same weak  equivalences. Note that the $W$ in the short exact sequence is trivial, so by the 2 out of 3 axiom, $\eta_P$ is a weak equivalence if and only if $\bar{\eta}_P$ is a weak equivalence. Now do an argument similar in spirit to paragraph one, but with $\bar{\eta}_P$ in place of $\eta_F$ and by using the $\mathfrak{M}^{proj}_{\textnormal{ctr}}$ model structure instead of $\mathfrak{M}^{flat}_{\textnormal{ctr}}$. We conclude that $\bar{\eta}_P$ is a weak equivalence if and only if it is a homotopy equivalence.
\end{proof}

\begin{proposition}\label{prop-units-counits-weak-equivalences}
Let $\mathbb{X}$ be a semiseparated Noetherian scheme admitting a dualizing complex $D$.
\begin{enumerate}
\item For all complexes $F$ of flat sheaves, the unit
$$\eta_F : F \xrightarrow{} \sheafhom_{qc}(D, D \otimes F)$$ is a pure homology isomorphism, equivalently, a weak equivalence in $\mathfrak{M}^{flat}_{\textnormal{ctr}}$. Furthermore, if $F$ is a complex of flat-cotorsion sheaves then $\eta_F$ is a chain homotopy equivalence, equivalently, a weak equivalence in $\mathfrak{M}^{flat}_{\textnormal{ctr}}$.
\item For all complexes $E$ of injective sheaves, the counit
$$\epsilon_E : D \otimes \sheafhom_{qc}(D,E) \xrightarrow{} E$$ is a chain homotopy equivalence, equivalently, a weak equivalence in $\mathfrak{M}^{inj}_{\textnormal{co}}$.
\end{enumerate}
In the affine case $\mathbb{X}=Spec(R)$ of a commutative Noetherian ring $R$ admitting a dualizing complex $D$ we also have
\begin{enumerate}
 \setcounter{enumi}{2}
 \item  For all complexes $P$ of projective modules, the unit
$$\eta_P : P \xrightarrow{} \homcomplex_R(D, D \otimes P)$$ is a weak equivalence in both $\mathfrak{M}^{proj}_{\textnormal{ctr}}$ and $\mathfrak{M}^{flat}_{\textnormal{ctr}}$, equivalently, any lift $\bar{\eta}_P$ as in Proposition.~\ref{prop-weak-equivvs} is a chain  homotopy equivalence.
\end{enumerate}
Moreover, the diagram below is a commutative diagram of adjoint equivalences.
\[\begin{tikzpicture}[node distance=3.5cm]
\node (A) {$K(Proj)$};
\node (B) [right of=A] {$\class{D}(Flat)$};
\node (C) [right of=B] {$K(Inj)$};
\node (D) [above of=B] {$K(Flat\text{-}Cot) $};
\draw[->] (A.15) to node[above]{\small $I$} (B.165);

\draw[<-] (A.346) to node [below]{\small $dw\widetilde{P}$} (B.195);
\draw[->] (B.15) to node[above]{\small $D\tensor -$} (C.165);

\draw[<-] (B.346) to node [below]{\small $\sheafhom_{qc}(D,-)$} (C.195);
\draw[->] (D.350) to node[above,midway,sloped,rotate=3]{\small $D\tensor -$}  (C.30);

\draw[->] (C.130) to node[below,midway,sloped]{\small $\sheafhom_{qc}(D,-)$} (D.325);
\draw[<-] (D.223) to node[left]{\small $dw\widetilde{C}$}  (B.137);

\draw[->] (D.310) to node[right]{\small $I$}  (B.50);
\draw[->] (D.210) to node[below,midway,sloped,rotate=3]{\small $dw\widetilde{P}$}  (A);
\draw[<-] (D.190) to node[above,midway,sloped,rotate=3]{\small $dw\widetilde{C}$}  (A.150);
\end{tikzpicture}\]
The commutative triangle on the right holds for the general case of a semiseparated Noetherian scheme $\mathbb{X}$ admitting a dualizing complex $D$.
\end{proposition}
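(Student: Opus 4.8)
The plan is to read off items (1), (2) and (3) from Proposition~\ref{prop-weak-equivvs}, which has already reduced every assertion to the single bare claim that the unit $\eta_F$ is always a pure homology isomorphism and the counit $\epsilon_E$ is always a weak equivalence in $\mathfrak{M}^{inj}_{\textnormal{co}}$. So the real work is to establish those two facts, and the input for them is the theorem of Iyengar--Krause~\cite{iyengar-krause} in the affine case, extended by Murfet~\cite{murfet-thesis} to semiseparated Noetherian schemes: $D\tensor- \colon \class{D}(Flat) \to K(Inj)$ is a triangulated equivalence whose quasi-inverse is induced by $\sheafhom_{qc}(D,-)$.

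First I would descend the ordinary adjunction $(D\tensor-,\, \sheafhom_{qc}(D,-))$ on $\chqcox$ to the homotopy categories. By Proposition~\ref{prop-quillenfunctor}, $D\tensor-$ carries $K(Flat)$ into $K(Inj)$ and kills $\tilclass{F}$ (it sends pure acyclic complexes of flats to contractible ones), hence factors through $\class{D}(Flat)=K(Flat)/\tilclass{F}$; by Proposition~\ref{prop-rightquillenfunctor}, $\sheafhom_{qc}(D,-)$ carries $K(Inj)$ into $K(Flat)$, landing in flat-cotorsion complexes. Since $D\tensor T$ is contractible for $T\in\tilclass{F}$, the complexes $\sheafhom_{qc}(D,E)$ are right orthogonal to $\tilclass{F}$ in $K(\mathbb{X})$, so the chain-level adjunction passes to an adjunction $D\tensor- \colon \class{D}(Flat)\to K(Inj)$ with right adjoint (the functor induced by) $\sheafhom_{qc}(D,-)$; and because $D\tensor F$ is already a complex of injectives and $\sheafhom_{qc}(D,E)$ already a complex of flats, \emph{no} cofibrant or fibrant replacements intervene, so the unit and counit of this adjunction are represented, on complexes of flats and of injectives, by the chain maps $\eta_F$ and $\epsilon_E$ themselves. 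Murfet's theorem says the left adjoint is an equivalence, whence the right adjoint is a quasi-inverse and the unit and counit are natural isomorphisms. Thus $\eta_F$ is an isomorphism in $\class{D}(Flat)\cong\textnormal{Ho}(\mathfrak{M}^{flat}_{\textnormal{ctr}})$ (Theorem~\ref{them-contra-model-scheme}), i.e.\ a weak equivalence in $\mathfrak{M}^{flat}_{\textnormal{ctr}}$, i.e.\ --- by Proposition~\ref{prop-weak-equivvs}(1) --- a pure homology isomorphism, upgraded by the same proposition to a chain homotopy equivalence when $F$ is a complex of flat-cotorsion sheaves; this is (1). Likewise $\epsilon_E$ is an isomorphism in $K(Inj)\cong\textnormal{Ho}(\mathfrak{M}^{inj}_{\textnormal{co}})$ (Proposition~\ref{prop-coderived}), and an isomorphism in $K(Inj)$ between complexes of injectives is by definition a chain homotopy equivalence, so Proposition~\ref{prop-weak-equivvs}(2) gives (2). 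For (3), a complex $P$ of projectives is a complex of flats, so (1) makes $\eta_P$ a weak equivalence in $\mathfrak{M}^{flat}_{\textnormal{ctr}}$, hence in $\mathfrak{M}^{proj}_{\textnormal{ctr}}$ since these share the same trivial objects (Theorem~\ref{them-contra-model}), and Proposition~\ref{prop-weak-equivvs}(3) translates this into the homotopy equivalence of any lift $\bar{\eta}_P$.

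For the commutative diagram of adjoint equivalences, each opposing pair of arrows is already an adjoint equivalence: $(I, dw\widetilde{P})$ and $(I, dw\widetilde{C})$ realize $K(Proj)\cong\class{D}(Flat)\cong K(Flat\text{-}Cot)$ through $\mathfrak{M}^{proj}_{\textnormal{ctr}}$, $\mathfrak{M}^{flat}_{\textnormal{ctr}}$ and their common trivial objects (Theorems~\ref{them-contra-model-scheme} and~\ref{them-contra-model}), while the two horizontal $(D\tensor-,\,\sheafhom_{qc}(D,-))$ pairs are the equivalence just established and its restriction to flat-cotorsion complexes (an equivalence because $dw\widetilde{C}$ and $I$ are mutually inverse there and $D\tensor-$ commutes with them up to the natural weak equivalence between a complex of flats and its flat-cotorsion replacement). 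Commutativity of the triangles and squares is then formal: on the subcategory of flat-cotorsion complexes (respectively projective complexes, in the affine corner) every replacement functor appearing is naturally weakly equivalent to the identity, and $D\tensor-$ carries the comparison maps, which are trivial cofibrations of complexes of flats, to chain homotopy equivalences by Proposition~\ref{prop-quillenfunctor}(2); hence all the composites agree in the homotopy categories. Only the right-hand triangle avoids any affine input, which is the closing sentence of the statement.

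The step I expect to require the most care --- the main obstacle --- is the identification in the second paragraph: checking that Murfet's equivalence is precisely \emph{this} adjunction, with unit and counit literally the classes of $\eta_F$ and $\epsilon_E$, rather than merely \emph{some} triangulated equivalence of the right variance. The crucial simplification is that $D\tensor(\text{flat})$ is injective and $\sheafhom_{qc}(D,\text{injective})$ is flat-cotorsion, so the round trips never leave the bifibrant world and no replacement functors are needed; granting that, the remainder is an application of Proposition~\ref{prop-weak-equivvs} together with a routine diagram chase.
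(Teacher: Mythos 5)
Your proposal is correct, and for the bulk of items (1)--(3) it is more streamlined than the paper's own argument. The most interesting difference is in the first assertion of (1): you read off that $\eta_F$ becomes an isomorphism in $\class{D}(Flat) \cong \textnormal{Ho}(\mathfrak{M}^{flat}_{\textnormal{ctr}})$ directly from Murfet's equivalence, and then conclude it is a weak equivalence. The paper instead only reads the iso condition off at the level of $K(Flat\text{-}Cot)$ (where ``iso in the homotopy category'' literally means ``chain homotopy equivalence'', no further translation needed), thereby getting the flat-cotorsion case and item (2) immediately, and then proves the general flat case by a hands-on 2-out-of-3 argument: factor $0 \to F \to C \to \tilde{F} \to 0$ with $C$ flat-cotorsion and $\tilde{F}$ pure-acyclic, apply $\sheafhom_{qc}(D,D\tensor -)$, and use that $j$, $\hat{j}$, and $\eta_C$ are already known weak equivalences. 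Your shortcut is valid, but note it silently invokes the saturation property of model categories (Hovey~\cite[Theorem~1.2.10(iv)]{hovey-model-categories}): $\gamma(f)$ an isomorphism in $\textnormal{Ho}$ if and only if $f$ is a weak equivalence. It would be worth flagging that step, since your phrase ``i.e.\ a weak equivalence'' reads as if the identification were tautological, whereas it rests on a substantive theorem. Similarly, for (3) the paper deduces $\eta_P$ is a weak equivalence from the Iyengar--Krause statement that $\bar{\eta}_P$ is a homotopy equivalence (using Proposition~\ref{prop-weak-equivvs}(3) in the forward direction), whereas you go the other way round from (1) and the shared weak equivalences; both are legitimate, and in fact yours makes the affine case a genuine corollary of the scheme case rather than an independent appeal to~\cite{iyengar-krause}. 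One more small point: in establishing the descent of the adjunction to $\class{D}(Flat)$, the statement that $\sheafhom_{qc}(D,E)$ is right orthogonal to $\tilclass{F}$ in $K(\mathbb{X})$ should be attributed to Theorem~\ref{them-cotorsion-complexes-scheme} (a map from a pure acyclic complex of flats to a complex of cotorsion sheaves is null-homotopic); you gesture at this but do not cite it. Your treatment of the commutative diagram is compressed compared to the paper's, which walks through the right triangle via the observation that $\sheafhom_{qc}(D,-)$ factors through $K(Flat\text{-}Cot)$ and through the left triangle via the identity-functor Quillen adjunction $\mathfrak{M}^{proj}_{\textnormal{ctr}} \to \mathfrak{M}^{flat}_{\textnormal{ctr}}$; what you have is plausible but would need to be spelled out at the level the paper does.
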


\begin{remark}
Note that the bottom composite is  Iyengar-Krause's initial finding, although they had the larger category $K(Flat)$ in the middle.  We see that $\class{D}(Flat)$ is equivalent to the reflective subcategory $K(Flat\text{-}Cot) \subseteq K(Flat)$. On the other hand, $\class{D}(Flat)$ is equivalent to the coreflective subcategory $K(Proj) \subseteq K(Flat)$.
\end{remark}

\begin{proof}
We learned from Murfet's thesis, see~\cite[Theorem~8.4]{murfet-thesis}, that
$$\class{D}(Flat) \xrightarrow{D\tensor -} K(Inj)$$ is an equivalence of categories, with adjoint inverse $\sheafhom_{qc}(D,-)$.
The model structure $\mathfrak{M}^{flat}_{\textnormal{ctr}} = (\dwclass{F},  \class{W}_{\textnormal{ctr}},\dwclass{C})$ of Theorem~\ref{them-contra-model-scheme} is such that the inclusion $$I : K(Flat\text{-}Cot) \hookrightarrow \class{D}(Flat)$$ is an equivalence of categories whose inverse is given by fibrant replacement. Since fibrant replacement of an object is calculated precisely by taking a special $\dwclass{C}$-preenvelope, we will denote this functor by $$dw\widetilde{C} : \class{D}(Flat) \xrightarrow{} K(Flat\text{-}Cot).$$
The image of the functor $\sheafhom_{qc}(D,-)$ is contained in $K(Flat\text{-}Cot)$, by Proposition~\ref{prop-rightquillenfunctor}(1), so we have a functor $$\sheafhom_{qc}(D,-) : K(Inj) \xrightarrow{} K(Flat\text{-}Cot).$$ Evidently, its left adjoint is $D \tensor -$, and we conclude that the right triangle of the diagram  is a commutative diagram of adjunctions.
It follows that $$D\tensor - : K(Flat\text{-}Cot) \xrightarrow{} K(Inj)$$ is an adjoint equivalence of categories with inverse $$\sheafhom_{qc}(D,-) : K(Inj) \xrightarrow{} K(Flat\text{-}Cot).$$ In particular, the unit and counit of the adjunction are isomorphisms, which translates to the statements in (2) and the second part of (1); and using Proposition~\ref{prop-weak-equivvs} to infer we have weak equivalences.

But we need to prove the first statement of (1). Given any complex $F$ of flat sheaves, use the model structure $\mathfrak{M}^{flat}_{\textnormal{ctr}}$ to write a short exact sequence $$0 \xrightarrow{} F \xrightarrow{j} C \xrightarrow{} \tilde{F} \xrightarrow{} 0$$
where $\tilde{F} \in \tilclass{F}$ and $C \in \dwclass{F}\cap\dwclass{C}$. It is degreewise pure, so
$$0 \xrightarrow{} D \tensor F \xrightarrow{D \tensor j} D \tensor C \xrightarrow{} D \tensor \tilde{F} \xrightarrow{} 0$$
is still exact. In fact, it is degreewise split with injective components, and $D \tensor \tilde{F}$ is a contractible complex of injectives, by Proposition~\ref{prop-quillenfunctor}. Therefore, applying $\sheafhom_{qc}(D,-)$ we get another short exact sequence
$$0 \xrightarrow{} \sheafhom_{qc}(D,D \tensor F) \xrightarrow{\hat{j}} \sheafhom_{qc}(D,D \tensor C) \xrightarrow{}\sheafhom_{qc}(D, D \tensor \tilde{F}) \xrightarrow{} 0$$
and $\sheafhom_{qc}(D, D \tensor \tilde{F})$ is contractible with flat-cotorsion components, by Proposition~\ref{prop-rightquillenfunctor}.
We get a commutative diagram with short exact sequences for rows:
$$\begin{CD}
      F    @>j>>   C    @>>>    \tilde{F}    \\
  @V \eta_F VV     @V \eta_C VV      @V \eta_{\tilde{F}} VV   \\
  \sheafhom_{qc}(D,D \tensor F)  @>\hat{j} >>   \sheafhom_{qc}(D,D \tensor C)   @>>>    \sheafhom_{qc}(D, D \tensor \tilde{F})   \\
    \end{CD}$$
Now $j$ and $\hat{j}$ are each weak equivalences in $\mathfrak{M}^{flat}_{\textnormal{ctr}}$ because each is monic with cokernel in $\tilclass{F} \subseteq \class{W}_{ctr}$. Now $\eta_C$ is also a weak equivalence by what we just proved above. So it follows from the 2 out of 3 axiom that $\eta_F$ is also a weak equivalence in $\mathfrak{M}^{flat}_{\textnormal{ctr}}$.

Let us turn to (3), the affine case. First, consider the identity functor $\chqcox \xrightarrow{1} \chqcox$. It may be thought of as a Quillen adjunction from $\mathfrak{M}^{proj}_{\textnormal{ctr}}$ to $\mathfrak{M}^{flat}_{\textnormal{ctr}}$. In doing so, the derived functors of the identity provide, when passing to the level of bifibrant objects, the equivalence on the hypotenuse of the left triangle in the diagram. This leads us to the commutative diagram of adjunctions. As in the Remark, the bottom composite is essentially Iyenar-Krause's original equivalence from~\cite{iyengar-krause}. It means that the unit of their adjunction, which in our notation is exactly $\bar{\eta}_P$, is a homotopy equivalence. So by
Proposition~\ref{prop-weak-equivvs}(3) we conclude that the unit $\eta_P$ is a weak equivalence.
\end{proof}

Now we get to our stated goal of lifting the equivalence to the full chain complex category $\chqcox$.

\begin{theorem}\label{them-Quillen-Krause-Iyengar-Murfet}
Let $\mathbb{X}$ be a semiseparated Noetherian scheme with dualizing complex $D$. Then the functor $D \tensor - : \chqcox \xrightarrow{} \chqcox$ is a Quillen equivalence from the flat model structure for the contraderived category to the injective model structure for the coderived category.
In particular, the derived adjunction $(D \tensor Q(-) , \sheafhom_{qc}(D, R(-))$ is a triangulated equivalence
$$\textnormal{Ho}(\mathfrak{M}^{flat}_{\textnormal{ctr}}) \xrightarrow{D \tensor Q(-)} \textnormal{Ho}(\mathfrak{M}^{inj}_{\textnormal{co}})$$
which restricts to a triangulated equivalence $$K(Flat\text{-}Cot) \xrightarrow{D \tensor -} K(Inj)$$  with inverse $\sheafhom_{qc}(D,-)$.

In the affine case $\mathbb{X}=Spec(R)$ of a commutative Noetherian ring $R$ admitting a dualizing complex $D$, then the functor $D \tensor_R - : \ch \xrightarrow{} \ch$ is also a Quillen equivalence from the projective model structure for the contraderived category to the injective model structure for the coderived category.
In this case it restricts to the Krause-Iyengar triangulated equivalence
$$K(Proj) \xrightarrow{D \tensor -} K(Inj)$$ with inverse $dw\widetilde{P} \circ \homcomplex_R(D,-)$.
\end{theorem}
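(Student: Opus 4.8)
The plan is to assemble the theorem from Propositions~\ref{prop-quillenfunctor}, \ref{prop-rightquillenfunctor}, and~\ref{prop-units-counits-weak-equivalences}. The one thing to keep an eye on -- and the only place any real care is needed -- is the structural observation that the left Quillen functor $D\tensor-$ already carries cofibrant objects to fibrant objects and the right Quillen functor $\sheafhom_{qc}(D,-)$ already carries fibrant objects to cofibrant objects, so that in the derived adjunction no $(\text{co})$fibrant replacement intervenes and the criterion for a Quillen equivalence collapses exactly onto the unit/counit statements already proved.

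First, Propositions~\ref{prop-quillenfunctor} and~\ref{prop-rightquillenfunctor} give that $(D\tensor-,\ \sheafhom_{qc}(D,-))$ is a Quillen adjunction from $\mathfrak{M}^{flat}_{\textnormal{ctr}}$ to $\mathfrak{M}^{inj}_{\textnormal{co}}$. To upgrade it to a Quillen equivalence I would invoke the usual criterion \cite[Proposition~1.3.13]{hovey-model-categories}: it suffices that for every cofibrant $X$ the composite $X\xrightarrow{\eta_X}\sheafhom_{qc}(D,D\tensor X)\xrightarrow{}\sheafhom_{qc}\big(D,(D\tensor X)^{\mathrm{fib}}\big)$ be a weak equivalence, and that for every fibrant $Y$ the composite $D\tensor\big(\sheafhom_{qc}(D,Y)\big)^{\mathrm{cof}}\xrightarrow{}D\tensor\sheafhom_{qc}(D,Y)\xrightarrow{\epsilon_Y}Y$ be a weak equivalence. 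The cofibrant objects of $\mathfrak{M}^{flat}_{\textnormal{ctr}}$ are exactly $\dwclass{F}$; for $X\in\dwclass{F}$, Proposition~\ref{prop-quillenfunctor}(1) makes $D\tensor X$ a complex of injectives, hence already fibrant, so the first composite is just $\eta_X$, a weak equivalence by Proposition~\ref{prop-units-counits-weak-equivalences}(1). Dually, the fibrant objects of $\mathfrak{M}^{inj}_{\textnormal{co}}$ are exactly $\dwclass{I}$; for $Y\in\dwclass{I}$, Proposition~\ref{prop-rightquillenfunctor}(1) makes $\sheafhom_{qc}(D,Y)$ a complex of flat-cotorsion sheaves, hence already cofibrant, so the second composite is just $\epsilon_Y$, a weak equivalence by Proposition~\ref{prop-units-counits-weak-equivalences}(2). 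Thus $(D\tensor-,\ \sheafhom_{qc}(D,-))$ is a Quillen equivalence. As these are hereditary abelian model structures, their homotopy categories are triangulated \cite[Corollary~1.1.15]{becker} and the derived adjunction $(D\tensor Q(-),\ \sheafhom_{qc}(D,R(-)))$ is a triangulated equivalence; identifying $\textnormal{Ho}(\mathfrak{M}^{flat}_{\textnormal{ctr}})\cong K(Flat\text{-}Cot)\cong\class{D}(Flat)$ (Theorem~\ref{them-contra-model-scheme}) and $\textnormal{Ho}(\mathfrak{M}^{inj}_{\textnormal{co}})\cong K(Inj)$ (Proposition~\ref{prop-coderived}) yields the stated equivalence.

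Next I would read off the restriction to bifibrant objects. The bifibrant objects of $\mathfrak{M}^{flat}_{\textnormal{ctr}}$ are the complexes of flat-cotorsion sheaves and those of $\mathfrak{M}^{inj}_{\textnormal{co}}$ are the complexes of injectives, and the identifications $\textnormal{Ho}(\mathfrak{M}^{flat}_{\textnormal{ctr}})\cong K(Flat\text{-}Cot)$ and $\textnormal{Ho}(\mathfrak{M}^{inj}_{\textnormal{co}})\cong K(Inj)$ are realized on these, the model-theoretic homotopy relation there being ordinary chain homotopy (as in the last paragraph of the proof of Theorem~\ref{them-exact Inj model structure}). By the remark following Proposition~\ref{prop-rightquillenfunctor}, $D\tensor-$ and $\sheafhom_{qc}(D,-)$ exchange bifibrant objects between the two model structures, so neither $Q$ nor $R$ is needed there; hence the derived equivalence restricts, on the nose, to $K(Flat\text{-}Cot)\xrightarrow{D\tensor-}K(Inj)$ with inverse $\sheafhom_{qc}(D,-)$ -- exactly the right-hand square of the diagram in Proposition~\ref{prop-units-counits-weak-equivalences}, where the units and counits are honest chain homotopy equivalences.

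For the affine case $\mathbb{X}=Spec(R)$ I would factor through the chain of Quillen equivalences $\mathfrak{M}^{proj}_{\textnormal{ctr}}\xrightarrow{\mathrm{id}}\mathfrak{M}^{flat}_{\textnormal{ctr}}\xrightarrow{D\tensor-}\mathfrak{M}^{inj}_{\textnormal{co}}$. The identity is left Quillen from the projective to the flat model of the contraderived category since $\dwclass{P}\subseteq\dwclass{F}$, and it is a Quillen equivalence because the two model structures share the trivial class $\class{W}_{\textnormal{ctr}}$, hence the same weak equivalences (Theorem~\ref{them-contra-model}); its derived functor is the identity on $\textnormal{Ho}$. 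Composing Quillen equivalences, $D\tensor_R-\colon\mathfrak{M}^{proj}_{\textnormal{ctr}}\xrightarrow{}\mathfrak{M}^{inj}_{\textnormal{co}}$ is a Quillen equivalence, with right adjoint $\homcomplex_R(D,-)$. On bifibrant objects, $D\tensor-$ still lands in complexes of injectives (Lemma~\ref{lemma-tensor-injectives}(1)), so the restricted equivalence $K(Proj)\xrightarrow{D\tensor-}K(Inj)$ is $D\tensor-$ on the nose; but now $\homcomplex_R(D,E)$ is merely flat-cotorsion and must be cofibrantly replaced inside $\mathfrak{M}^{proj}_{\textnormal{ctr}}$ by a complex of projectives, i.e.\ by a special $\dwclass{P}$-precover $dw\widetilde{P}$, so the inverse on $K(Proj)$ comes out as $dw\widetilde{P}\circ\homcomplex_R(D,-)$, recovering the equivalence of Iyengar and Krause~\cite{iyengar-krause}. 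I do not anticipate any genuine obstacle beyond this bookkeeping about which objects are already bifibrant.
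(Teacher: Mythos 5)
Your proof of the non-affine statement is essentially identical to the paper's: both invoke \cite[Proposition~1.3.13]{hovey-model-categories}(b), and both use the crucial structural fact (from Propositions~\ref{prop-quillenfunctor} and~\ref{prop-rightquillenfunctor}) that $D\tensor-$ carries cofibrant objects to objects that are already fibrant, and $\sheafhom_{qc}(D,-)$ carries fibrant objects to objects that are already cofibrant, so the criterion collapses to the unit/counit statements of Proposition~\ref{prop-units-counits-weak-equivalences}. The reading-off of the restriction to bifibrant objects is likewise the same.

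The affine case is where you take a genuinely different route, and it is a cleaner one. The paper re-runs the Hovey criterion directly for $\mathfrak{M}^{proj}_{\textnormal{ctr}}\xrightarrow{D\tensor-}\mathfrak{M}^{inj}_{\textnormal{co}}$: condition (i) is handled by Proposition~\ref{prop-units-counits-weak-equivalences}(3), and for condition (ii) the authors must identify $D\tensor dw\widetilde{P}(\homcomplex_R(D,E))\to E$ with the counit of the composed Iyengar--Krause adjunction and then re-run the $\mathfrak{M}^{inj}_{\textnormal{co}}$-half of the argument from Proposition~\ref{prop-weak-equivvs}(2). You instead observe that the identity functor $\mathfrak{M}^{proj}_{\textnormal{ctr}}\to\mathfrak{M}^{flat}_{\textnormal{ctr}}$ is a Quillen equivalence (same trivial objects, hence same weak equivalences, and $\dwclass{P}\subseteq\dwclass{F}$), and then simply compose Quillen equivalences. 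This avoids re-verifying the Hovey criterion, and the identification of the inverse on $K(Proj)$ as $dw\widetilde{P}\circ\homcomplex_R(D,-)$ falls out immediately from the observation that $\homcomplex_R(D,E)$ is flat-cotorsion but generally not a complex of projectives, so $Q=dw\widetilde{P}$ is needed. It is worth noting that the paper's Proposition~\ref{prop-units-counits-weak-equivalences}(3) itself already introduces the identity Quillen adjunction $\mathfrak{M}^{proj}_{\textnormal{ctr}}\to\mathfrak{M}^{flat}_{\textnormal{ctr}}$ to derive the left triangle of its diagram; your proof simply promotes that observation from a lemma-level tool to the organizing principle of the affine case, which makes the bookkeeping lighter. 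Both routes are correct.
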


\begin{proof}
Referring to~\cite[Proposition~1.3.13]{hovey-model-categories}(b), given a known Quillen adjunction $(F,U)$ it will be a Quillen equivalence if (i) for each cofibrant $X$ in the domain category, a natural map $X \xrightarrow{} UFX \xrightarrow{} URFX$ is a weak equivalence, and, (ii) for each fibrant $X$ in the codomain category, a natural map $FQUX \xrightarrow{} FUX \xrightarrow{} X$ is a weak equivalence. Applying this to the Quillen adjunction $(D\tensor -, \sheafhom_{qc}(D,-))$ and using the nice properties of Propositions~\ref{prop-quillenfunctor} and~\ref{prop-rightquillenfunctor} it translates to (i) For all complexes $F$ of flat sheaves, the unit
$$\eta_F : F \xrightarrow{} \sheafhom_{qc}(D, D \otimes F)$$ is a weak equivalence in $\mathfrak{M}^{flat}_{\textnormal{ctr}}$, and, (ii) For all complexes $E$ of injective sheaves, the counit
$$\epsilon_E : D \otimes \sheafhom_{qc}(D,E) \xrightarrow{} E$$ is a weak equivalence in $\mathfrak{M}^{inj}_{\textnormal{co}}$. This is what we showed in the previous propositions.

For the affine case, we still need to verify (i) and (ii) from~\cite[Proposition~1.3.13]{hovey-model-categories}(b). The translation of (i) in this case is this: For all complexes $P$ of projectives, the unit $P \xrightarrow{\eta_P} \homcomplex_R(D, D \otimes P)$ is a weak equivalence in $\mathfrak{M}^{proj}_{\textnormal{ctr}}$. This is true by Proposition~\ref{prop-units-counits-weak-equivalences}(3). Next, (ii) says: For all complexes of injective $R$-modules $E$, the map
$$D \otimes dw\widetilde{P}\big(\homcomplex_R(D,E)\big)\xrightarrow{D \tensor q_{\homcomplex_R(D,E)}}  D \otimes \homcomplex_R(D,E) \xrightarrow{\epsilon_E} E$$ is a weak equivalence in $\mathfrak{M}^{inj}_{\textnormal{co}}$. But this map is the counit of the composed adjunctions along the base of the commutative triangle in Proposition~\ref{prop-units-counits-weak-equivalences}. That is, it is the counit of the Iyengar-Krause equivalence and hence it a homotopy equivalence. The same exact type of argument that we gave in the proof of Proposition~\ref{prop-weak-equivvs}(2) shows that it is a weak equivalence in $\mathfrak{M}^{inj}_{\textnormal{co}}$.
\end{proof}

\section{Quillen equivalence of Gorenstein flats and Gorenstein injectives}\label{sec-G-equivalence}

We continue to let $\mathbb{X}$ be a semiseparated Noetherian scheme admitting a dualizing complex $D$. The goal here is to show that the model structure for $\textnormal{St}(G\textnormal{-}Flat\textnormal{-}Cot)$, the stable category of Gorenstein flat-cotorsion sheaves, is Quillen equivalent to the model structure for $\textnormal{St}(G\textnormal{-}Inj)$, the stable category of Gorenstein injective sheaves. Recall that two model structures are called \emph{Quillen equivalent} when there is a zig-zag of Quillen equivalences between them. For example, see~\cite[Section~2]{dugger-shipley}.

We will zig-zag through the F-totally acyclic flat model structure on $\chqcox$ and the totally acyclic injective model structures on $\chqcox$. Thanks to the work of Murfet and Salarian~\cite[Prop.~6.1]{murfet-salarian} it is easy to show that $D\tensor -$ is a Quillen equivalence from the former to the latter.

\begin{theorem}\label{them-Murfet-Salarian}
Let $\mathbb{X}$ be a semiseparated Noetherian scheme  with dualizing complex $D$. Then the functor $D \tensor - : \chqcox \xrightarrow{} \chqcox$ is a Quillen equivalence from $\mathfrak{M}^{flat}_{\textnormal{F-to}}$, the F-totally acyclic flat model structure to $\mathfrak{M}^{inj}_{\textnormal{to}}$, the totally acyclic injective model structure.
In particular, the derived adjunction $(D \tensor Q(-) , \sheafhom_{qc}(D, R(-))$ is a triangulated equivalence
$$\textnormal{Ho}(\mathfrak{M}^{flat}_{\textnormal{F-to}}) \xrightarrow{D \tensor Q(-)} \textnormal{Ho}(\mathfrak{M}^{inj}_{\textnormal{to}})$$
which restricts to a triangulated equivalence $$K_{F\textnormal{-}to}(Flat\text{-}Cot) \xrightarrow{D \tensor -} K_{to}(Inj)$$  with inverse $\sheafhom_{qc}(D,-)$.

In the affine case $\mathbb{X}=Spec(R)$ of a commutative Noetherian ring $R$ admitting a dualizing complex $D$, then the functor $D \tensor_R - : \ch \xrightarrow{} \ch$ is also a Quillen equivalence from $\mathfrak{M}^{proj}_{\textnormal{to}}$, the totally acyclic projective model structure, to $\mathfrak{M}^{inj}_{\textnormal{to}}$, the totally acyclic injective model structure.
In this case it restricts to the triangulated equivalence
$$K_{to}(Proj) \xrightarrow{D \tensor -} K_{to}(Inj)$$ with inverse again $dw\widetilde{P} \circ \homcomplex_R(D,-)$.
In fact,  the commutative diagram of adjoint equivalences from Proposition~\ref{prop-units-counits-weak-equivalences} restricts to the same diagram but with
$K(Proj)$, $\class{D}(Flat)$, $K(Flat\text{-}Cot)$ and $ K(Inj)$ each substituted by $K_{to}(Proj)$, $\class{D}_{F\textnormal{-}to}(Flat)$, $K_{F\textnormal{-}to}(Flat\text{-}Cot)$ and $ K_{to}(Inj)$.
\end{theorem}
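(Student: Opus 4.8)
The plan is to re-run the argument of Section~\ref{sec-Quillen-dualizing}, with $\mathfrak{M}^{flat}_{\textnormal{ctr}}$ and $\mathfrak{M}^{inj}_{\textnormal{co}}$ everywhere replaced by $\mathfrak{M}^{flat}_{\textnormal{F-to}}$ and $\mathfrak{M}^{inj}_{\textnormal{to}}$, feeding in the Murfet--Salarian result~\cite[Prop.~6.1]{murfet-salarian} in place of Murfet's Lemmas~8.2 and~8.8. The part of that result we need is: a complex $F$ of flat sheaves is F-totally acyclic if and only if $D\tensor F$ is a totally acyclic complex of injective sheaves, and $\sheafhom_{qc}(D,E)$ is an F-totally acyclic complex of flats whenever $E$ is a totally acyclic complex of injectives. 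As before, the proof splits into showing first that $D\tensor-$ is a Quillen adjunction, and then, via Hovey's criterion~\cite[Prop.~1.3.13(b)]{hovey-model-categories}, that it is a Quillen equivalence.

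First I would check that $D\tensor-\colon\mathfrak{M}^{flat}_{\textnormal{F-to}}\to\mathfrak{M}^{inj}_{\textnormal{to}}$ preserves cofibrations and trivial cofibrations. A cofibration has cokernel in ${}_I\tilclass{F}$, hence degreewise flat, so its defining short exact sequence is degreewise pure and remains exact after applying each $D_k\tensor-$; using Equation~\eqref{equation-tensor} we get that $D\tensor i$ is a monomorphism with cokernel $D\tensor(\cok i)\in\toclass{I}$ by Murfet--Salarian, and this is a cofibration since every object of $\mathfrak{M}^{inj}_{\textnormal{to}}$ is cofibrant. For a trivial cofibration the cokernel lies in ${}_I\tilclass{F}\cap\class{V}_{\textnormal{to}}$, which equals $\tilclass{F}$: both $({}_I\tilclass{F}\cap\class{V}_{\textnormal{to}},\dwclass{C})$ and $(\dwclass{F}\cap\class{W}_{\textnormal{ctr}},\dwclass{C})=(\tilclass{F},\dwclass{C})$ are the left-hand cotorsion pairs of the respective flat model structures, and a cotorsion pair is determined by its right-hand class. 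By Proposition~\ref{prop-quillenfunctor}(2), $D\tensor-$ sends a complex in $\tilclass{F}$ to a contractible complex of injectives, which lies in $\class{W}_{\textnormal{to}}$ (the proof of Theorem~\ref{them-exact Inj model structure}, carried over to Theorem~\ref{them-totally Inj model structure}, places all contractibles in $\class{W}_{\textnormal{to}}$), so $D\tensor i$ is a trivial cofibration. Hence $(D\tensor-,\sheafhom_{qc}(D,-))$ is a Quillen adjunction, and combining Proposition~\ref{prop-rightquillenfunctor}(1) with Murfet--Salarian, the functors $D\tensor-$ and $\sheafhom_{qc}(D,-)$ exchange the bifibrant classes $\toclass{I}$ and ${}_I\tilclass{F}\cap\dwclass{C}$.

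Next I would apply Hovey's criterion. Because $D\tensor F$ is already fibrant in $\mathfrak{M}^{inj}_{\textnormal{to}}$ when $F\in{}_I\tilclass{F}$, and $\sheafhom_{qc}(D,E)$ is already cofibrant in $\mathfrak{M}^{flat}_{\textnormal{F-to}}$ when $E\in\toclass{I}$, the criterion reduces to: (i) the unit $\eta_F\colon F\to\sheafhom_{qc}(D,D\tensor F)$ is a weak equivalence in $\mathfrak{M}^{flat}_{\textnormal{F-to}}$ for every F-totally acyclic complex of flats $F$; and (ii) the counit $\epsilon_E\colon D\tensor\sheafhom_{qc}(D,E)\to E$ is a weak equivalence in $\mathfrak{M}^{inj}_{\textnormal{to}}$ for every totally acyclic complex of injectives $E$. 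Part (ii) follows at once from Proposition~\ref{prop-units-counits-weak-equivalences}(2): $\epsilon_E$ is a chain homotopy equivalence, and such maps are weak equivalences in any abelian model structure on $\chqcox$ whose trivial objects include the contractibles. For part (i), $\eta_F$ is already a weak equivalence in $\mathfrak{M}^{flat}_{\textnormal{ctr}}$ by Proposition~\ref{prop-units-counits-weak-equivalences}(1); to transfer this, note that by Murfet--Salarian and Proposition~\ref{prop-rightquillenfunctor}(1) the target $\sheafhom_{qc}(D,D\tensor F)$ lies in ${}_I\tilclass{F}\cap\dwclass{C}$, hence is bifibrant in both flat model structures, while $F$ is cofibrant in both; since the two model structures share the fibrant class $\dwclass{C}$ and, as just shown, the class ${}_I\tilclass{F}\cap\class{V}_{\textnormal{to}}=\tilclass{F}=\dwclass{F}\cap\class{W}_{\textnormal{ctr}}$ of trivially-cofibrant objects, a special $\dwclass{C}$-preenvelope of $F$ serves as a fibrant replacement in both, and $\eta_F$ is a weak equivalence in either one exactly when the induced map out of that common replacement is a chain homotopy equivalence. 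Passing to homotopy categories and restricting to bifibrant objects then recovers the triangulated equivalence $K_{F\textnormal{-}to}(Flat\text{-}Cot)\simeq K_{to}(Inj)$ with inverse $\sheafhom_{qc}(D,-)$.

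Finally, the affine statements and the restricted diagram should follow formally: the totally acyclic projective model structure $\mathfrak{M}^{proj}_{\textnormal{to}}$ is the projective analogue of $\mathfrak{M}^{inj}_{\textnormal{to}}$, and the same (easier) argument --- using projectives in place of flats, the Iyengar--Krause observation that $D\tensor-$ carries complexes of projectives to complexes of injectives, and the known restriction $K_{to}(Proj)\simeq K_{to}(Inj)$ --- gives the Quillen equivalence $D\tensor_R-\colon\mathfrak{M}^{proj}_{\textnormal{to}}\to\mathfrak{M}^{inj}_{\textnormal{to}}$ with inverse $dw\widetilde{P}\circ\homcomplex_R(D,-)$; and since every functor occurring in the diagram of Proposition~\ref{prop-units-counits-weak-equivalences} --- $D\tensor-$, $\sheafhom_{qc}(D,-)$, the inclusions, and the replacement functors $dw\widetilde{P}$, $dw\widetilde{C}$ --- preserves F-total acyclicity, respectively total acyclicity (the replacement functors because $\tilclass{F}$, resp. the contractibles, sits inside ${}_I\tilclass{F}$ and its projective analogue, both extension-closed), the whole diagram restricts verbatim to the ``totally acyclic'' subcategories. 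I expect the main obstacle to be part~(i): because $\mathfrak{M}^{flat}_{\textnormal{F-to}}$ and $\mathfrak{M}^{flat}_{\textnormal{ctr}}$ a priori have different classes of weak equivalences, the statement about $\eta_F$ cannot simply be imported, and the argument above --- that $F$ and $\sheafhom_{qc}(D,D\tensor F)$ are ``good'' objects (cofibrant, resp. bifibrant) for $\mathfrak{M}^{flat}_{\textnormal{F-to}}$, together with the identification ${}_I\tilclass{F}\cap\class{V}_{\textnormal{to}}=\tilclass{F}$ that makes the two structures share a fibrant replacement on ${}_I\tilclass{F}$ --- is what makes the transfer go through.
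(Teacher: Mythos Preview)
Your proposal is correct and follows the same overall architecture as the paper's proof: verify that $D\tensor-$ is left Quillen by checking (trivial) cofibrations, then apply Hovey's criterion~\cite[Prop.~1.3.13(b)]{hovey-model-categories} to the unit and counit. The one substantive difference is your treatment of condition~(i). The paper simply instructs the reader to \emph{re-run} the factorization arguments of Propositions~\ref{prop-weak-equivvs} and~\ref{prop-units-counits-weak-equivalences} inside the new model structure $\mathfrak{M}^{flat}_{\textnormal{F-to}}$; you instead \emph{transfer} the already-established weak equivalence of $\eta_F$ from $\mathfrak{M}^{flat}_{\textnormal{ctr}}$, by observing that the two flat model structures share the cotorsion pair $(\tilclass{F},\dwclass{C})$ (so ${}_I\tilclass{F}\cap\class{V}_{\textnormal{to}}=\tilclass{F}$) and hence share fibrant replacements on objects of ${}_I\tilclass{F}$, reducing both questions to the same chain-homotopy-equivalence statement between bifibrant objects. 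This transfer is valid and arguably cleaner than redoing the factorization. One minor imprecision: you attribute ``$\sheafhom_{qc}(D,E)$ is F-totally acyclic whenever $E$ is totally acyclic'' directly to~\cite[Prop.~6.1]{murfet-salarian}, whereas the paper \emph{derives} it from the iff direction of that result combined with Proposition~\ref{prop-units-counits-weak-equivalences}(2) (the counit $\epsilon_E$ is a homotopy equivalence, so $D\tensor\sheafhom_{qc}(D,E)$ is totally acyclic); you already invoke both ingredients elsewhere, so this is harmless. Your sketch of the affine case and the restriction of the diagram matches the paper's, which additionally cites~\cite[Lemma~1.7]{jorgensen-tate} to identify total acyclicity with F-total acyclicity for complexes of projectives.
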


\begin{proof}
We are to again show that $D \tensor -$ preserves cofibrations and trivial cofibrations. If $i$ is a cofibration in the F-totally acyclic flat model structure then we have a short exact sequence
$$0 \xrightarrow{} X \xrightarrow{i} Y \xrightarrow{} F \xrightarrow{} 0$$
where $F$ is an F-totally acyclic complex of flat sheaves. Then $i$ is also automatically a cofibration in $\mathfrak{M}^{flat}_{\textnormal{ctr}}$, and by Proposition~\ref{prop-quillenfunctor} we know that applying $D\tensor -$ will produce another short exact sequence. This is all we need to have a cofibration in $\mathfrak{M}^{inj}_{\textnormal{to}}$. In the case that $i$ is a trivial cofibration, then $F \in \tilclass{F}$, and we again already know from Proposition~\ref{prop-quillenfunctor}  that $D\tensor F$ is a contractible complex, hence in $\class{W}_{\textnormal{to}}$, the class of trivial objects in $\mathfrak{M}^{inj}_{\textnormal{to}}$. Therefore $D \tensor -$ is a left Quillen functor.

Now it is proved in~\cite[Prop.~6.1]{murfet-salarian} that a complex of flat sheaves $F$ is F-totally acyclic if and only if $D \tensor F$ is a totally acyclic complex of injectives. Notice then that if $E$ is a totally acyclic complex of injectives, this implies $\sheafhom_{qc}(D,E)$ is an F-totally acyclic complex of flat sheaves. Indeed we already know that $\sheafhom_{qc}(D,E)$ is a complex of flat sheaves, and by Proposition~\ref{prop-units-counits-weak-equivalences}, the counit $$\epsilon_E : D \otimes \sheafhom_{qc}(D,E) \xrightarrow{} E$$ is a chain homotopy equivalence.
Thus $D \otimes \sheafhom_{qc}(D,E)$ would also be a totally acyclic complex of injectives and so the result~\cite[Prop.~6.1]{murfet-salarian} implies that  $\sheafhom_{qc}(D,E)$ is F-totally acyclic.

This makes checking~\cite[1.3.13]{hovey-model-categories}(b) trivial; they reduce to showing that the unit $\eta_F$ is a weak equivalence whenever $F$ is F-totally acyclic and that the counit $\epsilon_E$ is a weak equivalence whenever $E$ is totally acyclic. But by modifying the statements of Proposition~\ref{prop-weak-equivvs} and Proposition~\ref{prop-units-counits-weak-equivalences}, we get the wanted weak equivalences in the same way we did there.

Turning to the affine case, we point out that (by~\cite[Theorem~6.8/6.9 ]{bravo-gillespie-hovey}) there is a cofibrantly generated abelian model structure $\mathfrak{M}^{proj}_{\textnormal{to}} = (\toclass{P}, \class{V}_{\textnormal{to}}, All)$ on $\ch$, whose homotopy category recovers  $K_{to}(Proj)$, the chain homotopy category of all totally acyclic complexes of projectives. Let us first see that the triangular commutative diagram of adjoint equivalences from Proposition~\ref{prop-units-counits-weak-equivalences} restricts to a commutative diagram as we have claimed. Because of the existence of a dualizing complex, we know from~\cite[Lemma~1.7]{jorgensen-tate} that a complex of projectives $P$ is totally acyclic if and only if it is F-totally acyclic. So the inclusion $I : K(Proj) \xrightarrow{} \class{D}(Flat)$ restricts to another inclusion $I : K_{to}(Proj) \xrightarrow{} \class{D}_{F\textnormal{-}to}(Flat)$. As for the inverse functor $dw\widetilde{P}$, let $F \in \class{D}_{F\textnormal{-}to}(Flat)$. To compute $dw\widetilde{P}(F)$ we write a short exact sequence $$0 \xrightarrow{} W \xrightarrow{}  dw\widetilde{P}(F) \xrightarrow{} F  \xrightarrow{} 0$$
expressing $dw\widetilde{P}(F)$ as a special $\dwclass{P}$-precover of $F$. Then note $W \in \rightperp{\dwclass{P}} \cap\dwclass{F}$, which by Neeman's result from~\cite{neeman-flat} implies $W$ is a pure acyclic complex of flat modules. So $W$ and $F$ are each F-totally acyclic, which means $dw\widetilde{P}(F)$ must also be F-totally acyclic. But being a complex of projectives,  it again  means $dw\widetilde{P}(F)$ is a totally acyclic complex of projectives. Similar arguments show that the functor $dw\widetilde{C}$ also preserves F-total acyclicity.

Finally, again using~\cite[1.3.13]{hovey-model-categories}(b)we verify: (i) For all totally acyclic complexes of projectives $P$, the unit $P \xrightarrow{\eta_P} \homcomplex_R(D, D \otimes P)$ is a weak equivalence in $\mathfrak{M}^{proj}_{\textnormal{to}}$. But this is true by making minor modifications to Propositions~\ref{prop-units-counits-weak-equivalences}(3) and~\ref{prop-weak-equivvs}(3). Next, we must verify: (ii) For all totally acyclic complexes of injective $R$-modules $E$, the map
$$D \otimes dw\widetilde{P}\big(\homcomplex_R(D,E)\big)\xrightarrow{D \tensor q_{\homcomplex_R(D,E)}}  D \otimes \homcomplex_R(D,E) \xrightarrow{\epsilon_E} E$$ is a weak equivalence in $\mathfrak{M}^{inj}_{\textnormal{to}}$. But this map is the counit of the composed equivalences along the base of the (newly modified) commutative triangle from Proposition~\ref{prop-units-counits-weak-equivalences}. So it is a homotopy equivalence and therefore a weak equivalence in $\mathfrak{M}^{inj}_{\textnormal{to}}$.

\end{proof}% $K_{F\textnormal{-}to}(Flat\text{-}Cot)$          dw\widetilde{P}

 Now putting together everything we have proved in this paper we may conclude that we have proved the following theorem.

\begin{theorem}\label{them-G-flat-G-inj}
Let $\mathbb{X}$ be a semiseparated Noetherian scheme  with dualizing complex $D$. Then $$\mathfrak{M}^{flat}_{\textnormal{G}} = (\class{GF}, \class{V}, \class{C}),$$
the Gorenstein flat model structure on $\qcox$, is Quillen equivalent to
$$\mathfrak{M}^{inj}_{\textnormal{G}} = (All, \class{W}, \class{GI}),$$
the Gorenstein injective model structure on $\qcox$, via a zig-zag of Quillen equivalences as shown:
$$ \begin{tikzcd}
\mathfrak{M}^{flat}_{\textnormal{F-to}}  \arrow[r, "D\tensor -", yshift=0.7ex] \arrow[from=r, "\sheafhom_{qc}", yshift=-0.7ex] \arrow[from=d, "S^0" ', xshift=0.7ex] \arrow[d, "G" ', xshift=-0.7ex]
& \mathfrak{M}^{inj}_{\textnormal{to}}  \arrow[d, "Z_0", xshift=0.7ex] \arrow[from=d, "S^0", xshift=-0.7ex] \\
\mathfrak{M}^{flat}_{\textnormal{G}}
&\mathfrak{M}^{inj}_{\textnormal{G}}  \end{tikzcd}$$
Here, $(D\tensor -, \sheafhom_{qc}(D,-))$ is the Quillen equivalence of Theorem~\ref{them-Murfet-Salarian}, $(G,S^0)$ is the Quillen equivalence of Theorem~\ref{them-G-Quillen-equiv}, and $(S^0,Z_0)$ is the Quillen equivalence of Theorem~\ref{them-G-inj}.
\end{theorem}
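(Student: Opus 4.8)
The plan is essentially bookkeeping. The theorem asserts nothing beyond the composability of three Quillen equivalences already established in the paper, so the proof amounts to checking that these assemble into the displayed square and then invoking the definition of \emph{Quillen equivalent} for a zig-zag, as in~\cite[Section~2]{dugger-shipley}.

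First I would recall the three ingredients, together with the directions of their left Quillen functors. Theorem~\ref{them-Murfet-Salarian} provides a left Quillen equivalence $D \tensor - : \mathfrak{M}^{flat}_{\textnormal{F-to}} \xrightarrow{} \mathfrak{M}^{inj}_{\textnormal{to}}$ with right adjoint $\sheafhom_{qc}(D,-)$; this is the one step that genuinely uses the dualizing complex $D$. Theorem~\ref{them-G-Quillen-equiv} provides a left Quillen equivalence $G : \mathfrak{M}^{flat}_{\textnormal{F-to}} \xrightarrow{} \mathfrak{M}^{flat}_{\textnormal{G}}$, with $G(X) = X_0/B_0X$ and right adjoint $S^0$. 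Theorem~\ref{them-G-inj} provides a left Quillen equivalence $S^0 : \mathfrak{M}^{inj}_{\textnormal{G}} \xrightarrow{} \mathfrak{M}^{inj}_{\textnormal{to}}$ with right adjoint the cycle functor $Z_0$. Placing $\mathfrak{M}^{flat}_{\textnormal{F-to}}$ at the top-left corner of the square in the statement, these are precisely its three edges: the top edge $D\tensor -$ points right, the left edge $G$ points down, and the right edge $S^0$ points up. Reading them off in order gives the chain of Quillen equivalences
\[
\mathfrak{M}^{flat}_{\textnormal{G}} \ \xleftarrow{\ G\ }\ \mathfrak{M}^{flat}_{\textnormal{F-to}} \ \xrightarrow{\ D\tensor -\ }\ \mathfrak{M}^{inj}_{\textnormal{to}} \ \xleftarrow{\ S^0\ }\ \mathfrak{M}^{inj}_{\textnormal{G}},
\]
which is exactly a zig-zag connecting the Gorenstein flat model structure on $\qcox$ to the Gorenstein injective model structure on $\qcox$.

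Second, since being Quillen equivalent is by definition the existence of such a finite zig-zag, nothing further is required. I would, however, record two remarks. First, the two outer model structures live on $\qcox$ while the two inner ones live on $\chqcox$, so the zig-zag genuinely routes through chain complexes. Second, passing to homotopy categories and composing the triangulated equivalences of Theorems~\ref{them-G-flat}, \ref{them-F-flat}, \ref{them-totally Inj model structure}, \ref{them-G-inj} and~\ref{them-Murfet-Salarian}, one obtains a triangulated equivalence $\textnormal{St}(G\textnormal{-}Flat\textnormal{-}Cot) \simeq \textnormal{St}(G\textnormal{-}Inj)$, which extends~\cite{zheng-huang} to the non-affine setting.

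There is no real obstacle here; the only point requiring care is the orientation convention inside the square --- which functor in each adjoint pair is the left Quillen functor --- so that the total derived functors compose in a well-defined way. Once the nodes are placed as above this is automatic, and all the mathematical substance of the theorem is carried by the results already proved.
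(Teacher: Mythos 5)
Your proposal is correct and matches the paper's own (very brief) treatment: the theorem is stated precisely as the concatenation of Theorems~\ref{them-Murfet-Salarian}, \ref{them-G-Quillen-equiv}, and~\ref{them-G-inj}, and you have correctly identified the left Quillen functor in each adjoint pair so that the zig-zag $\mathfrak{M}^{flat}_{\textnormal{G}} \xleftarrow{G} \mathfrak{M}^{flat}_{\textnormal{F-to}} \xrightarrow{D\tensor -} \mathfrak{M}^{inj}_{\textnormal{to}} \xleftarrow{S^0} \mathfrak{M}^{inj}_{\textnormal{G}}$ is assembled with the right orientations.
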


\section{Injective and projective Tate cohomology}

We learned in~\cite{krause-stable derived cat of a Noetherian scheme} of the (injective) Tate cohomology theory that can be associated to any noetherian Grothendieck category with compactly generated derived category. This applies in particular to $\qcox$ for a semiseparated Noetherian scheme $\mathbb{X}$. Given two sheaves $A$ and $B$, and any integer $n\in\Z$, we get Tate cohomology groups which are denoted by $\widehat{\Ext}^n_{\mathbb{X}}(A,B)$. Here we will call these the \emph{injective Tate cohomology groups} and denote them by ${}_\textnormal{i}\widehat{\Ext}^n_{\mathbb{X}}(A,B)$ to distinguish them from the \emph{projective  Tate cohomology groups} which we will define below and denote by ${}_\textnormal{p}\widehat{\Ext}^n_{\mathbb{X}}(A,B)$. We will show that our projective Tate cohomology is correct in the affine case --- when $R$ is a commutative Noetherian ring with a dualizing complex it agrees with~\cite{jorgensen-tate}. Asadollahi and Salarian have a very similar definition in~\cite{asad-salarian}. As they noticed, their definition works best when the sheaf on the right side is cotorsion. Our definition always introduces a cotorsion approximation on the right hand side. This is in line with what we keeping seeing; it is the Gorenstein flat-cotorsion sheaves which are a good non-affine replacement of Gorenstein projective modules.

\subsection{Injective Tate cohomology} In~\cite{gillespie-canonical resolutions} we find what essentially amounts to a general theory of Tate cohomology that can be attached to any hereditary abelian (or exact) model structure $\mathfrak{M} =(\class{Q},\class{W},\class{R})$. The so-called \emph{canonical resolutions and coresolutions} arising from $\mathfrak{M}$ lead to the construction of abelian groups denoted $\Ext^n_{\textnormal{Ho}(\mathfrak{M})}(A,B)$, which are generalized Tate cohomology groups. For instance we have the following.

\begin{proposition}\label{prop-Tate-G}
Assume $\mathbb{X}$ is a semiseparated Noetherian scheme and let
$$\mathfrak{M}^{inj}_{\textnormal{G}} = (All, \class{W}, \class{GI})$$ be the Gorenstein injective model structure on $\qcox$.
Then a canonical coresolution of a sheaf yields a complete injective resolution of that sheaf. Therefore, for all integers $n$, $$\Ext^n_{\textnormal{Ho}(\mathfrak{M}^{inj}_{\textnormal{G}})}(A,B) \cong {}_\textnormal{i}\widehat{\Ext}^n_{\mathbb{X}}(A,B),$$
and we also have ${}_\textnormal{i}\widehat{\Ext}^n_{\mathbb{X}}(A,B) \cong H^n[\Hom_{\mathbb{X}}(W,RB)]$ where $RB$ is a Gorenstein injective approximation (special pre-envelope) of the sheaf $B$ and $W$ may be any full trivial resolution of the sheaf $A$.
\end{proposition}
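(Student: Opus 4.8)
The plan is to apply the general Tate-cohomology theory for hereditary abelian model categories of~\cite{gillespie-canonical resolutions} to the Gorenstein injective model structure $\mathfrak{M}^{inj}_{\textnormal{G}}$ of Theorem~\ref{them-G-inj}, and then to reconcile its output with Krause's construction in~\cite{krause-stable derived cat of a Noetherian scheme}. The explicit formula is the formal part: since $\mathfrak{M}^{inj}_{\textnormal{G}} = (All, \class{W}, \class{GI})$ is hereditary abelian, the canonical (co)resolutions of~\cite{gillespie-canonical resolutions} exist, and the general computational result there gives $\Ext^n_{\textnormal{Ho}(\mathfrak{M}^{inj}_{\textnormal{G}})}(A,B) \cong H^n[\Hom_{\mathbb{X}}(W,RB)]$, where $RB$ is a fibrant replacement of $B$ and $W$ a full trivial resolution of $A$. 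In $\mathfrak{M}^{inj}_{\textnormal{G}}$ a fibrant replacement of $B$ is precisely a special $\class{GI}$-preenvelope $0 \to B \to RB \to V \to 0$ with $RB \in \class{GI}$ and $V \in \class{W}$, i.e.\ a Gorenstein injective approximation, and a full trivial resolution of $A$ is an exact complex $W$ with all entries in the trivial class $\class{W}$. Independence of both choices is part of the general formalism: any two differ only by complexes built from trivially (co)fibrant --- here, injective --- pieces, which $\Hom_{\mathbb{X}}(-,RB)$ and $\Hom_{\mathbb{X}}(W,-)$ render exact. So this part of the statement is immediate once~\cite{gillespie-canonical resolutions} is in place.

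The substance is the identification $\Ext^n_{\textnormal{Ho}(\mathfrak{M}^{inj}_{\textnormal{G}})}(A,B) \cong {}_\textnormal{i}\widehat{\Ext}^n_{\mathbb{X}}(A,B)$, which I would route through the Quillen equivalence $(S^0, Z_0)$ of Theorem~\ref{them-G-inj} and the claim about canonical coresolutions. Since every sheaf is cofibrant in $\mathfrak{M}^{inj}_{\textnormal{G}}$, the canonical coresolution of a sheaf $M$ is obtained by iterating fibrant replacement --- a special $\class{GI}$-preenvelope $0 \to M \to G^0 \to V^0 \to 0$, then one of $V^0$, and so on --- splicing to a coresolution $0 \to M \to G^0 \to G^1 \to \cdots$ by Gorenstein injectives with syzygies in $\class{W}$. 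Each $G^i$ is, by definition, a cycle of a totally acyclic complex of injectives; using $\class{W} \cap \class{GI} = \textnormal{Inj}$ together with the facts about injective sheaves over a Noetherian scheme recalled in Section~\ref{sec-totally-acyclic-inj} (closure under direct sums, stalk-local checking of injectivity, the indecomposable decomposition), one reassembles this data into a single acyclic complex $\widehat{I}_M$ of genuine injective sheaves, equipped with the comparison to an ordinary injective coresolution of $M$, and checks that $\widehat{I}_M$ remains exact after $\Hom_{\mathbb{X}}(I,-)$ for every injective $I$ --- i.e.\ that $\widehat{I}_M$ is a complete injective resolution of $M$ in the sense of~\cite{krause-stable derived cat of a Noetherian scheme}. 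Granting this, the Quillen equivalence identifies $\textnormal{Ho}(\mathfrak{M}^{inj}_{\textnormal{G}})$ with $\textnormal{Ho}(\mathfrak{M}^{inj}_{\textnormal{to}}) \cong K_{to}(Inj)$, carrying $A$ and $B$ to $\widehat{I}_A$ and $\widehat{I}_B$, so that $\Ext^n_{\textnormal{Ho}(\mathfrak{M}^{inj}_{\textnormal{G}})}(A,B)$ becomes a $K_{to}(Inj)$-mapping group between complete injective resolutions; inside $S(Inj)$ --- which by Theorem~\ref{them-exact Inj model structure} and Corollary~\ref{cor-recollement of krause} is set up compatibly with Krause's injective stable derived category --- this is exactly Krause's ${}_\textnormal{i}\widehat{\Ext}^n_{\mathbb{X}}(A,B)$.

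The hard part is the verification just described: that the canonical coresolution really yields a \emph{complete} injective resolution, i.e.\ that it can be realized by a complex of honest injective sheaves \emph{and} that this complex stays acyclic under $\Hom_{\mathbb{X}}(I,-)$. For a general Grothendieck category neither holds, so the Noetherian hypothesis and the structural results of Section~\ref{sec-totally-acyclic-inj} are genuinely needed; the delicate point is the bookkeeping of the splicing, so that the resulting two-sided complex is simultaneously a complex of injectives, agrees with an ordinary injective resolution in all sufficiently large cohomological degrees, and is totally acyclic. Everything after that is formal transport along the Quillen equivalence of Theorem~\ref{them-G-inj} and an appeal to~\cite{gillespie-canonical resolutions}.
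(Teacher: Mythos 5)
Your overall strategy is sound, but you take a substantially longer and more indirect route than the paper, and along the way you treat as ``hard'' something that the formal machinery of~\cite{gillespie-canonical resolutions} hands you for free.

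The paper's proof never leaves $\qcox$ and never invokes the Quillen equivalence $(S^0,Z_0)$ of Theorem~\ref{them-G-inj} or the category $K_{to}(Inj)$. Instead it observes that the dual of~\cite[Thms.~9.3/9.4]{gillespie-canonical resolutions} says, purely formally for an injective model structure with core $\class{W}\cap\class{GI}=\textnormal{Inj}$, that a canonical coresolution of $B$ \emph{is} a totally acyclic complex of injectives $W^{RB}$ with $RB=Z^0(W^{RB})$, and hence $B\hookrightarrow W^{RB}$ is verbatim a complete injective resolution. Then Krause's Definition~7.5 and~\cite[Def.~6.4, Thm.~6.5(3)]{gillespie-canonical resolutions} both compute their respective invariants as $H^n[\Hom_{\mathbb{X}}(A,W^{RB})]$, so the isomorphism $\Ext^n_{\textnormal{Ho}(\mathfrak{M}^{inj}_{\textnormal{G}})}(A,B)\cong {}_\textnormal{i}\widehat{\Ext}^n_{\mathbb{X}}(A,B)$ is the literal coincidence of the two formulas; the last clause comes from~\cite[Cor.~9.6]{gillespie-canonical resolutions}. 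Your route instead pushes everything through the Quillen equivalence to identify things with mapping groups in $K_{to}(Inj)$ and then compares those with Krause's groups. That can be made to work, but you then need a separate identification of Krause's $\widehat{\Ext}$ with $\Hom$-groups between complete injective resolutions, i.e.\ essentially the same comparison the paper does directly, so the detour buys nothing.

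The more worrying point is your paragraph on ``the hard part.'' You claim that getting a genuine, totally acyclic complex of injective sheaves out of the canonical coresolution requires the Noetherian-specific structural facts (stalk-local injectivity, indecomposable decomposition, closure under direct sums) from Section~\ref{sec-totally-acyclic-inj}, and that ``for a general Grothendieck category neither holds.'' This is not how the argument goes: once you have the hereditary abelian model structure $(All,\class{W},\class{GI})$, total acyclicity of the canonical coresolution is a formal consequence of two facts that hold in complete generality --- $\class{W}\cap\class{GI}$ equals the injectives (every injective cotorsion pair), and $\Ext^1(I,G)=0$ for $I$ injective and $G\in\class{GI}$ (directly from the definition of Gorenstein injective). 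The Noetherian hypothesis is needed to \emph{produce} the model structure in Theorem~\ref{them-G-inj}, not to run the coresolution machinery afterwards; that machinery is exactly what~\cite{gillespie-canonical resolutions} packages up. Your description of ``reassembling'' a coresolution by Gorenstein injectives into a complex of honest injectives is also vaguer than it should be: the canonical coresolution already consists of injectives, with the Gorenstein injectives appearing only as its cycles, so there is no reassembly step.
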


We note that a \textbf{full trivial resolution} of an object $A$, in the sense of~\cite[Def.~6.1]{gillespie-canonical resolutions}, is any exact complex $W$ with $A = Z_0W$ and each $W_n \in \class{W}$.

\begin{proof}
Let $B \hookrightarrow RB$ be a Gorenstein injective approximation (special pre-envelope) of the sheaf $B$. A canonical coresolution of $B$, in the sense of~\cite[Def.~1.1]{gillespie-canonical resolutions}, yields precisely a totally acyclic complex of injectives $W^{RB}$ with $RB =Z^0(W^{RB})$. Then the obvious inclusion $B = S^0(B) \hookrightarrow W^{RB}$ is what is often called a \emph{complete injective resolution} of $B$. (See also, \cite[Dual of Them.~9.3/9.4]{gillespie-canonical resolutions}.) Referring to~\cite[Def.~7.5]{krause-stable derived cat of a Noetherian scheme} we have
$$\widehat{\Ext}^n_{\mathbb{X}}(A,B) := H^n[\Hom_{\mathbb{X}}(A,W^{RB})].$$
On the other hand, by~\cite{gillespie-canonical resolutions} (Definition~6.4 and Theorem~6.5(3)) we have a natural isomorphism
$$\Ext^n_{\textnormal{Ho}(\mathfrak{M}^{inj}_{\textnormal{G}})}(A,B) \cong H^n[\Hom_{\mathbb{X}}(A,W^{RB})].$$
Moreover, the class $\class{C}$ throughout~\cite[Section~9]{gillespie-canonical resolutions} coincides with the class of all full trivial resolutions. So now the last statement follows from~\cite[Corollary~9.6]{gillespie-canonical resolutions}.
\end{proof}

\begin{remark}
Note that other general statements from~\cite{gillespie-canonical resolutions}, applied to $\mathfrak{M}^{inj}_{\textnormal{G}}$, correspond to known statements about (injective) Tate cohomology. For example, \cite[Corollary~6.6]{gillespie-canonical resolutions} corresponds to~\cite[Prop.~7.7]{krause-stable derived cat of a Noetherian scheme}, while~\cite[Corollary~6.7]{gillespie-canonical resolutions} recovers the long exact Tate cohomology sequence of~\cite[Prop.~7.9]{krause-stable derived cat of a Noetherian scheme}. Also, \cite[Corollary~7.4]{gillespie-canonical resolutions} corresponds to~\cite[Prop.~7.10]{krause-stable derived cat of a Noetherian scheme}.
\end{remark}

\subsection{Projective Tate cohomology}
So now let $\mathbb{X}$ be a semiseparated Noetherian scheme with a dualizing complex $D$, and we will define the \emph{projective Tate cohomology} for all integers $n$ by
$$ {}_\textnormal{p}\widehat{\Ext}^n_{\mathbb{X}}(A,B) := \Ext^n_{\textnormal{Ho}(\mathfrak{M}^{flat}_{\textnormal{G}})}(A,B).$$
Recall that $\mathfrak{M}^{flat}_{\textnormal{G}}$ denotes the Gorenstein flat model structure $\mathfrak{M}^{flat}_{\textnormal{G}} = (\class{GF}, \class{V}, \class{C})$, so that $\class{GF}$ is the class of Gorenstein flat sheaves and $\class{C}$ is the class of cotorsion sheaves.

By a \emph{complete flat resolution} of a sheaf $A$, we mean a short exact sequence
$$0 \xrightarrow{} V \xrightarrow{} F \xrightarrow{} S^0(A) \xrightarrow{} 0$$ where $F \in {}_I\tilclass{F}$ is an $F$-totally acyclic complex of flat sheaves and $V \in  \rightperp{{}_I\tilclass{F}}$.

\begin{proposition}\label{theorem-projective-Tate}
Assume $\mathbb{X}$ is a semiseparated Noetherian scheme with a dualizing complex $D$. The following hold.
\begin{enumerate}
\item With respect to $\mathfrak{M}^{flat}_{\textnormal{G}}$, a canonical resolution of a sheaf $A$ yields a complete flat resolution  $W_{QA} \twoheadrightarrow S^0(A)$.

\item  ${}_\textnormal{p}\widehat{\Ext}^n_{\mathbb{X}}(A,B) \cong H^n[\Hom_{\mathbb{X}}(F,RB)]$ where $RB$ is a cotorsion approximation (special pre-envelope) of the sheaf $B$ and $F \twoheadrightarrow S^0(A)$ is any complete flat resolution of the sheaf $A$.

\item  ${}_\textnormal{p}\widehat{\Ext}^n_{\mathbb{X}}(A,B) \cong H^n[\Hom_{\mathbb{X}}(QA,W^{RB})]$ where $QA$ is a Gorenstein flat approximation (special precover) of the sheaf $A$ and $W^{RB}$ is any canonical coresolution of the sheaf $B$.

\item  Given any short exact sequence $0 \xrightarrow{} A \xrightarrow{} B \xrightarrow{} C \xrightarrow{} 0$ in $\qcox$ and a sheaf $X$ we have a  long exact sequence of abelian group  $$\cdots \xrightarrow{} {}_\textnormal{p}\widehat{\Ext}^{n-1}_{\mathbb{X}}(X,C) \xrightarrow{} {}_\textnormal{p}\widehat{\Ext}^n_{\mathbb{X}}(X,A) \xrightarrow{} {}_\textnormal{p}\widehat{\Ext}^n_{\mathbb{X}}(X,B)$$
$$ \xrightarrow{} {}_\textnormal{p}\widehat{\Ext}^n_{\mathbb{X}}(X,C) \xrightarrow{} {}_\textnormal{p}\widehat{\Ext}^{n+1}_{\mathbb{X}}(X,A) \xrightarrow{} \cdots $$ and similar for the contravariant ${}_\textnormal{p}\widehat{\Ext}^n_{\mathbb{X}}(-,X)$.

\item ${}_\textnormal{p}\widehat{\Ext}^n_{\mathbb{X}}(A,B)$ identifies with the $\Hom$ groups:
$$\textnormal{Ho}(\mathfrak{M}^{flat}_{\textnormal{G}})(\Omega^n A, B) \cong {}_\textnormal{p}\widehat{\Ext}^n_{\mathbb{X}}(A,B) \cong
 \textnormal{Ho}(\mathfrak{M}^{flat}_{\textnormal{G}})(A, \Sigma^n B).$$

 \item For positive integers $n$, ${}_\textnormal{p}\widehat{\Ext}^n_{\mathbb{X}}(A,B)$ identifies with the $\Ext$ group:
$$ {}_\textnormal{p}\widehat{\Ext}^n_{\mathbb{X}}(A,B) \cong \Ext^n_{\mathbb{X}}(QA,RB),$$
where $QA$ is a Gorenstein flat approximation (special precover) of $A$ and $RB$ is a cotorsion approximation (special pre-envelope) of $B$.

\item There are dimension shifting formulas:
$${}_\textnormal{p}\widehat{\Ext}^{n+m}_{\mathbb{X}}(\Sigma^m A,B) \cong
{}_\textnormal{p}\widehat{\Ext}^n_{\mathbb{X}}(A,B) \cong   {}_\textnormal{p}\widehat{\Ext}^{n+m}_{\mathbb{X}}(A, \Omega^m B).$$
%
%\item For each integer $n$, we have isomorphisms:
%$${}_\textnormal{p}\widehat{\Ext}^n_{\mathbb{X}}(A,B) \cong {}_\textnormal{i}\widehat{\Ext}^n_{\mathbb{X}}(Z_0[D\tensor W_{CA}],Z_0[D\tensor W_{CB}])$$
%where $W_{CA}$ and $W_{CB}$ are canonical resolutions of cotorsion approximations and as such are totally acyclic complexes of flat-cotorsion modules, and,
%$${}_\textnormal{i}\widehat{\Ext}^n_{\mathbb{X}}(A,B) \cong {}_\textnormal{p}\widehat{\Ext}^n_{\mathbb{X}}(G[\sheafhom_{qc}(D,RA)],G[\sheafhom_{qc}(D,RB)])$$
%where $RA$ and $RB$ are complete injective resolutions, and $G$ is the functor $X \mapsto X_0/B_0X$.
\end{enumerate}
Moreover, in the affine case of a commutative Noetherian ring $R$ admitting a dualizing complex $D$, then ${}_\textnormal{p}\widehat{\Ext}^n_{\mathbb{X}}(A,B)$ agrees with the usual projective Tate cohomology defined via complete projective resolution of $A$.
\end{proposition}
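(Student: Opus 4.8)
The plan is to obtain the whole statement by specializing to $\mathfrak{M}^{flat}_{\textnormal{G}} = (\class{GF},\class{V},\class{C})$ the general theory of generalized Tate cohomology for hereditary abelian model structures of~\cite{gillespie-canonical resolutions}, in exact parallel with the way Proposition~\ref{prop-Tate-G} treats $\mathfrak{M}^{inj}_{\textnormal{G}}$, and then to settle the affine comparison separately. First I would record the translation of the abstract data into the concrete players: $\mathfrak{M}^{flat}_{\textnormal{G}}$ is hereditary (both its associated cotorsion pairs $(\class{GF},\rightperp{\class{GF}})$ and $(\class{F},\class{C})$ are hereditary); its cofibrant objects are the Gorenstein flats, so a cofibrant replacement $QA$ is exactly a Gorenstein flat approximation (special precover); its fibrant objects are the cotorsion sheaves, so a fibrant replacement $RB$ is exactly a cotorsion approximation (special pre-envelope); and, by~\cite{cet-G-flat-stable-scheme}, its \emph{trivially} cofibrant objects $\class{GF}\cap\class{V}$ are precisely the flat sheaves. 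Since $\textnormal{Ho}(\mathfrak{M}^{flat}_{\textnormal{G}})$ is a triangulated category, parts~(5) and~(7) are then formal: (5) is the identification $\Ext^n_{\textnormal{Ho}(\mathfrak{M})}(A,B) = \textnormal{Ho}(\mathfrak{M})(\Omega^nA,B)\cong\textnormal{Ho}(\mathfrak{M})(A,\Sigma^nB)$ built into~\cite[Thm.~6.5]{gillespie-canonical resolutions}, and (7) follows from (5) because $\Sigma$ is an auto-equivalence; part~(4) is the transcription of~\cite[Cor.~6.7]{gillespie-canonical resolutions} and its dual, just as the remark after Proposition~\ref{prop-Tate-G} indicates for the injective case.

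The one genuinely substantive point is~(1): that the canonical resolution of a sheaf $A$ relative to $\mathfrak{M}^{flat}_{\textnormal{G}}$ is a complete flat resolution $W_{QA}\twoheadrightarrow S^0(A)$. Because the trivially cofibrant objects of $\mathfrak{M}^{flat}_{\textnormal{G}}$ are the flat sheaves and its cofibrant objects are the Gorenstein flats, the canonical resolution of~\cite[Def.~1.1, Thm.~9.3/9.4]{gillespie-canonical resolutions} is an exact complex of flat sheaves all of whose cycles are cofibrant, i.e.\ Gorenstein flat, which (as $\mathbb{X}$ is Noetherian) is exactly an $F$-totally acyclic complex of flats, an object of ${}_I\tilclass{F}$, equipped with its canonical surjection onto $S^0(A)$. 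To see that the kernel of this surjection lies in $\rightperp{{}_I\tilclass{F}}$ --- and hence that one really has a complete flat resolution in the sense defined in this section --- I would go through the Quillen equivalence $(G,S^0)\colon\mathfrak{M}^{flat}_{\textnormal{F-to}}\rightleftarrows\mathfrak{M}^{flat}_{\textnormal{G}}$ of Theorem~\ref{them-G-Quillen-equiv}: as observed in that theorem's proof, a cofibrant replacement of $S^0(A)$ in $\mathfrak{M}^{flat}_{\textnormal{F-to}}$ is precisely a short exact sequence $0\to V\to F\to S^0(A)\to 0$ with $F\in{}_I\tilclass{F}$ and $V\in\rightperp{{}_I\tilclass{F}}$, that is, a complete flat resolution, and such a sequence visibly has the characterizing properties of a canonical resolution for $\mathfrak{M}^{flat}_{\textnormal{G}}$, so the two agree up to homotopy equivalence. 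Reconciling Gillespie's two-sided canonical-resolution construction on $\qcox$ with the $F$-totally acyclic flat model structure on $\chqcox$ is the step I expect to require the most care; everything else is bookkeeping.

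Granting~(1), parts~(2) and~(3) are the two standard computations of $\Ext^n_{\textnormal{Ho}(\mathfrak{M}^{flat}_{\textnormal{G}})}(A,B)$ supplied by~\cite{gillespie-canonical resolutions}: (2) applies $H^n\Hom_{\mathbb{X}}(-,RB)$ to the canonical resolution of $A$, i.e.\ to a complete flat resolution $F\twoheadrightarrow S^0(A)$ (this is~\cite[Def.~6.4, Thm.~6.5(3)]{gillespie-canonical resolutions}), while~(3) applies $H^n\Hom_{\mathbb{X}}(QA,-)$ to a canonical coresolution $W^{RB}$ of $B$ (the flat-model analogue of the final assertion of Proposition~\ref{prop-Tate-G}, via~\cite[Cor.~9.6]{gillespie-canonical resolutions}), the one change being that $A$ must be replaced by $QA$ since, unlike in $\mathfrak{M}^{inj}_{\textnormal{G}}$, not every sheaf is cofibrant in $\mathfrak{M}^{flat}_{\textnormal{G}}$. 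For~(6), I would take the complete flat resolution of $A$ to have flat-cotorsion entries; then, since $B$ has been replaced by the cotorsion sheaf $RB$, one uses Theorem~\ref{them-cotorsion-complexes-scheme} to see that the cycles of that resolution are cotorsion, and, since $(\class{F},\class{C})$ is hereditary so that $\Ext^{\geq 1}_{\mathbb{X}}(\textnormal{flat},RB)=0$, the usual dimension-shift along those syzygies identifies $H^n\Hom_{\mathbb{X}}(F,RB)$ with $\Ext^n_{\mathbb{X}}(QA,RB)$ for $n\geq 1$ --- the same computation that passes from classical Tate cohomology to ordinary $\Ext$ in positive degrees.

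Finally, for the ``Moreover'' clause, let $R$ be a commutative Noetherian ring with dualizing complex $D$. By~(2), ${}_\textnormal{p}\widehat{\Ext}^n_R(A,B)\cong H^n[\Hom_R(F,RB)]$ for a complete flat resolution $F$ of $A$ and a cotorsion approximation $RB$ of $B$. By~\cite{jorgensen-tate}, over such a ring the Tate cohomology computed from complete flat resolutions (with a cotorsion coefficient) coincides with the usual projective Tate cohomology computed from a complete projective resolution of $A$, which gives the claim; alternatively one may invoke the Quillen equivalence of~\cite[Thm.~5.1]{estrada-gillespie-coherent-schemes} between $\mathfrak{M}^{flat}_{\textnormal{G}}$ and the Ding/Gorenstein projective model structure on $R$-Mod, whose own generalized Tate cohomology is, by the argument of Proposition~\ref{prop-Tate-G}, the classical one, and note that $\Ext^n_{\textnormal{Ho}(-)}(A,B)$ is carried across by the induced triangulated equivalence.
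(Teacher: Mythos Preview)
Your overall strategy---specialize the machinery of~\cite{gillespie-canonical resolutions} to $\mathfrak{M}^{flat}_{\textnormal{G}}$---is exactly the paper's, and your citations for (3)--(5), (7) match. The execution diverges at the one substantive point, namely~(1). The paper's key step is to prove that ${}_I\tilclass{F}$ coincides with the class of all acyclic complexes of flat sheaves with Gorenstein flat cycles; once this is known, the Becker model structure $(\class{C},\class{V},\dgclass{R})$ used throughout~\cite[\S9]{gillespie-canonical resolutions} \emph{is} the F-totally acyclic flat model $\mathfrak{M}^{flat}_{\textnormal{F-to}}$ on the nose, so (1) and (2) are verbatim~\cite[Thm.~9.3, Cor.~9.4]{gillespie-canonical resolutions}, with no need to compare canonical resolutions on $\qcox$ to cofibrant replacements on $\chqcox$. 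You assert this same equality (``which (as $\mathbb{X}$ is Noetherian) is exactly an $F$-totally acyclic complex of flats'') but do not justify it; the paper proves it via~\cite[Thm.~4.18(ii)]{murfet-salarian} (F-total acyclicity can be tested against flat-cotorsion sheaves) together with~\cite[Lemma~2.3]{cet-G-flat-stable-scheme} (flat-cotorsion $\Rightarrow$ Gorenstein cotorsion). Your detour through the Quillen equivalence $(G,S^0)$ to check the kernel condition is then unnecessary, and the ``reconciling'' you flag as delicate simply disappears.

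Two smaller points. For~(6) the paper just cites~\cite[Cor.~7.4]{gillespie-canonical resolutions}; your proposed dimension-shift with flat-cotorsion entries would work but is superfluous. For the affine clause, the paper argues that flat modules have finite projective dimension (\cite[Rmk.~1.6]{jorgensen-tate}), whence $\mathfrak{M}^{flat}_{\textnormal{G}}$ and the Gorenstein projective model $\mathfrak{M}^{proj}_{\textnormal{G}}$ share the same class of trivial objects (via~\cite[Prop.~5.2]{estrada-gillespie-coherent-schemes} and~\cite[Thm.~6.7]{bravo-gillespie-hovey}); the two homotopy categories are then literally equal, and a canonical resolution for $\mathfrak{M}^{proj}_{\textnormal{G}}$ is a complete projective resolution in the usual sense. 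This is essentially your second alternative, and cleaner than invoking a Quillen equivalence.
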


\begin{proof}
We claim that Becker's  model structure $(\class{C},\class{V},\dgclass{R})$ that appears throughout \cite[Section~9]{gillespie-canonical resolutions} is none other than $\mathfrak{M}^{flat}_{\textnormal{F-to}} = ({}_I\tilclass{F}, \class{V}_{\textnormal{to}}, \dwclass{C})$, the F-totally acyclic flat model structure of Theorem~\ref{them-F-flat}. To see this, we just need to verify that ${}_I\tilclass{F}$, the class of all F-totally acyclic complexes of flats, coincides with $\class{C}$, the class of all acyclic complexes of flats with Gorenstein flat cycles. Certainly, ${}_I\tilclass{F} \subseteq \class{C}$. For the reverse containment, it is enough by show that any $F \in \class{C}$ satisfies $\Hom_{\mathbb{X}}(F,C)$ is acyclic whenever $C$ is a flat-cotorsion sheaf~\cite[Theorem~4.18(ii)]{murfet-salarian}. But any flat-cotorsion sheaf is Gorenstein cotorsion, by~\cite[Lemma~2.3]{cet-G-flat-stable-scheme}. So such a $\Hom_{\mathbb{X}}(F,C)$ is certainly acyclic because $\Ext^1_{\mathbb{X}}(Z_nF,C) = 0$.

So now statement (1) is a special case of~\cite[Theorem~9.3]{gillespie-canonical resolutions} and statement (2) is a special case of~\cite[Corollary~9.4]{gillespie-canonical resolutions}.

Statements (3) and (4) follow from~\cite[Theorem~6.5]{gillespie-canonical resolutions}. The identification of projective Tate cohomology as $\Hom$ groups is~\cite[Corollary~6.6]{gillespie-canonical resolutions}.
The identification as $\Ext$ groups is~\cite[Corollary~7.4]{gillespie-canonical resolutions}. The dimension shifting formulas~\cite[Corollary~6.8]{gillespie-canonical resolutions}.

For the affine case, we see from~\cite[Remark~1.6]{jorgensen-tate} that all flat modules have finite projective dimension whenever $R$ is a commutative Noetherian ring with a dualizing complex. So by combining~\cite[Prop.~5.2]{estrada-gillespie-coherent-schemes} and~\cite[Theorem~6.7]{bravo-gillespie-hovey}, the Gorenstein flat model structure $\mathfrak{M}^{flat}_{\textnormal{G}}$ shares the exact same class $\class{V}$ of trivial objects as the Gorenstein projective model structure $\mathfrak{M}^{proj}_{\textnormal{G}} = (\class{GP}, \class{V},All)$. So now the point is that projective Tate cohomology exists as morphism sets in the common category $\textnormal{Ho}(\mathfrak{M}^{proj}_{\textnormal{G}}) = \textnormal{Ho}(\mathfrak{M}^{flat}_{\textnormal{G}})$.  A canonical resolution with respect to $\mathfrak{M}^{proj}_{\textnormal{G}}$ yields a complete projective resolution in the usual sense, proving the final claim.
 \end{proof}

\subsection{Gorenstein schemes}
We will show now that the previously introduced Tate cohomology functors agree in the case that $\mathbb{X}$ is a semiseparated Gorenstein scheme of finite Krull dimension, (equivalently, admits a dualizing complex~\cite[Ch.V.10]{hartshorne-residues and duality}). This will be a consequence of the fact that over such schemes the class of trivial objects in the two models (the Gorenstein injective and the Gorenstein flat model structure) coincides.
Throughout this section, we fix a semiseparating open affine covering $\mathcal U=\{U_0,\ldots, U_m\}$ of $\mathbb{X}$.
Firstly, we will summarize some properties on Gorenstein schemes.

We recall that a commutative Noetherian ring is called \emph{Iwanaga-Gorenstein} if it has finite self-injective dimension.

A scheme $\mathbb{X}$ is Gorenstein provided that $\mathcal O_{\mathbb{X},x}$ is a Gorenstein ring for every $x\in \mathbb{X}$. If, in addition, $\mathbb{X}$ has finite Krull dimension $d$, then this is equivalent to saying that $\mathcal{O}_{\mathbb{X}}(U)$ is Iwanaga-Gorenstein (or \emph{$d$-Gorenstein}, following \cite[Definition 9.1.9]{enochs-jenda-book}), for each open affine $U$ of $\mathbb{X}$ or, equivalently, for each $U_i \in \mathcal U$.

Given $M\in \qcox$, we will denote by $\mathrm{id}_{\mathbb{X}}(M)$ and $\mathrm{fd}_{\mathbb{X}}(M)$ the injective dimension and the flat dimension of $M$, respectively. It is then clear that $\mathrm{id}_{\mathbb{X}}(M)<\infty$ if and only if $\mathrm{id}_{\mathcal O_{\mathbb{X}}(U_i)}(M(U_i))<\infty$ and $\mathrm{fd}_{\mathbb{X}}(M)<\infty$ if and only if $\mathrm{fd}_{\mathcal O_{\mathbb{X}}(U_i)}(M(U_i))<\infty$, for each $i\in \{0,\ldots,m\}$.

Now, although $\mathbb{X}$ may not have enough projective objects, we still can define $\mathrm{pd}_{\mathbb{X}}(M)$ in terms of vanishing of the Ext functor, i.e. $\mathrm{pd}_{\mathbb{X}}(M)\leq n$ if and only if $\Ext_{\mathbb{X}}^i(M,-)=0$, for $i\geq n+1$.
\begin{lemma}
Let $\mathbb X$ be a semiseparated Gorenstein scheme of finite Krull dimension $d$. The following are equivalent for a sheaf $M$.
\begin{enumerate}
\item $\mathrm{id}_{\mathbb{X}}(M)<\infty$.
\item $\mathrm{fd}_{\mathbb{X}}(M)<\infty$.
\item $\mathrm{pd}_{\mathbb{X}}(M)<\infty$.
\end{enumerate}
\end{lemma}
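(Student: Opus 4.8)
The plan is to reduce all three conditions to the affine coordinate rings $R_i:=\mathcal O_{\mathbb X}(U_i)$ — each of which is a $d$-Gorenstein ring, as recalled just before the statement — and then to run the cycle $(1)\Rightarrow(2)\Rightarrow(3)\Rightarrow(1)$. The one external input I will use is the classical theorem of Iwanaga (see \cite[Theorem~9.1.10]{enochs-jenda-book}): over an $n$-Gorenstein commutative Noetherian ring $S$, an $S$-module $N$ satisfies $\mathrm{pd}_S(N)<\infty$ if and only if $\mathrm{id}_S(N)<\infty$ if and only if $\mathrm{fd}_S(N)<\infty$, and in that case all three dimensions are at most $n$.

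The equivalence $(1)\Leftrightarrow(2)$ will be immediate: the paper records that $\mathrm{id}_{\mathbb X}(M)<\infty$ if and only if $\mathrm{id}_{R_i}(M(U_i))<\infty$ for all $i$, and likewise for flat dimension, so Iwanaga's theorem applied over each $R_i$ converts one condition into the other. For $(3)\Rightarrow(1)$ I will use that, $\mathbb X$ being semiseparated, each inclusion $j_i\colon U_i\hookrightarrow\mathbb X$ is an affine morphism; hence $(j_i)_*\colon\textnormal{Qco}(U_i)\to\qcox$ is exact and, being right adjoint to the exact restriction functor $(j_i)^*$, preserves injectives. It follows that $\Ext^n_{R_i}(M(U_i),N)\cong\Ext^n_{\mathbb X}(M,(j_i)_*\widetilde N)$ naturally in every $R_i$-module $N$, so $\mathrm{pd}_{\mathbb X}(M)<\infty$ forces $\mathrm{pd}_{R_i}(M(U_i))<\infty$ for each $i$; Iwanaga's theorem then gives $\mathrm{id}_{R_i}(M(U_i))<\infty$, and the local characterization of injective dimension gives $\mathrm{id}_{\mathbb X}(M)<\infty$.

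The real work, and the step I expect to be the main obstacle, is $(2)\Rightarrow(3)$: the projective dimension $\mathrm{pd}_{\mathbb X}$ is defined through vanishing of the \emph{global} functor $\Ext_{\mathbb X}$, yet $\qcox$ need not have projective objects, so there is no ready-made locality statement to import. To globalize I will use the Čech (local-to-global) spectral sequence attached to the semiseparating cover $\mathcal U=\{U_0,\dots,U_m\}$: writing $U_\sigma:=\bigcap_{i\in\sigma}U_i$ for $\emptyset\neq\sigma\subseteq\{0,\dots,m\}$ — which is affine precisely because $\mathbb X$ is semiseparated — there is for every $G\in\qcox$ a convergent first-quadrant spectral sequence
$$E_1^{p,q}=\bigoplus_{|\sigma|=p+1}\Ext^q_{U_\sigma}\!\left(M|_{U_\sigma},\,G|_{U_\sigma}\right)\ \Longrightarrow\ \Ext^{p+q}_{\mathbb X}(M,G),$$
where $\Ext^q_{U_\sigma}$ denotes $\Ext$ in $\textnormal{Qco}(U_\sigma)\simeq\mathcal O_{\mathbb X}(U_\sigma)\textnormal{-Mod}$. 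Here $E_1^{p,q}=0$ once $p>m$, since $\mathcal U$ has only $m+1$ members. Moreover each $U_\sigma$ is an open subscheme of $\mathbb X$, hence itself Gorenstein of Krull dimension at most $d$, so $\mathcal O_{\mathbb X}(U_\sigma)$ is $d$-Gorenstein; and $M|_{U_\sigma}$ has finite flat dimension (restriction along an open immersion sends a finite flat resolution of $M$ to one of $M|_{U_\sigma}$), so Iwanaga's theorem gives $\mathrm{pd}_{\mathcal O_{\mathbb X}(U_\sigma)}\!\big(M(U_\sigma)\big)\le d$, i.e.\ $E_1^{p,q}=0$ once $q>d$. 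The spectral sequence then forces $\Ext^n_{\mathbb X}(M,G)=0$ for every $n>m+d$ and every $G$, that is, $\mathrm{pd}_{\mathbb X}(M)\le m+d<\infty$; this closes the cycle.

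Everything outside of $(2)\Rightarrow(3)$ is bookkeeping with the affine-local characterizations of injective and flat dimension already stated in the paper, together with Iwanaga's theorem. The point that will need genuine care — rather than a routine computation — is setting up and verifying the convergence of the Čech--$\Ext$ spectral sequence over the semiseparated scheme $\mathbb X$; this is precisely where semiseparatedness enters essentially, in guaranteeing that the finite intersections $U_\sigma$ are affine with (even $d$-)Gorenstein coordinate rings so that the local input of Iwanaga's theorem is available on every stratum.
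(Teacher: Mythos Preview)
Your argument is correct, and for $(1)\Leftrightarrow(2)$ and $(3)\Rightarrow(1)$ it coincides with the paper's proof: both reduce to Iwanaga's theorem on the affine pieces via the local characterizations of injective/flat dimension and the adjunction isomorphism $\Ext^n_{\mathcal O_{\mathbb X}(U_i)}(M(U_i),N)\cong\Ext^n_{\mathbb X}(M,(j_i)_*\widetilde N)$ (the paper cites this as \cite[Lemma~6.5]{gillespie-quasi-coherent}, and lands in $(2)$ rather than $(1)$, but the content is identical).

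The one genuine difference is in $(2)\Rightarrow(3)$. The paper simply invokes \cite[Lemma~6.9]{asad-salarian}, which says every flat sheaf has projective dimension $\le m+d$, and leaves the passage from a finite flat resolution of $M$ to $\mathrm{pd}_{\mathbb X}(M)<\infty$ implicit. You instead supply a direct \v Cech--$\Ext$ spectral sequence argument that bounds $\mathrm{pd}_{\mathbb X}(M)$ by $m+d$ in one stroke. Your spectral sequence is valid: the \v Cech resolution $G\to C^\bullet$ is exact on a semiseparated quasi-compact scheme, the hyper-Ext spectral sequence $E_1^{p,q}=\Ext^q_{\mathbb X}(M,C^p)\Rightarrow\Ext^{p+q}_{\mathbb X}(M,G)$ converges since $C^\bullet$ is bounded, and the identification $\Ext^q_{\mathbb X}(M,(j_\sigma)_*G|_{U_\sigma})\cong\Ext^q_{U_\sigma}(M|_{U_\sigma},G|_{U_\sigma})$ holds because $j_\sigma$ is an affine open immersion (so $(j_\sigma)_*$ is exact and preserves injectives). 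What you gain is a self-contained proof with an explicit bound; what the paper gains is brevity by outsourcing precisely this computation to \cite{asad-salarian}, whose Lemma~6.9 is proved by essentially the same spectral sequence.
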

\begin{proof}
The equivalence $(1)\Leftrightarrow (2)$ follows by the comment above and by \cite[Proposition 9.1.7]{enochs-jenda-book}. Now $(2)\Rightarrow (3)$ follows from \cite[Lemma 6.9]{asad-salarian} where the authors show that every flat sheaf has finite projective dimension $\leq m+d$. Finally, let us assume $(3)$; then, by \cite[Lemma 6.5]{gillespie-quasi-coherent}, given an open affine $U_i$ of $\mathcal U$ and the inclusion $j : U_i \subseteq X$, the direct image functor $j_*$ preserves quasi-coherence and there are natural isomorphisms
$$\Ext^n_{\mathcal{O}_{\mathbb{X}}(U_i)}(M(U_i),G) \cong \Ext^n_{\mathbb{X}}(M,j_*G),$$
for all $n\geq 0$. It follows immediately that $\mathrm{pd}_{\mathcal O_{\mathbb{X}}(U_i)}(M(U_i))<\infty$, for each $i\in \{0,\ldots,m\}$. Now, \cite[Proposition 9.1.7]{enochs-jenda-book} tells us that $\mathrm{fd}_{\mathcal O_{\mathbb{X}}(U_i)}(M(U_i))<\infty$, for each $i\in \{0,\ldots,m\}$. Therefore $(2)$ follows.
\end{proof}

\begin{lemma}\label{lemma-Gorenstein scheme}
Assume $\mathbb X$ is a semiseparated Gorenstein scheme of finite Krull dimension $d$. Then:
\begin{enumerate}
\item Every $dth$ injective cosyzygy of a sheaf is Gorenstein injective.
\item Every $dth$ flat syzygy of a sheaf is Gorenstein flat.
\end{enumerate}
\end{lemma}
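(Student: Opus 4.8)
The plan is to deduce both statements by dimension shifting against the two Gorenstein cotorsion pairs we already have: part (1) will use the complete hereditary cotorsion pair $(\class W,\class{GI})$ on $\qcox$ of \cite[Lemmas~3.4/3.5]{estrada-iacob-frontiers-china} underlying $\mathfrak{M}^{inj}_{\textnormal{G}}$, and part (2) the cotorsion pair $(\class{GF},\class V\cap\class C)$ of cofibrant and trivially fibrant objects of $\mathfrak{M}^{flat}_{\textnormal{G}}$ from Theorem~\ref{them-G-flat}. The two arguments are formally dual. In each case the trivial class consists of sheaves of finite flat dimension, which by the preceding lemma coincide with the sheaves of finite injective dimension; and over a $d$-Gorenstein scheme any such sheaf $X$ has $\mathrm{id}_{\mathbb X}(X)\le d$, because injective dimension may be computed stalkwise and a module of finite injective dimension over a Gorenstein local ring has injective dimension at most the dimension of that ring, which here is $\le d$.

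I would treat (2) first, since the dimension shift there is the clean one. Let $N$ be a $d$th flat syzygy of a sheaf $A$, so that there is an exact sequence $0\to N\to F_{d-1}\to\cdots\to F_0\to A\to 0$ with all $F_i$ flat. Since $\mathfrak{M}^{flat}_{\textnormal{G}}=(\class{GF},\class V,\class C)$ is a hereditary abelian model structure, $\class{GF}=\leftperp{(\class V\cap\class C)}$, so it is enough to verify $\Ext^1_{\mathbb X}(N,X)=0$ for every trivially fibrant $X$, that is, every cotorsion sheaf of finite flat dimension. The flat--cotorsion pair on $\qcox$ is hereditary (a kernel of an epimorphism between flat sheaves is flat, a stalkwise assertion), so $\Ext^{i}_{\mathbb X}(F_j,X)=0$ for all $i\ge 1$, and dimension shifting through the resolution yields $\Ext^1_{\mathbb X}(N,X)\cong\Ext^{d+1}_{\mathbb X}(A,X)$. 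This group vanishes because $\mathrm{id}_{\mathbb X}(X)\le d$; hence $N\in\class{GF}$.

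For (1), let $N$ be a $d$th injective cosyzygy of a sheaf $A$, arising from an exact sequence $0\to A\to I^{0}\to\cdots\to I^{d-1}\to N\to 0$ with the $I^k$ injective. Since $(\class W,\class{GI})$ is a complete hereditary cotorsion pair, $\class{GI}=\rightperp{\class W}$, so it suffices to prove $\Ext^1_{\mathbb X}(W,N)=0$ for every $W\in\class W$. Dimension shifting the target through this coresolution (which is always legitimate, as the $I^k$ are injective) gives $\Ext^1_{\mathbb X}(W,N)\cong\Ext^{d+1}_{\mathbb X}(W,A)$, so the statement reduces to the bound $\mathrm{pd}_{\mathbb X}(W)\le d$ for every sheaf $W$ in the trivial class $\class W$.

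This last bound is where the main difficulty lies, since the projective dimension of a sheaf is not a stalkwise invariant. The naive global--to--local route — a flat resolution of $W$ of length $\le d$, the estimate $\mathrm{pd}_{\mathbb X}(F)\le m+d$ for flat sheaves from \cite[Lemma~6.9]{asad-salarian}, and the comparison $\Ext^n_{\mathcal O_{\mathbb X}(U_i)}(M(U_i),G)\cong\Ext^n_{\mathbb X}(M,j_*G)$ of \cite[Lemma~6.5]{gillespie-quasi-coherent} — recovers finiteness but only a bound involving the cardinality of the covering $\mathcal U$. The plan is to sharpen this, exploiting that the objects of $\class W$ have injective dimension $\le d$ (and not merely finite projective dimension); alternatively one can sidestep the cotorsion--pair argument altogether by first establishing an affine--local characterisation of the Gorenstein injective sheaves, and similarly of the Gorenstein flat sheaves for (2), so that both statements reduce to the classical fact that over a $d$-dimensional Iwanaga--Gorenstein ring every module has Gorenstein injective (respectively Gorenstein flat) dimension at most $d$ (cf.\ \cite{enochs-jenda-book}), the nontrivial point then being the gluing direction of that characterisation.
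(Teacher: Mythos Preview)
Your primary dimension-shifting argument is circular. For (2) you assert that the trivially fibrant objects $X\in\class V\cap\class C=\rightperp{\class{GF}}$ are the cotorsion sheaves of finite flat dimension, hence have $\mathrm{id}_{\mathbb X}(X)\le d$; for (1) you ultimately want $\mathrm{pd}_{\mathbb X}(W)\le d$ for every $W\in\leftperp{\class{GI}}$. But neither description of the trivial class is available at this point: that identification is precisely the content of the Proposition immediately \emph{following} this Lemma, and its proof uses the Lemma. Concretely, to show $X\in\rightperp{\class{GF}}$ forces $\mathrm{id}_{\mathbb X}(X)\le d$, one dimension-shifts $\Ext^{d+1}_{\mathbb X}(A,X)\cong\Ext^1_{\mathbb X}(K,X)$ with $K$ a $d$th flat syzygy of $A$ and then needs $K\in\class{GF}$ --- which is exactly statement (2). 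You also acknowledge that your argument for (1) is incomplete even modulo this, since you only sketch a plan to sharpen the bound on $\mathrm{pd}_{\mathbb X}$ from $m+d$ down to $d$.

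The alternative you float in your last sentence --- reduce to the affine case via a local characterisation of Gorenstein injective and Gorenstein flat sheaves, then invoke the classical result for $d$-Gorenstein rings --- is correct and is exactly what the paper does. The local characterisations are already in the literature: \cite[Theorem~3.6]{estrada-iacob-frontiers-china} for Gorenstein injectives and \cite[Theorem~1.6]{cet-G-flat-stable-scheme} for Gorenstein flats, so the ``nontrivial gluing direction'' you worry about is handled there. Since each $\class O_{\mathbb X}(U_i)$ is $d$-Gorenstein, both parts then follow immediately from \cite[Theorem~12.3.1]{enochs-jenda-book}.
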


\begin{proof}
Let us see $(1)$. By \cite[Theorem 3.6]{estrada-iacob-frontiers-china} the notion of Gorenstein injective can be tested locally on each $U_i\in \class{U}$ and, by hypothesis, each ring $\class{O}_{\mathbb{X}}(U_i)$ is $d$-Gorenstein, so then the result follows from \cite[Theorem 12.3.1]{enochs-jenda-book}. The proof of $(2)$ also follows from \cite[Theorem 12.3.1]{enochs-jenda-book} as the notion of Gorenstein flat is also local by \cite[Theorem 1.6]{cet-G-flat-stable-scheme}.
\end{proof}

\begin{proposition}
Let $\mathbb X$ be a semiseparated Gorenstein scheme of finite Krull dimension. Then the Gorenstein flat model $\mathfrak{M}^{flat}_{\textnormal{G}}$ and the Gorenstein injective model $\mathfrak{M}^{inj}_{\textnormal{G}}$ on $\qcox$ have the same class of trivial objects $\mathcal W$ given by the sheaves $M$ with $\mathrm{id}_{\mathbb{X}}(M)<\infty$ (equivalently, $\mathrm{pd}_{\mathbb{X}}(M)<\infty$ or $\mathrm{fd}_{\mathbb{X}}(M)<\infty$).
\end{proposition}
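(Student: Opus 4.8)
The plan is to identify the class of trivial objects of each of the two model structures with $\{M \in \qcox : \mathrm{id}_{\mathbb{X}}(M) < \infty\}$, and then to quote the preceding lemma — which equates finite injective, flat and projective dimension over such $\mathbb{X}$ — to conclude that the two trivial classes coincide. The starting point is Hovey's dictionary between abelian model structures and pairs of complete cotorsion pairs. Since $\mathfrak{M}^{inj}_{\textnormal{G}} = (All, \class{W}, \class{GI})$ has every object cofibrant, $(All, \class{W} \cap \class{GI})$ is a cotorsion pair, so $\class{W} \cap \class{GI} = All^{\perp}$ is exactly the class of injective sheaves; in particular every injective sheaf lies in $\class{W}$. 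Likewise, for $\mathfrak{M}^{flat}_{\textnormal{G}} = (\class{GF}, \class{V}, \class{C})$, the pair $(\class{GF} \cap \class{V}, \class{C})$ is a cotorsion pair, and since $\class{C}$ is the class of all cotorsion sheaves and $(\class{F}, \class{C})$ is the flat cotorsion pair in $\qcox$ (writing $\class{F}$ for the flat sheaves), we get $\class{GF} \cap \class{V} = \leftperp{\class{C}} = \class{F}$; in particular every flat sheaf lies in $\class{V}$. Recall also that $\class{W}$ and $\class{V}$ are thick, being the classes of trivial objects of abelian model structures.

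Next I would show $\class{W} = \{M : \mathrm{id}_{\mathbb{X}}(M) < \infty\}$. For $\supseteq$, induct on $\mathrm{id}_{\mathbb{X}}(M)$: if it is $0$ then $M$ is injective, hence trivial; otherwise choose a short exact sequence $0 \to M \to I \to M' \to 0$ with $I$ injective and $\mathrm{id}_{\mathbb{X}}(M') < \mathrm{id}_{\mathbb{X}}(M)$, and conclude $M \in \class{W}$ from $I, M' \in \class{W}$ by thickness. For $\subseteq$, let $M \in \class{W}$ and form an injective coresolution; by Lemma~\ref{lemma-Gorenstein scheme}(1) its $d$-th cosyzygy $N$ is Gorenstein injective. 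Cutting the coresolution into short exact sequences $0 \to M^{i} \to I^{i} \to M^{i+1} \to 0$ with $M^{0} = M$, $M^{d} = N$ and each $I^{i}$ injective (hence in $\class{W}$), repeated use of thickness yields $N \in \class{W}$, so $N \in \class{W} \cap \class{GI}$, which we have identified with the injectives. Thus $N$ is injective and $\mathrm{id}_{\mathbb{X}}(M) \leq d < \infty$. Replacing throughout ``injective'' by ``flat'', $\class{W}$ by $\class{V}$, Lemma~\ref{lemma-Gorenstein scheme}(1) by Lemma~\ref{lemma-Gorenstein scheme}(2), and $\class{W} \cap \class{GI}$ by $\class{GF} \cap \class{V} = \class{F}$, the identical argument gives $\class{V} = \{M : \mathrm{fd}_{\mathbb{X}}(M) < \infty\}$.

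Finally, the preceding lemma says that for a sheaf $M$ the conditions $\mathrm{id}_{\mathbb{X}}(M) < \infty$, $\mathrm{fd}_{\mathbb{X}}(M) < \infty$ and $\mathrm{pd}_{\mathbb{X}}(M) < \infty$ are equivalent, so $\class{W} = \class{V}$ and this common class $\mathcal{W}$ is the one described in the statement. I do not anticipate a real obstacle here: the only things needing care are the bookkeeping of Hovey's correspondence (to recognize $\class{W} \cap \class{GI}$ and $\class{GF} \cap \class{V}$ as the injective, respectively flat, sheaves) and the routine observation that a (co)resolution may be chopped into short exact sequences to which thickness of $\class{W}$ (respectively $\class{V}$) applies. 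The genuine input is Lemma~\ref{lemma-Gorenstein scheme}, which is exactly where the finiteness of the Krull dimension enters: it forces a high enough (co)syzygy to already be Gorenstein (co)injective (respectively Gorenstein flat), and hence — once it is seen to be trivial — actually injective (respectively flat).
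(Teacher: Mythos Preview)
Your proof is correct, and both you and the paper use Lemma~\ref{lemma-Gorenstein scheme} as the essential input, but the arguments are organized differently. The paper works with the Ext-orthogonal descriptions $\class{W}=\leftperp{\class{GI}}$ and $\class{V}\cap\class{C}=\rightperp{\class{GF}}$: for the injective model it shows that $T\in\leftperp{\class{GI}}$ forces $\mathrm{pd}_{\mathbb{X}}(T)\leq d$ by coresolving an \emph{arbitrary} target $L$ and dimension shifting along $\Ext^{d+i}_{\mathbb{X}}(T,L)\cong\Ext^{i}_{\mathbb{X}}(T,G)$ with $G$ the $d$th cosyzygy (Gorenstein injective); for the flat model it shows directly that $\rightperp{\class{GF}}=\class{W}\cap\class{C}$ and notes $\class{GF}\cap\class{W}=\class{F}$, then appeals (implicitly) to uniqueness of the trivial class in a Hovey triple. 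By contrast, you coresolve (respectively resolve) the object $M$ itself, use thickness of the trivial class to push trivial membership down to the $d$th (co)syzygy, and then recognize that (co)syzygy as lying in the core $\class{W}\cap\class{GI}=$ injectives (respectively $\class{GF}\cap\class{V}=\class{F}$). Your route is more model-categorical in flavor and pleasantly symmetric between the two cases; the paper's route is more explicitly homological and yields, along the way, the concrete identification $\rightperp{\class{GF}}=\class{W}\cap\class{C}$, which is of some independent interest.
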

\begin{proof}
To see that $\mathcal W$ is the class of trivial objects in the Gorenstein injective model $\mathfrak{M}^{inj}_{\textnormal{G}}$, it suffices to show that $\leftperp{\class{GI}}=\class{W}$. Since every $M\in \class{W}$ has finite injective dimension an easy dimension shifting argument  gives immediately that $M\in \leftperp{\class{GI}}$. Conversely, assume that $T\in \leftperp{\class{GI}}$ and let $L$ be any sheaf. We will prove that $\Ext^{d+i}_{\mathbb{X}}(T,L)=0$, where $d=\mathrm{dim}(\mathbb{X})$, for all $i\geq 1$.  By Lemma \ref{lemma-Gorenstein scheme}, the $dth$ injective cosyzygy of $L$, say $G$, is Gorenstein injective, therefore $\Ext_{\mathbb{X}}^{d+i}(T,L)=\Ext_{\mathbb{X}}^{i}(T,G)=0$. Thus $\mathrm{pd}_{\mathbb{X}}(T)\leq d<\infty $, and so $T\in \class{W}$.

Let $(\class{F},\class{C})$ denote the flat cotorsion pair. To see that $\mathcal W$ is the class of trivial objects in the Gorenstein flat model $\mathfrak{M}^{flat}_{\textnormal{G}}$, it suffices to show that $\rightperp{\class{GF}}=\class{W}\cap \class{C}$ (notice that it is immediate that $\class{GF}\cap\class{W}=\class{F}$). Let us consider $H\in \class{W}\cap \class{C}$ and let $G$ be a Gorenstein flat sheaf, thus $G=Z_0 F$, where $F$ is an F-totally acyclic complex of flat sheaves. Since $H\in \class{W}$, it has finite injective dimension, say $\mathrm{id}_{\mathbb{X}}(H)\leq n$. Since $H$ is cotorsion, we can apply dimension shifting to get $\Ext_{\mathbb{X}}^{1}(G,H)=\Ext_{\mathbb{X}}^{n+1}(Z_{-n}F,H)=0 $, i.e. $H\in \rightperp{\class{GF}}$. Conversely, assume $D\in\rightperp{\class{GF}}$ (so in particular $D$ is cotorsion) and let $L$ be any sheaf. As in the previous case, we will prove that $\Ext^{d+i}_{\mathbb{X}}(L,D)=0$, where $d=\mathrm{dim}(\mathbb{X})$, for all $i\geq 1$. By Lemma \ref{lemma-Gorenstein scheme}, the $dth$ flat syzygy of $L$, say $K$, is Gorenstein flat, therefore $\Ext_{\mathbb{X}}^{d+i}(L,D)=\Ext_{\mathbb{X}}^{i}(K,D)=0$. Then, $\mathrm{id}_{\mathbb{X}}(D)\leq d<\infty $, and so $D\in \class{W}$.
\end{proof}

\begin{remark}
A close look at what has been shown above reveals that the Krull dimension $d$ is the common upper bound to $\mathrm{id}_{\mathbb{X}}(M)$, $\mathrm{fd}_{\mathbb{X}}(M)$, and $\mathrm{pd}_{\mathbb{X}}(M)$, whenever any of these dimensions is indeed finite. For $\mathrm{id}_{\mathbb{X}}(M)$ and $\mathrm{fd}_{\mathbb{X}}(M)$, this follows from the corresponding fact for $d$-Gorenstein rings, and for $\mathrm{pd}_{\mathbb{X}}(M)$ it follows from the first paragraph of the proof of the above Proposition. We have shown that $\qcox$ is a \emph{Gorenstein category} in the sense of \cite[Defs.~2.15 and~2.18]{enochs-estrada-grozas} whenever $\mathbb{X}$ is a semiseparated Gorenstein scheme of finite Krull dimension. This extends \cite[Theorem 3.11]{enochs-estrada-grozas} where this is proved for Gorenstein projective schemes of finite Krull dimension.
%Let $\mathrm{FID}(\mathbb{X})$ the supremum of $\mathrm{id}_{\mathbb{X}}(M)$ for all sheaf $M$ with $\mathrm{id}_{\mathbb{X}}(M)<\infty$. Similarly, we define $\mathrm{FPD}(\mathbb{X})$. It is then clear that $\mathrm{FID}(\mathbb{X})\leq d$. Now, let $M$ be a sheaf such that $\mathrm{pd}_{\mathbb{X}}(M)<\infty$. The proof of the previous lemma and \cite[Theorem 9.1.10]{enochs-jenda-book} give us that $\mathrm{pd}_{\mathcal O_{\mathbb{X}}(U_i)}(M(U_i))\leq d<\infty$. But then (the argument of the proof of) \cite[Lemma 6.9]{asad-salarian} gives that $\mathrm{pd}_{\mathbb{X}}(M)\leq m+d$, i.e. $\mathrm{FPD}(\mathbb{X})\leq m+d$. Finally, we have already pointed out that $\qcox$ has a flat generator which, by the previous, has finite projective dimension. Altogether gives that, $\qcox$ is a \emph{Gorenstein category} in the sense of \cite[Definition 2.18]{enochs-estrada-grozas} for a Gorenstein schemes of finite Krull dimension. This extends \cite[Theorem 3.11]{enochs-estrada-grozas} where this is proved for Gorenstein projective schemes of finite Krull dimension.

\end{remark}

Since the two model structures share the same class of trivial objects they are isomorphic representations  of the same homotopy (localization) category. That is,
$\textnormal{Ho}(\mathfrak{M}^{flat}_{\textnormal{G}}) \cong \textnormal{Ho}(\mathfrak{M}^{inj}_{\textnormal{G}})$.
Moreover, the loop $\Omega$ and suspension $\Sigma$ functors agree in the two different model structures, and this is shown directly in~\cite[Appendix~A]{gillespie-canonical resolutions}. So we have proved the following theorem, expressing balance between the (mock) projective Tate cohomology and the injective Tate cohomology.

%Since the two model structures share the same class of trivial objects they also share the same class $\class{W}e$ of weak equivalences. That is,$\mathfrak{M}^{inj}_{\textnormal{G}}$ and $\mathfrak{M}^{inj}_{\textnormal{G}}$ provide two different (isomorphic) representations of the localization with respect to $\class{W}e$: $$\textnormal{Ho}(\mathfrak{M}^{flat}_{\textnormal{G}}) \cong \qcox[\class{W}e^{-1}] \cong \textnormal{Ho}(\mathfrak{M}^{inj}_{\textnormal{G}})$$ Moreover, the loop $\Omega$ and suspension $\Sigma$ functors agree in the two different model structure, and this is shown directly in~\cite[Appendix~A]{gillespie-canonical resolutions}. So we have proved the following theorem, expressing balance between the (mock) projective Tate cohomology and the injective Tate cohomology.

\begin{theorem}\label{them-mock-tate-inj-tate}
Let $\mathbb X$ be a semiseparated Gorenstein scheme of finite Krull dimension, and $A,B \in \qcox$. Then the (mock) projective Tate cohomolgoy and the injective Tate cohomology agree:
$${}_\textnormal{p}\widehat{\Ext}^n_{\mathbb{X}}(A,B) \cong {}_\textnormal{i}\widehat{\Ext}^n_{\mathbb{X}}(A,B).$$
\end{theorem}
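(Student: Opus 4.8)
The plan is to leverage the proposition just above, which shows that, on $\qcox$, the Gorenstein flat model structure $\mathfrak{M}^{flat}_{\textnormal{G}} = (\class{GF}, \class{V}, \class{C})$ and the Gorenstein injective model structure $\mathfrak{M}^{inj}_{\textnormal{G}} = (All, \class{W}, \class{GI})$ have \emph{exactly the same} class $\class{W}$ of trivial objects --- the sheaves of finite injective (equivalently flat, equivalently projective) dimension. The first step is to recall that in a hereditary abelian model structure the weak equivalences are determined solely by the thick class of trivial objects: a morphism $f$ is a weak equivalence if and only if both $\ker f$ and $\cok f$ lie in that class; see, e.g.,~\cite{gillespie-exact model structures} and~\cite{gillespie-hereditary-abelian-models}. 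Consequently the two model structures have literally the same weak equivalences, so $\textnormal{Ho}(\mathfrak{M}^{flat}_{\textnormal{G}})$ and $\textnormal{Ho}(\mathfrak{M}^{inj}_{\textnormal{G}})$ are one and the same localization $\qcox[\class{W}^{-1}]$, with each sheaf corresponding to itself.

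The second step is to match up the triangulated structures and hence the Tate cohomology. By~\cite{gillespie-canonical resolutions}, the generalized Tate cohomology of a hereditary abelian model structure $\mathfrak{M}$ is, by construction, the bifunctor $\Ext^n_{\textnormal{Ho}(\mathfrak{M})}(A,B) = \textnormal{Ho}(\mathfrak{M})(A,\Sigma^n B) \cong \textnormal{Ho}(\mathfrak{M})(\Omega^n A, B)$, defined purely through the loop and suspension functors of $\textnormal{Ho}(\mathfrak{M})$. By~\cite[Appendix~A]{gillespie-canonical resolutions}, two hereditary abelian model structures with a common class of trivial objects also share the same $\Omega$ and $\Sigma$; combined with the first step this gives a natural isomorphism $\Ext^n_{\textnormal{Ho}(\mathfrak{M}^{flat}_{\textnormal{G}})}(A,B) \cong \Ext^n_{\textnormal{Ho}(\mathfrak{M}^{inj}_{\textnormal{G}})}(A,B)$ for every $n \in \Z$.

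The third step is to chain the identifications. By definition ${}_\textnormal{p}\widehat{\Ext}^n_{\mathbb{X}}(A,B) = \Ext^n_{\textnormal{Ho}(\mathfrak{M}^{flat}_{\textnormal{G}})}(A,B)$, while Proposition~\ref{prop-Tate-G} --- which applies since a semiseparated Gorenstein scheme of finite Krull dimension is in particular semiseparated Noetherian and admits a dualizing complex --- gives $\Ext^n_{\textnormal{Ho}(\mathfrak{M}^{inj}_{\textnormal{G}})}(A,B) \cong {}_\textnormal{i}\widehat{\Ext}^n_{\mathbb{X}}(A,B)$. Composing these isomorphisms yields ${}_\textnormal{p}\widehat{\Ext}^n_{\mathbb{X}}(A,B) \cong {}_\textnormal{i}\widehat{\Ext}^n_{\mathbb{X}}(A,B)$, naturally in both variables.

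The step I expect to need the most care is the first: one must be sure that a common class of trivial objects makes the two model structures present not merely equivalent homotopy categories but literally the same triangulated category --- same weak equivalences, same loop and suspension --- so that the Tate cohomology groups, which are morphism sets in that triangulated category, can be compared on the nose. Once this is secured by the preceding proposition and~\cite[Appendix~A]{gillespie-canonical resolutions}, the rest is a routine unwinding of the definitions of~\cite{gillespie-canonical resolutions} together with Proposition~\ref{prop-Tate-G}.
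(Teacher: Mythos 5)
Your proposal is correct and follows essentially the same route as the paper: the paper likewise argues that since the two model structures share the class of trivial objects they present the same localization category, invokes~\cite[Appendix~A]{gillespie-canonical resolutions} for the agreement of $\Omega$ and $\Sigma$, and then reads off the isomorphism from the definitions of the two Tate cohomologies. Your version merely spells out the intermediate identification through $\Ext^n_{\textnormal{Ho}(\mathfrak{M})}$ and Proposition~\ref{prop-Tate-G} a bit more explicitly, which is fine.
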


\providecommand{\bysame}{\leavevmode\hbox to3em{\hrulefill}\thinspace}
\providecommand{\MR}{\relax\ifhmode\unskip\space\fi MR }
% \MRhref is called by the amsart/book/proc definition of \MR.
\providecommand{\MRhref}[2]{%
  \href{http://www.ams.org/mathscinet-getitem?mr=#1}{#2}
}
\providecommand{\href}[2]{#2}

\end{document}